\newtheorem{theorem}{Theorem}[section]
\newtheorem{lemma}[theorem]{Lemma}
\newtheorem{proposition}[theorem]{Proposition}
\newtheorem{corollary}[theorem]{Corollary}
\newtheorem{remark}[theorem]{Remark} 
\newtheorem{example}[theorem]{Example}
\newtheorem{question}[theorem]{Question}
\newtheorem{problem}[theorem]{Problem}
\newtheorem{definition}[theorem]{Definition}
\newcommand{\R}{\mathbb{R}}
\newcommand{\RR}{\mathcal{R}}
\newcommand{\C}{\mathbb{C}}
\title{On a magneto-spectral invariant on finite graphs}
\author[Hu]{Chunyang Hu}
\address{School of Mathematical Sciences, University of Science and Technology of China, Hefei, China}
\email{chunyanghu@mail.ustc.edu.cn}
\author[Hua]{Bobo Hua}
\address{School of Mathematical Sciences, LMNS, Fudan University, Shanghai, China}
\email{bobohua@fudan.edu.cn}
\author[Kamtue]{Supanat Kamtue}
\address{Department of Mathematics
and Computer Science, Faculty of Science, Chulalongkorn University, Bangkok, Thailand}
\email{supanat.k@chula.ac.th}
\author[Liu]{Shiping Liu}
\address{School of Mathematical Sciences, University of Science and Technology of China, Hefei, China}
\email{spliu@ustc.edu.cn}
\author[M\"unch]{Florentin M\"unch}
\address{Institute of Mathematics, Leipzig University, Leipzig, Germany}
\email{cfmuench@gmail.com}
\author[Peyerimhoff]{Norbert Peyerimhoff}
\address{Department of Mathematical Sciences, Durham University, Durham, UK}
\email{norbert.peyerimhoff@durham.ac.uk}
\begin{document}

\begin{abstract} In this paper, we introduce a magneto-spectral invariant for finite graphs. This invariant vanishes on trees and is maximized by complete graphs. We compute this invariant for cycles, complete graphs, wheel graphs, hypercubes, complete bipartite graphs and suspensions of trees and derive various lower and upper bounds. In particular, we provide a sharp upper bound for regular bipartite graphs and derive a direct relation between the class of graphs assuming this upper bound and the class of unit weighing matrices, which are generalizations of complex Hadamard matrices. Moreover, this class of bipartite graphs has non-negative magnetic Bakry-\'Emery curvature and is preserved under both the Cartesian product and a partial tensor product for bipartite graphs. The study of our invariant for certain pairs of cospectral graphs indicates also that this invariant allows us to distinguish between them. Finally, we discuss the behaviour of this invariant under various graph operations and investigate relations to the spectral gap. 
\end{abstract}

\maketitle

\tableofcontents

\section{Introduction}

In this paper, we introduce a new magneto-spectral invariant (see Definition \ref{def:magspecheight} below) and discuss various of its properties.  
We begin by providing the notational background.

Let $G=(V,E)$ be a \emph{simple} (that is, without loops and multiple edges) finite graph. For adjacent vertices $x, y \in V$ we use the notation $x \sim y$ and we call $x,y \in V$ \emph{neighbours}. For $x \in V$, the \emph{vertex degree} $d_x$ is defined as the number of neighbours of $x$. The \emph{minimal} and \emph{maximal vertex degree} of $G$ is defined by
$$ d_{\rm{min}}(G) = \min_{x \in V} d_x, \quad d_{\rm{max}}(G) = \max_{x \in V} d_x. $$
The set of oriented edges $E^{or}(G)$ consists of ordered pairs $(x,y)$ with $x \sim y$. A \emph{magnetic potential} is a map $\sigma: E^{or}(G) \to S^1$ with $\sigma(y,x) = \sigma(x,y)^{-1}$. Such a potential induces, for every walk $\gamma: x_1 \sim x_2 \sim \cdots \sim x_k$, a potential of $\gamma$, given by
$$ \sigma(\gamma) = \sigma(x_1,x_2) \sigma(x_2,x_3) \cdots \sigma(x_{k-1},x_k). $$

In the case that $\sigma$ assumes only the values $\pm 1$, we also refer to it as a \emph{signature}.
Let $C(G,\C) = C(V,\C) = \{f: V \to \C\}$ the vector space of $\C$-valued functions on the vertices of $G$.
The \emph{magnetic Laplacian} associated to a magnetic potential $\sigma$ is a linear operator $\Delta^\sigma = \Delta^\sigma_G: C(V,\C) \to C(V,\C)$, given by
\begin{equation} \label{eq:magnLap} 
\Delta^\sigma f(x) = \sum_{y \sim x} (f(x) - \sigma(x,y)f(y)). 
\end{equation}

In the case of $\sigma \equiv 1$, we recover the standard non-normalized Laplacian $\Delta=\Delta_G$. The space $C(V,\C)$ has the following inner product: Let $f,g \in C(V,\C)$. Then
$$ \langle f,g \rangle = \sum_{x \in V} f(x) \overline{g(x)}. $$
$\Delta^\sigma$ is symmetric and has non-negative real spectrum. Its eigenvalues are denoted by 
$$ 0 \le \lambda_{\min}(\Delta_G^\sigma) = \lambda_1^\sigma(G) \le \lambda_2^\sigma(G) \le \cdots \le \lambda_n^\sigma(G) = \lambda_{\max}(\Delta_G^\sigma) $$ 
with $n=|V|$, and the variational characterization of the smallest eigenvalue $\lambda_1^\sigma(G)$ is given by
$$ \lambda_1^\sigma(G) = \inf_{f \neq 0} \RR^\sigma(f) $$
with
$$ \RR^\sigma(f) = \RR^\sigma_G(f) = \frac{\sum_{\{x,y\} \in E} |f(x)-\sigma(x,y)f(y)|^2}{\sum_{x \in V} |f(x)|^2}. $$
Similarly, the eigenvalues of the standard Laplacian $\Delta=\Delta_G$ on $G$ are denoted by
$$ 0 = \lambda_1(G) \le \lambda_2(G) \le \cdots \le \lambda_n(G)=\lambda_{\max}(\Delta_G) $$
and the corresponding Rayleigh quotient is denoted by $\RR = \RR_G$.
The \emph{spectral gap} of a graph $G$ is defined as $\lambda_2(G)$.

An important transformation of magnetic potentials is given in the following definition.

\begin{definition} Let $G=(V,E)$ be a finite graph and $\sigma: E^{or}(G) \to S^1$. For any function $\tau: V \to S^1$, we define the associated \emph{gauge transformation} $\sigma^\tau$ of $\sigma$ as
$$ \sigma^\tau(x,y) = \tau(x)^{-1} \sigma(x,y) \tau(y). $$
The function $\tau$ is called a \emph{gauge function}. We say that two magnetic potentials $\sigma_1,\sigma_2$ are \emph{gauge equivalent}, if they are related via a suitable gauge function. 
\end{definition}

Note that magnetic Laplacians with gauge equivalent magnetic potentials are unitarily equivalent and have therefore the same spectrum. In particular, if $f$ is a $\Delta^{\sigma^\tau}$-eigenfunction to the eigenvalue $\lambda$, then $\tau f$ is a $\Delta^\sigma$-eigenfunction to the same eigenvalue.
Note also that any magnetic potential can be gauged to be trivial on a given spanning forest and that two different potentials which are both trivial on such a spanning forest cannot be gauge euqivalent.

In the case of signatures, we also refer to gauge functions $\tau: V \to \{-1,1\}$ as \emph{switching functions} and refer to two signatures $\sigma_1,\sigma_2$ which are related via a switching function as being \emph{switching equivalent}. 
Signatures which are switching equivalent to the trivial signature are called \emph{balanced}. We denote the trivial signature by $\sigma_0 \equiv 1$ in this paper. The signature assuming only values $-1$ is referred to as the \emph{anti-balanced} signature. It is denoted by $\overline{\sigma} \equiv -1$. We refer to magnetic Laplacians corresponding to signatures as \emph{signed Laplacians}.

Signed graphs and signed Laplacians have been studied extensively (see, e.g. \cite{AL20,H57,Z82,Z98,Z13} and references therein). On the other hand, there is a vast literature on discrete magnetic Laplacians: a small collection of some papers on this topic is \cite{ABG25,B13,BGKLM20,CdV13,DM06,FCLP18,G14,HSh01,KS23,LLPP15,Shu94,Su94}. 

\medskip

Now we introduce the following magneto-spectral invariant of finite graphs.

\begin{definition}[Magneto-spectral height] \label{def:magspecheight} Let $G=(V,E)$ be a finite graph. The \emph{magneto-spectral height} of $G$ is defined to be
\begin{equation} \label{eq:magspecheight} 
\nu(G) := \sup_{\sigma} \lambda_1^\sigma(G), 
\end{equation}
where the supremum is taken over all magnetic potentials $\sigma: E^{or}(G) \to S^1$. 

A magnetic potential is called \emph{maximal}, if $\lambda_1^\sigma(G)$ agrees with $\nu(G)$.
\end{definition}

Henceforth, all our graphs $G$ are assumed to be finite, unless stated otherwise.

\subsection{Structure of the paper} 

In Section \ref{sec:examples}, we first compute the magneto-spectral height for various examples, namely, we have $\nu(T)=0$ for trees (Lemma \ref{lem:treenu}), $\nu(C_n)=2-2\cos(\pi/n)$ for cycles $C_n$ (Example \ref{ex:cycle}), $\nu(K_n)=n-2$ for complete graphs $K_n$ (Example \ref{ex:completegraph}), $\nu(W_n) = 3-2\cos(\pi/n)$ for wheel graphs $W_n$ (Proposition \ref{prop:wheelnu}), and $\nu(Q^d) = \nu(K_{d,d}) = d-\sqrt{d}$ for hypercubes $Q^d$ (Proposition \ref{prop:nuhypercube}) and complete bipartite graphs $K_{d,d}$ (Example \ref{ex:complbipart}). For many of these examples, we describe a maximal potential explicitly, and we find also that maximal potentials are often unique up to gauge equivalence. Subsection \ref{subsec:cospectral} is concerned with pairs of cospectral graphs. It is known (see \cite{ZLY09}) that $W_6$ is not determined (up to isomorphisms) by its Laplace spectrum and that there exists a different cospectral graph $\widehat G$. We show that the magneto-spectral height distinguishes $W_6$ and $\widehat G$ (see Example \ref{ex:wheelcospec}). Since the magneto-spectral height vanishes for trees, it does not distinguish between cospectral pairs of trees (see Example \ref{ex:cospectrees}). We end this subsection with a discussion of the magneto-spectral height for particular pairs of cospectral graphs constructed by Sunada's method and via Seidel switching. In Subsection \ref{subsec:counterex}, we discuss some counterexamples for potential conjectures about the magneto-spectral height which may arise naturally.

Section \ref{sec:genresults} is concerned with various upper and lower bounds for the magneto-spectral height. While the derivation of lower bounds is often very challenging, upper bounds can be derived more easily. We present combinatorial upper bounds (Proposition \ref{prop:combupbd}), 
the sharp upper bound $d-\sqrt{d}$ for $d$-regular bipartite graphs (Proposition \ref{prop:regbipartuppbd}) and show that the class of graphs attaining this upper bound is closed under Cartesian products (Proposition \ref{prop:maxregbipartite}),
upper bounds in terms of topological/spectral data of induced subgraphs (Theorem \ref{thm:outdegree_upper_bound} and Corollary \ref{cor:nudmin}), upper and lower Ramanujan type bounds (Propositions \ref{prop:alonbopp} and \ref{prop:MSS}), and lower bounds in terms of diameter and curvature (Propositions \ref{prop:nudiamvol} and \ref{prop:nudiam}). 
In the final Subsection \ref{sec:regbipuppbd}, we investigate the class of $d$-regular bipartite graphs $G$ satisfying $\nu(G)=d-\sqrt{d}$ further and discover a direct connection with the class of unit weighing matrices (Theorem \ref{sec:regbipuppbd}). Unit weighing matrices are a generalization of complex Hadamard matrices. Via this connection we exhibit various other graphs in this class and conclude closedness of this graph class under a new graph product associated to the tensor product of the corresponding unit weighing matrices (Remark \ref{rem:UWnd}). We also show that the magnetic Bakry-\'Emery curvature for maximal potentials of all graphs in this class is non-negative (Theorem \ref{thm:Chunyang}).

Various graph constructions are discussed in Section \ref{sec:graphconstructions}, namely, the addition of internal edges (Lemma \ref{lem:addedge}), combining graphs by bridges (Lemma \ref{lem:GHbridge}), splitting vertices and edges (Lemmas \ref{lem:splitvertex} and \ref{lem:splitedge}) and taking Cartesian products (Lemma \ref{lem:cartprod}). The longer final Subsection \ref{sec:suspensions} is concerned with graph suspensions. Theorem \ref{thm:suspgraph} in this subsection states that the magneto-spectral height does not increase by more than one under a graph suspension and that we have $\nu(\tilde T) = 1$ for the suspension $\tilde T$ of any tree $T$ with at least two vertices.
Suspensions of star graphs are considered in Corollary \ref{cor:suspstargraph}, which leads to the somewhat surprising fact that there are finite graphs with uncountably many non gauge equivalent maximal potentials. Examples \ref{ex:susp1} and \ref{ex:susp2} confirm the observation that the relation $\nu(\tilde G) = \nu(G)+1$ holds for suspensions $\tilde G$ of many finite graphs. However, there are also counterexamples for this relation (see Remark \ref{remark:suspension}). Finally, Proposition \ref{prop:nususpsm1} classifies all graphs $G$ whose suspension $\tilde G$ satisfies $\nu(\tilde G)<1$.

In Section \ref{sec:specgap} we briefly discuss potential relations between our magneto-spectral height and the spectral gap, which is another widely investigated and fundamental spectral invariant. The example of hypercubes and Example \ref{ex:cycalmcomplete} show that $\nu(G)$ cannot be bounded above by the spectral gap (up to any multiplicative scaling). On the positive side, the inequality \eqref{eq:nulambda2} in \cite{Sa23} provides evidence that $\nu(G)$ may be bounded below by the spectral gap (up to a multiplicative scaling).

Section \ref{sec:openprobs} presents some open problems about the magneto-spectral height which may inspire further work on this topic.

At the end of this paper, we discuss in Appendix A that the magneto-spectral height has an analogue for Riemannian manifolds.

\subsection{Context and outlook}

It is natural to ask what the magneto-spectral height is actually measuring. For a connected graph, this invariant vanishes if and only if the graph does not contain any cycles. Hence this invariant could be viewed as a measure of the complexity of the cycle structure of a graph, where larger cycles seem to contribute less to this complexity than smaller ones. 
On the other hand, we will see that graphs $G$ with leaves have very small magneto-spectral heights $\nu(G) < 1$.

Magnetic potentials on graphs are special cases of graph connections. A \emph{connection} on a graph $G=(V,E)$ is a map $\sigma: E^{or}(G) \to O(n)$ or $\sigma: E^{or}(G) \to U(n)$ satisfying $\sigma(y,x) = \sigma(x,y)^{-1}$ for all $(x,y) \in E^{or}(G)$. A connection can be viewed as a discrete analogue of a parallel transport in Riemannian Geometry. A connection $\sigma$ gives rise to a connection Laplacian $\Delta^\sigma$, defined on function $f: V \to \R^n$ or $f: V \to \C^n$, with the same definition as in \eqref{eq:magnLap}.
In the case $O(1) = \{-1,1\}$, the associated connection Laplacians agree with the signed Laplacians and in the case $U(1) = S^1$, the associated connection Laplacians agree with the magnetic Laplacians.
The eigenvalues of connection Laplacians are again all real-valued and non-negative, and we denote the smallest eigenvalue again by $\lambda_1^\sigma(G)$.
Let $\Gamma =O(n)$ or $\Gamma= U(n)$.
Our magneto-spectral height has a natural generalization to connection Laplacians via
$$ \nu_\Gamma(G) = \sup_\sigma \lambda_1^\sigma(G), $$
where $\sigma$ runs through all connections $\sigma: E^{or}(G) \to \Gamma$. It is interesting to ask whether the $\nu$-invariants of higher dimensional connection Laplacians carry further interesting information about the graph.

Another modification of the $\nu$-invariant can be introduced for cyclic subgroups of $S^1$. If we refer to the set of all $k$-th roots of unity as 
$$
S_k := \{ {\xi_k}^j: j \in \{0,1,\dots,k-1\} \} \subset S^1, \quad \text{with $\xi_k = e^{2 \pi i/k}$,} 
$$
we can consider the following invariant:
$$ \nu^k(G) = \sup_{\sigma: E^{or}(G) \to S_k} \min\left\{
\lambda_1^\sigma(G),
\lambda_1^{\sigma^2}(G),
\dots,
\lambda_1^{\sigma^{k-1}}(G) \right\},
$$
where $\sigma^j(x,y) = (\sigma(x,y))^j$ for all oriented edges $(x,y) \in E^{or}(G)$.

In the special case $k=2$, $\nu^2(G)$ agrees with the supremum in \eqref{eq:magspecheight}, where the supremum runs only over the signatures instead of all magnetic potentials. Any $\sigma: E^{or}(G) \to S_k$ gives rise to a cyclic $k$-lift $\pi: \widehat G \to G$ as follows: Every directed edge $(x,y) \in E^{or}(G)$ corresponds to a particular $k$-matching between the fibers $\pi^{-1}(x) = \{x_1,\dots,x_k\} \in \widehat V$ and $\pi^{-1}(y) = \{y_1,\dots,y_k\} \in \widehat V$ in the lift $\widehat G=(\widehat V,\widehat E)$, which is determined by $\sigma(x,y) = \xi_k^\ell \in S_k$ as follows $x_j \sim_{\widehat G} y_{j+\ell}$ for $j \in \{0,1,\dots,k-1\}$, where the indices are taken modulo $k$ (an example of this is illustrated in Figure \ref{fig:cyclic-lift}).

The spectrum of the standard Laplacian $\Delta_{\widehat G}$ on $\widehat G$ is the multiset union of the spectra of the magnetic Laplacians $\Delta^{\sigma^j}$ on $G$, for $j=0,1,\dots,k-1$. For further details about cyclic lifts and this spectral relation, we refer the readers to \cite{MSS15}, \cite{LPV20} and \cite{ChV17}. These papers are concerned with the question whether bipartite Ramanujan graphs admit cyclic $k$-lifts which are again Ramanujan. In fact, this is true for $k=2,3,4$, as shown in these papers. For $k \ge 5$, bipartite Ramanujan graphs admits also always general $k$-lifts which are again Ramanujan (see \cite{HPS18}), but it is still an open problem, whether this also holds for the smaller class of cyclic $k$-lifts.    

\begin{figure}[h]
\begin{center}
\begin{tikzpicture}[thick,scale=1]
\filldraw [black] (0,0) circle (2pt);
\filldraw [black] (3,0) circle (2pt);
\filldraw [black] (0,2) circle (2pt);
\filldraw [black] (0,3) circle (2pt);
\filldraw [black] (0,4) circle (2pt);
\filldraw [black] (0,5) circle (2pt);
\filldraw [black] (0,6) circle (2pt);
\filldraw [black] (3,2) circle (2pt);
\filldraw [black] (3,3) circle (2pt);
\filldraw [black] (3,4) circle (2pt);
\filldraw [black] (3,5) circle (2pt);
\filldraw [black] (3,6) circle (2pt);
\draw (0,0) node[label=below:$x$]{};
\draw (3,0) node[label=below:$y$]{};
\draw (0,2) node[label=left:$x_1$]{};
\draw (0,3) node[label=left:$x_2$]{};
\draw (0,4) node[label=left:$x_3$]{};
\draw (0,5) node[label=left:$x_4$]{};
\draw (0,6) node[label=left:$x_5$]{};
\draw (3,2) node[label=right:$y_1$]{};
\draw (3,3) node[label=right:$y_2$]{};
\draw (3,4) node[label=right:$y_3$]{};
\draw (3,5) node[label=right:$y_4$]{};
\draw (3,6) node[label=right:$y_5$]{};
\draw (1.5,-1) node[label=below:{\LARGE$\sigma(x,y)={\xi_5}^2$}]{};
\draw (0,0) -- (3,0);
\draw (0,2) -- (3,4);
\draw (0,3) -- (3,5);
\draw (0,4) -- (3,6);
\draw (0,5) -- (3,2);
\draw (0,6) -- (3,3);
\end{tikzpicture}
\end{center}
\caption{Relation between a directed edge $(x,y) \in E^{or}(G)$ with $\sigma(x,y) = {\xi_5}^2$ and a matching between the fibers $\pi^{-1}(x)$ and $\pi^{-1}(y)$ in the $5$-lift $\widehat G$. \label{fig:cyclic-lift}}
\end{figure}

\FloatBarrier

Our magneto-spectral invariant can also be considered in relation to maximal abelian covers. For more information and new results about maximal abelian covers, we refer to the recent articles \cite{ABG25,LMST25}. By Floquet-Bloch Theory, the $L^2$-spectrum of the standard Laplacian $\Delta_{\tilde G}$ on the maximal abelian cover $G^{ab}$ of a finite connected graph $G=(V,E)$ is given by the union
\begin{equation} \label{eq:floquet} 
{\rm{spec}}(\Delta_{\tilde G}) = \bigcup_{j=1}^{|V|} \left( \bigcup_{\sigma: E^{or}(G) \to S^1} \lambda_j^\sigma(G) \right), \end{equation}
where the inner union represents either an interval (due to continuity of $\sigma \mapsto \lambda_j^\sigma(G)$) or a single value (if $\sigma \to \lambda_j^\sigma(G)$ is constant). In the first case, the contribution to the spectrum is absolutely continuous, and in the second case, the single value represents a genuine eigenvalue, which is of infinite multiplicity if $G$ is not a tree. Our magneto-spectral height $\nu(G)$ represents the upper end of the first inner union ($j=1$) of \eqref{eq:floquet}, that is,
$$ \bigcup_{\sigma: E^{or}(G) \to S^1} \lambda_1^\sigma(G) = [0,\nu(G)]. $$
Note also that there is a natural generalization of the magneto-spectral height for Schr\"odinger operators $\Delta_G + q$ with electric potential $q: V \to \R$ on multi-graphs $G=(V,E)$. For example, we have for the Laplacian on the multi-graph $G$ with one single vertex and one loop
$$ \nu(G) =  \sup_{a \in S^1} 2-a-\bar a = 4. $$
In this article, however, we will restrict to simple graphs, unless stated otherwise.

\section{Examples of graphs} \label{sec:examples}

We start this section about examples with the class of graphs whose magneto-spectral height vanishes.

\begin{lemma} \label{lem:treenu}
Any magnetic potential can be gauged away if and only if $G=(V,E)$ is a forest. Therefore,
we have $\nu(G)=0$ if and only if at least one connected component of $G$ is a tree.
\end{lemma}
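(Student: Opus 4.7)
The lemma contains two assertions, and the plan is to prove them in the order given: first the gauge-theoretic characterization, and then the spectral consequence, bridged by the sublemma that $\lambda_1^\sigma(G) = 0$ is equivalent to $\sigma$ being gauge-equivalent to the trivial signature.

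For the gauge claim, the forward implication is immediate from the fact already recorded in the excerpt that any magnetic potential can be gauged to be trivial on a prescribed spanning forest: if $G$ is itself a forest then $G$ is its own spanning forest, so every $\sigma$ can be gauged to $\sigma_0$ on all of $E$. For the converse, I argue contrapositively. I define the \emph{holonomy} of $\sigma$ along a closed walk $x_0 \sim x_1 \sim \cdots \sim x_m = x_0$ as the product $\sigma(x_0,x_1)\sigma(x_1,x_2)\cdots \sigma(x_{m-1},x_m)$; a one-line telescoping computation, using that $S^1$ is abelian, shows this is invariant under the gauge action $\sigma \mapsto \sigma^\tau$, so gauge-trivial potentials must have holonomy $1$ around every closed walk. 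Now if $G$ is not a forest, it contains a cycle $C$; pick a spanning forest $F$ of $G$ together with an edge $e = \{u,v\} \notin F$ that closes a cycle with a $u$-$v$ path in $F$. Setting $\sigma \equiv 1$ on all edges other than $e$ and $\sigma(u,v) = -1$ yields a magnetic potential whose holonomy around this cycle equals $-1$, so $\sigma$ cannot be gauged to the trivial signature.

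For the spectral part, I establish the sublemma that on a connected graph, $\lambda_1^\sigma(G) = 0$ if and only if $\sigma$ is gauge-equivalent to the trivial signature. The ``if'' direction follows because gauge-equivalent magnetic Laplacians are unitarily equivalent and $\Delta = \Delta^{\sigma_0}$ has smallest eigenvalue $0$. For the ``only if'' direction, any zero-eigenvalue eigenfunction $f \neq 0$ satisfies $\RR^\sigma(f) = 0$, which forces $f(x) = \sigma(x,y) f(y)$ on every edge $\{x,y\}$. This makes $|f|$ locally constant, hence constant on the connected graph, and (since $f \not\equiv 0$) nowhere zero. Setting $\tau := f/|f| \colon V \to S^1$, the identity $\sigma(x,y) = \tau(x)\tau(y)^{-1}$ gives $\sigma^\tau \equiv 1$. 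Combining everything: if $G$ is a forest, every $\sigma$ is gauge-trivial so $\lambda_1^\sigma(G) = 0$ for all $\sigma$ and hence $\nu(G) = 0$; if $G$ is not a forest, the $\sigma$ constructed above is not gauge-trivial, so by the sublemma $\lambda_1^\sigma(G) > 0$, forcing $\nu(G) > 0$.

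The main technical step I expect to be the real content is the extraction of the gauge function $\tau$ from a zero-eigenvalue eigenfunction in the sublemma, which rests on the observation that the edge-equation $f(x) = \sigma(x,y)f(y)$ makes $|f|$ locally constant. The other ingredients are routine: holonomy gauge-invariance is a telescoping identity, the existence of a non-trivial-holonomy $\sigma$ on a graph with a cycle is a one-line construction, and the quoted spanning-forest trivialization fact does the rest.
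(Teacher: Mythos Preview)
The paper states this lemma without proof. Your argument is the standard one and is correct: holonomy around a closed walk is gauge-invariant (a telescoping identity in the abelian group $S^1$), so a potential with holonomy $-1$ around a cycle cannot be gauged away; conversely, any zero-eigenfunction satisfies $f(x) = \sigma(x,y)f(y)$ on every edge, from which $\tau = f/|f|$ furnishes a trivializing gauge.

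One small point worth flagging: your sublemma is (correctly) stated only for connected $G$, but your final sentence invokes it for a general $G$. In the disconnected case the second assertion of the lemma is actually false as written --- if $G$ has a tree component alongside a separate component containing a cycle, then $G$ is not a forest yet $\nu(G) = 0$, since the tree component forces $\lambda_1^\sigma = 0$ for every $\sigma$ (compare the paper's own formula $\nu(G \cup H) = \min(\nu(G),\nu(H))$ in Section~\ref{sec:graphconstructions}). This is an imprecision in the lemma's statement rather than a defect in your method; under the tacit connectedness hypothesis your proof is complete.
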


\subsection{Cycles}\label{subsec:cycles}
\begin{example}[The cycle $C_n$] \label{ex:cycle}
Let $n \ge 3$. We will show for the $n$-cycle $C_n$ that we have
\begin{equation} \label{eq:nucycle}
\nu(C_n) = 2-2\cos(\pi/n). 
\end{equation}
Moreover, $C_n$ has a unique maximal potential $\hat \sigma$, up to gauge equivalence, and the multiplicity of $\lambda_1^{\hat \sigma}$ is two. 
\end{example}

In the remaining of this subsection, we provide the proof of the above statements. We enumerate the vertices of $C_n$ by $x_0,x_1,\dots,x_{n-1}$. By gauge equivalence and the fact that any path of length $n-1$ in $C_n$ is a spanning tree, we can restrict our considerations to magnetic potentials $\sigma$, which are non-trivial only in the edge $\{x_{n-2},x_{n-1}\}$, that is 
$$ \sigma_t(x_{n-2},x_{n-1}) = e^{it}. $$
The Laplacian $\Delta^{\sigma_t}$ is then  represented by the following matrix:
$$ \Delta^{\sigma_t} \cong \begin{pmatrix} \begin{array}{ccccccc|c}
2 & -1 & 0 & \cdots & \cdots & 0 & 0 & -1 \\ 
-1 & 2 & -1 & & & 0 & 0 & 0 \\
0 & -1 & 2 & \ddots & & 0 & 0& 0 \\
\vdots & & \ddots & \ddots & \ddots & & & \vdots \\
\vdots & & & \ddots & \ddots & \ddots & & \vdots \\
0 & 0 & 0 & & \ddots & 2 & -1 & 0 \\
0 & 0 & 0 & & & -1 & 2 & -e^{it} \\
\hline
-1 & 0 & 0 & \cdots & \cdots & 0 & -e^{-it} & 2
\end{array} \end{pmatrix}, $$
and the eigenfunctions $f_j: C_n \to \C$ are illustrated in Figure \ref{fig:Cdeigfunc} with $\xi_n = e^{2 \pi i/n}$. Note that, in the case $t=0$, the red factors disappear and we end up with the eigenfunctions of the standard Laplacian on $C_n$. Moreover, we check for any $j_1\neq j_2$ that
\begin{align}\label{eq:orthogonality}
\langle f_{j_1},f_{j_2} \rangle
&=\sum_{\ell=0}^{n-2}\left(e^{\frac{it}{n}}\xi_{n}^{j_1}\right)^{\ell}\overline{\left(e^{\frac{it}{n}}\xi^{j_2}_{n}\right)^{\ell}}+\left(e^{\frac{it}{n}}\xi_{n}^{j_1}\right)^{-1}\overline{\left(e^{\frac{it}{n}}\xi_n^{j_2}\right)^{-1}}\notag\\
&=\sum_{\ell=-1}^{n-2}\xi_{n}^{(j_1-j_2)\ell}=\xi_{n}^{-(j_1-j_2)}\sum_{\ell=0}^{n-1}\xi_{n}^{(j_1-j_2)\ell}=0.
\end{align}
\begin{figure}
\begin{center}
\begin{tikzpicture}[thick,scale=1]
\filldraw [black] (0,3) circle (2pt);
\filldraw [black] (3,6) circle (2pt);
\filldraw [black] (3,0) circle (2pt);
\filldraw [black] (7.7,0) circle (2pt);
\filldraw [black] (7.7,6) circle (2pt);
\filldraw [black] (10.7,3) circle (2pt);
\draw (1,1.5) node[label=below:$e^{i t}$]{};
\draw (1,4.5) node[label=above:$+1$]{};
\draw (5.6,6) node[label=above:$+1$]{};
\draw (5.6,0) node[label=below:$+1$]{};
\draw (9.2,4.5) node[label=above:$+1$]{};
\draw (3,6) node[label=above left:$x_0$]{};
\draw (7.7,6) node[label= above right:$x_1$]{};
\draw (10.7,3) node[label=right:$x_2$]{};
\draw (7.7,0) node[label=below right:$x_{n-3}$]{};
\draw (3,0) node[label=below left:$x_{n-2}$]{};
\draw (0,3) node[label=left:$x_{n-1}$]{};
\draw (3,6) node[label=below right:${\color{red}1} \cdot 1$]{};
\draw (7.7,6) node[label=below left:${\color{red}e^{\frac{it}{n}}}\cdot \xi_n^j$]{};
\draw (10.7,3) node[label=left:${\color{red}e^{\frac{2it}{n}}}\cdot \xi_n^{2j}$]{};
\draw (8.4,0.1) node[label=above left: ${\color{red}e^{\frac{(n-3)it}{n}}}\cdot \xi_n^{(n-3)j}$]{};
\draw (2.6,0.1) node[label=above right:${\color{red}e^{\frac{(n-2)it}{n}}}\cdot \xi_n^{(n-2)j}$]{};
\draw (0,3) node[label=right:${\color{red}e^{\frac{-it}{n}}}\cdot \xi_d^{(n-1)j}$]{};
\draw (0,3) -- (3,6);
\draw (0,3) -- (3,0);
\draw (3,0) -- (7.7,0);
\draw (3,6) -- (7.7,6);
\draw (7.7,6) -- (10.7,3);
\draw[dashed] (7.7,0) -- (10.7,3);
\end{tikzpicture}
\end{center}
\caption{Eigenfunctions $f_j$ of the magnetic Laplacian $\Delta^\sigma$ on $C_n$ with $j=0,1,\dots,n-1$. The red factors describe the influence of the magnetic potential $\sigma_t$. \label{fig:Cdeigfunc}}
\end{figure}
\FloatBarrier
It is easy to check that the corresponding eigenvalue to $f_j$ is
\begin{equation} \label{eq:mujt} 
\mu_j(t) = 4 \sin^2\left( \frac{t+2\pi j}{2n}\right) = 2 - 2\cos\left(\frac{t+2\pi j}{n}\right) = 2 - \left( \xi_n^j e^{i \frac{t}{n}} + \xi_n^{-j} e^{-i \frac{t}{n}} \right). 
\end{equation}
Note that we have for $t \in [0,\pi]$,
$$ \cos\left(\frac{t}{n}\right) \ge \cos\left(\frac{\pi}{n}\right) \ge \cos\left( \frac{t+2j\pi}{n} \right), $$
since $\frac{t+2\pi j}{n} \in [\frac{2\pi}{n},2\pi-\frac{\pi}{n}]$ for $j=1,2,\dots,n-1$. This implies for $t \in [0,\pi]$ that
$$ \mu_0(t) \le \mu_j(t) \quad \text{for all $j=1,2,\dots,n-1$}, $$
with equality if and only if $t=\pi$ and $j=n-1$.
Moreover, we have for $t \in [0,\pi]$,
$$ \lambda_1(\Delta^{\sigma_t}) = \mu_0(t) = 2-2\cos(t/n).  $$
Since the eigenvalue graphs are symmetric with respect to $\pi$, we conclude that
$$
\nu(C_n) = \mu_0(\pi) = 2-2\cos(\pi/n). 
$$
Moreover, the multiplicity of the smallest eigenvalue of $\Delta^{\sigma_\pi}$ is precisely $2$, since for $t=\pi$,
$$ \mu_0(\pi) = \mu_{n-1}(\pi) < \mu_j(\pi) \quad \text{for all $j =1,2,\dots,n-2$.} $$

The above results directly imply the following asymptotics of $\nu(C_n)$ as $n \to \infty$.

\begin{corollary} We have
$$ \nu(C_n) = 4\sin^2(\pi/(2n)) \le \frac{\pi^2}{n^2}, $$
and therefore $\nu(C_n) \to 0$ as $n \to \infty$.
\end{corollary}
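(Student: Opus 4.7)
The plan is to derive this corollary directly from the identity $\nu(C_d) = 2 - 2\cos(\pi/d)$ established in Example \ref{ex:cycle}. First I would apply the standard half-angle identity $1 - \cos(\theta) = 2 \sin^2(\theta/2)$ with $\theta = \pi/d$ to rewrite
\[
\nu(C_d) = 2 - 2\cos(\pi/d) = 4 \sin^2\!\left(\frac{\pi}{2d}\right),
\]
which gives the claimed equality. This step is essentially a trigonometric identity and requires no further work.

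Next, I would invoke the elementary inequality $\sin(x) \le x$ valid for all $x \ge 0$, applied to $x = \pi/(2d)$, to conclude
\[
4 \sin^2\!\left(\frac{\pi}{2d}\right) \le 4 \cdot \frac{\pi^2}{4 d^2} = \frac{\pi^2}{d^2}.
\]
The convergence $\nu(C_d) \to 0$ as $d \to \infty$ then follows immediately from the upper bound $\pi^2/d^2 \to 0$.

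There is no real obstacle here; the work is entirely performed in Example \ref{ex:cycle}, and the corollary is a direct trigonometric reformulation plus a single application of $\sin x \le x$. The only thing one might wish to remark on is that the upper bound $\pi^2/d^2$ is not sharp (the asymptotic is $\pi^2/d^2 + O(d^{-4})$), but sharpness is not needed for the stated conclusion.
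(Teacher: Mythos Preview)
Your proposal is correct and matches the paper's approach: the paper does not even give an explicit proof of this corollary, merely stating that it ``directly implies'' from Example~\ref{ex:cycle}, and your argument via the half-angle identity and $\sin x \le x$ is exactly the intended elementary derivation.
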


\subsection{Complete graphs}

\begin{example}[The complete graph $K_n$] \label{ex:completegraph} For the complete graph $K_n$, $n \ge 2$, we will show that
$$
\nu(K_n) = n-2.
$$
Moreover, the anti-balanced signature  $\overline{\sigma}\equiv -1$ is the unique maximal potential of $K_n$, up to gauge equivalence, and the multiplicity of the eigenvalue $\lambda_1^{\overline{\sigma}}$ is $n-1$. 
\end{example}

Let us now provide proofs of these statements. We start with the following lemma, which can be viewed as a special case of the later Theorem \ref{thm:outdegree_upper_bound}(a) in Section \ref{sec:genresults}. For the reader's convenience, we also present a proof in this simpler case.

\begin{lemma} \label{lem:upbdxdy} Let $G=(V,E)$ be a graph and $T \subset V$ be a subset whose induced subgraph is a forest.
Then we have
\begin{equation} \label{eq:nuisoperi} 
\nu(G) \le \frac{|\partial T|}{|T|}, 
\end{equation}
where $\partial T \subset E$ is the set of edges connecting vertices of $T$ with vertices of $V \setminus T$. In particular, we have
\begin{equation} 
\label{eq:nuGmindxdy}
\nu(G) \le \min_{x \sim y} \frac{d_x+d_y}{2}-1. 
\end{equation}
\end{lemma}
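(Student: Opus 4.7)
The plan is to use the variational characterization of $\lambda_1^\sigma$ together with gauge invariance of the spectrum and the fact (from Lemma \ref{lem:treenu}) that any magnetic potential restricted to a forest can be gauged to be trivial. The test function will simply be the characteristic function $\mathbf{1}_T$ of the vertex set of $T$.

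First, I would fix an arbitrary magnetic potential $\sigma: E^{or}(G) \to S^1$. Since $T$ is a forest, there exists a gauge function $\tau: V \to S^1$ such that the gauge-transformed potential $\sigma^\tau$ satisfies $\sigma^\tau(x,y) = 1$ on every oriented edge $(x,y)$ lying inside $T$. Because gauge-equivalent magnetic Laplacians are unitarily equivalent, $\lambda_1^\sigma(G) = \lambda_1^{\sigma^\tau}(G)$, so it suffices to bound the latter.

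Next I would test the Rayleigh quotient $\RR^{\sigma^\tau}$ against $f = \mathbf{1}_T$. The denominator clearly equals $|T|$, the number of vertices of the subgraph $T$. The numerator splits according to the position of the edge: edges with both endpoints in $T$ contribute $|1 - \sigma^\tau(x,y)\cdot 1|^2 = 0$ thanks to the gauge choice; edges with both endpoints outside $T$ contribute $0$ since $f$ vanishes there; and each edge of $\partial T$ contributes exactly $1$, regardless of the value $\sigma^\tau$ takes on it, because it is of the form $|1 - \sigma^\tau\cdot 0|^2$ or $|0 - \sigma^\tau\cdot 1|^2$. Hence
\[
\lambda_1^\sigma(G) \;=\; \lambda_1^{\sigma^\tau}(G) \;\le\; \RR^{\sigma^\tau}(\mathbf{1}_T) \;=\; \frac{|\partial T|}{|T|},
\]
and taking the supremum over all $\sigma$ gives \eqref{eq:nuisoperi}.

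For \eqref{eq:nuGmindxdy}, I would apply \eqref{eq:nuisoperi} to the subgraph $T$ consisting of the single edge $\{x,y\}$ for an arbitrary adjacent pair $x\sim y$; this is trivially a forest with $|T|=2$, and $|\partial T| = (d_x - 1) + (d_y - 1) = d_x + d_y - 2$, yielding $\nu(G) \le (d_x + d_y)/2 - 1$. Minimizing over all edges $\{x,y\}$ finishes the proof. There is no real obstacle here; the only subtlety worth mentioning is being careful that the gauge on the forest $T$ does not affect the estimate, which works precisely because $\mathbf{1}_T$ takes only the values $0$ and $1$ and the contributions from $\partial T$-edges are automatically independent of the $S^1$-value of $\sigma^\tau$ on those edges.
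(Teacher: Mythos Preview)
Your proof is correct and is essentially the same argument as the paper's. The only cosmetic difference is that the paper builds the test function explicitly as $f(x)=\prod_j \sigma(x_{j-1},x_j)^{-1}$ along the unique path in $T$ (and $0$ off $T$), whereas you first gauge $\sigma$ to be trivial on $T$ and then use $\mathbf{1}_T$; these are the same thing, since the paper's $f$ is exactly $\overline{\tau}\cdot\mathbf{1}_T$ for your gauge function $\tau$, and the Rayleigh quotients match via the unitary equivalence.
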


\begin{remark} Inequality \eqref{eq:nuisoperi} in  Lemma \ref{lem:upbdxdy} can also be viewed as the inequality
$$ \nu(G) \le h_{\rm{forest}}(G), $$
where $h_{\rm{forest}}$ is a Cheeger-type constant, which takes the infimum of isoperimetric ratios only for induced subgraphs which are forests. 
\end{remark}

\begin{proof}[Proof of Lemma \ref{lem:upbdxdy}] We first present the proof in the special case when $T$ is a tree. Let $\sigma: E^{or} \to S^1$ be an arbitrary magnetic potential and $x_0 \in T$. Choose
$f: V \to \C$ as follows: We set $f(x_0) =1$ For any vertex $x \in T \setminus \{x_0\}$, choose the unique path $x_0 \sim x_1 \sim \cdot \sim x_l = x$ in $T$ and define
$$ f(x) = \sigma(x_0,x_1)^{-1} \sigma(x_1,x_2)^{-1} \cdots \sigma(x_{l-1},x_l)^{-1}. $$
For any vertex $x \in V \setminus T$, define
$$ f(x) = 0. $$
Then we have 
$$
\lambda_1^\sigma(G) \le \RR^\sigma(f) = \frac{\displaystyle{\sum_{x \sim y \atop x\in T, y \in V\setminus T}} |f(x)-\sigma(x,y)f(y)|^2}{\sum_{x \in T} |f(x)|^2}
= \frac{|\partial T|}{|T|}.
$$
In the case that $T$ is a forest, we choose in every connected component of $T$ a base point $x_0$ to define the test function $f$ and argue accordingly.

For the second part of the proposition, choose $T=\{x,y\}$ for any pair $x \sim y$.
\end{proof}

Lemma \ref{lem:upbdxdy} yields the following upper bound for the complete graph $K_n$:
$$ \nu(K_n) \le \min_{x \sim y} \frac{d_x+d_y}{2} - 1 = (n-1)-1 = n-2. $$
On the other hand, we have
$$ \lambda^{\overline{\sigma}}(K_n) = n-2 $$
for the anti-balanced magnetic potential $\overline{\sigma}\equiv -1$, since 
$$ \Delta^{ \overline{\sigma}} \equiv (n-2) {\rm{Id}}_{n} + J_n, $$
with the all-$1$ square matrix $J_n$ of size $n$, and all non-zero vectors perpendicular to the all-$1$ vector of size $n$ are eigenvectors to the eigenvalue $n-2$, whereas the all-$1$ vector of size $n$ is an eigenvector to the eigenvalue $2(n-1) > n-2$. Therefore, we have equality $\nu(G) = n-2$, assumed by the anti-balanced magnetic potential $\overline{\sigma}$. For this potential, the multiplicity of $\lambda_1^{\overline{\sigma}}$ is $n-1$. 

It remains to show that any maximal magnetic potential of $K_n$ is gauge equivalent to the anti-balanced signature $\overline{\sigma}$. Let $\hat \sigma$ be a maximal magnetic potential on $K_n$. For $k,l \in \{1,\dots,n\}$, we introduce the function
$$ f_{k,l}(x_m) = \begin{cases} 1 & \text{if $m=k$,} \\
\hat \sigma(x_k,x_l)^{-1} & \text{if $m=l$,} \\ 0 & \text{otherwise.} \end{cases} $$
Then we have
$$ \lambda_1^{\hat \sigma}(K_n) = n-2 = \RR^{\hat \sigma}(f_{k,l}), $$
and $f_{k,l}$ is therefore an eigenfunction of $\Delta^{\hat \sigma}$ to the eigenvalue $n-2$. The eigenfunction equation yields for $m \not\in\{k,l\}$,
$$ \Delta^{\hat \sigma} f_{k,l}(x_m) = \underbrace{\sum_{j \neq m} f_{k,l}(x_m) - \hat \sigma(x_m,x_j) f_{k,l}(x_j)}_{\hat \sigma(x_m,x_k)- \hat \sigma(x_m,x_l)\hat \sigma(x_k,x_l)^{-1}} = (n-1)f_{k,l}(x_m) = 0, $$
which implies 
$$ \hat \sigma(x_m,x_k) \hat \sigma(x_k,x_l) \hat \sigma(x_l,x_m) = -1. $$
Choosing the gauge function $\tau: V \to S^1$,
$$ \tau(z) = \begin{cases} -1, & \text{if $z=x_1$,} \\
\hat \sigma(x_1,x_j)^{-1}, & \text{if $z=x_j$, $j \ge 2$,} \end{cases} $$
we obtain
$$ \hat \sigma^\tau(x_1,x_j) = \tau(x_1)^{-1} \hat \sigma(x_i,x_j)\tau(x_j) = -1 \quad \text{for all $j \ge 2$.} $$
Since, for $k,l \ge 2$, $k\neq l$,
$$\hat \sigma^\tau(x_k,x_l) = \hat \sigma^\tau(x_1,x_k) \hat \sigma^\tau(x_k,x_l) \hat \sigma^\tau(x_l,x_1) = \hat \sigma(x_1,x_k) \hat \sigma(x_k,x_l) \hat \sigma(x_l,x_1) = -1,$$
$\hat \sigma^\tau$ is the anti-balanced signature.
\qed

\medskip

To derive the rigidity statement in Corollary \ref{cor:complrigid} below, we start with the following general fact.

\begin{proposition} \label{prop:nudxdy}
Let $G=(V,E)$ be a graph and $\{x,y\} \in E$ be an edge. If
$$ \nu(G) = \frac{d_x+d_y}{2}-1, $$
then we have
$$ S_1(x) \setminus \{y\} = S_1(y) \setminus \{x\}. $$
In particular, we have $d_x=d_y$.
\end{proposition}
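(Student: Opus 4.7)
The plan is to exploit the equality case of Lemma~\ref{lem:upbdxdy} applied to the two-vertex forest $T = \{x,y\}$. The set of magnetic potentials is compact (it is a torus $(S^1)^{|E|}$) and $\sigma \mapsto \lambda_1^\sigma(G)$ is continuous, so the supremum defining $\nu(G)$ is attained by some maximal magnetic potential $\sigma^*$. Inspecting the proof of Lemma~\ref{lem:upbdxdy}, the test function
$$ f(x) = 1, \qquad f(y) = \sigma^*(x,y)^{-1}, \qquad f \equiv 0 \text{ on } V \setminus \{x,y\} $$
already realizes the Rayleigh quotient value $(d_x+d_y)/2 - 1$. Under our hypothesis this equals $\nu(G) = \lambda_1^{\sigma^*}(G)$, so $f$ minimizes $\RR^{\sigma^*}$ and is therefore an eigenfunction of $\Delta^{\sigma^*}$ for the eigenvalue $\nu(G)$.

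I would then read off the pointwise eigenvalue equation $\Delta^{\sigma^*} f = \nu(G) f$. At $x$, the contribution from the neighbor $y$ is $f(x) - \sigma^*(x,y) f(y) = 0$ by the choice of $f(y)$, while each of the remaining $d_x - 1$ neighbors contributes $f(x) = 1$; hence $\Delta^{\sigma^*} f(x) = d_x - 1$, and comparing with $\nu(G) f(x) = (d_x+d_y)/2 - 1$ forces $d_x = d_y$. At any $z \in V \setminus \{x,y\}$ we have $f(z) = 0$, so the eigenvalue equation reduces to
$$ \Delta^{\sigma^*} f(z) = -\sum_{w \in \{x,y\} \cap S_1(z)} \sigma^*(z,w)\, f(w) = 0. $$
If $z$ were adjacent to exactly one of $x, y$, a single nonvanishing unit-modulus term would remain, a contradiction. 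Hence $z$ is adjacent to both or to neither, which is precisely $S_1(x) \setminus \{y\} = S_1(y) \setminus \{x\}$.

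There is essentially no deep obstacle in the argument. The main subtlety is the opening compactness step, ensuring that the supremum in the definition of $\nu(G)$ is actually attained, so that the saturating test function really is an eigenfunction rather than merely part of a minimizing sequence for the Rayleigh quotient. As a byproduct (not needed for the stated conclusion), the case in which $z$ is a common neighbor of $x$ and $y$ yields the identity $\sigma^*(z,x) + \sigma^*(z,y)\sigma^*(x,y)^{-1} = 0$, equivalently $\sigma^*(z,x)\sigma^*(x,y)\sigma^*(y,z) = -1$, so that every triangle through the edge $\{x,y\}$ carries holonomy $-1$ under any maximizing potential $\sigma^*$.
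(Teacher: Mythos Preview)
Your proof is correct and follows essentially the same approach as the paper: pick a maximal magnetic potential, observe that the two-vertex test function saturates the Rayleigh quotient and is therefore an eigenfunction, and read off the eigenvalue equation at vertices outside $\{x,y\}$. The only cosmetic differences are that you make the compactness argument for attainment of the supremum explicit (the paper uses a maximal potential without comment), and you extract $d_x=d_y$ from the eigenvalue equation at $x$ whereas the paper deduces it afterwards from the neighbourhood equality; your closing remark on the triangle holonomy $-1$ is exactly what the paper later exploits in Proposition~\ref{prop:completerigid}.
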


\begin{proof}
    Let $\hat \sigma: E^{or}(G) \to S^1$ be a maximal magnetic potential, that is,
    $$ \nu(G) = \lambda_1^{\hat \sigma}(G). $$
    Let $f: V \to \C$ be the following function
    $$ f(z) = \begin{cases} 1 & \text{if $z=x$,} \\
\hat \sigma(x,y)^{-1} & \text{if $z=y$,} \\ 0 & \text{otherwise.} \end{cases}$$
    Then we have
    $$ \frac{d_x+d_y}{2}-1 = \nu(G) = \lambda_1^{\hat \sigma}(G) \le \RR^{\hat \sigma}(f) = \frac{d_x+d_y}{2}-1. $$
    Therefore, $f$ is an eigenfunction of $\Delta^{\hat \sigma}$ to the eigenvalue $\frac{d_x+d_y}{2}-1$. Let us denote the neighbours of $x$ by $y=y_1,y_2,\dots,y_{d_x}$. Assume $y_j$, $j \in \{2,\dots,d_x\}$ is not a neighbour of $y=y_1$. Then we have
    $$ \Delta^{\hat \sigma}f(y_j) = \sum_{z \sim y_j} f(y_j)-\hat \sigma(y_j,z)f(z) = -\hat \sigma(y_j,x)f(x) = 0, $$
    which is a contradiction. This shows that all neighbours of $x$, different from $y$, are also neighbours of $y$. Interchanging the role of $x$ and $y$ implies that we have $d_x=d_y.$
\end{proof}

Now we can prove the following rigidity result.

\begin{corollary} \label{cor:complrigid}
Any connected graph $G=(V,E)$ satisfying
$$ \nu(G) = \max_{x \sim y} \frac{d_x+d_y}{2}-1 $$
is already a complete graph. In particular, any connected graph $G$ with
$$ \nu(G) = d_{\max}-1, $$
where $d_{\max}$ is the maximal vertex degree of $G$, 
is a complete graph $K_{d_{\max}+1}$.
\end{corollary}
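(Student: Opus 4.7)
The plan is to chain together Lemma \ref{lem:upbdxdy} and Proposition \ref{prop:nudxdy} to first force the graph to be regular with all edges realizing the common value $\frac{d_x+d_y}{2}-1$, and then to bootstrap the local neighborhood-coincidence into global completeness.

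First I would observe that by Lemma \ref{lem:upbdxdy},
$$ \max_{x\sim y}\frac{d_x+d_y}{2}-1 \;=\; \nu(G) \;\le\; \min_{x\sim y}\frac{d_x+d_y}{2}-1. $$
Consequently the quantity $\frac{d_x+d_y}{2}-1$ is the same constant on every edge, and it equals $\nu(G)$. Applying Proposition \ref{prop:nudxdy} to every edge $\{x,y\}\in E$ then gives $d_x=d_y$ for every adjacent pair, and moreover $S_1(x)\setminus\{y\}=S_1(y)\setminus\{x\}$. Since $G$ is connected, the degree equalities propagate to yield that $G$ is $d$-regular for some $d\ge 1$.

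Next I would fix an arbitrary vertex $x$ with neighbors $y_1,\dots,y_d$ and argue that $\{x,y_1,\dots,y_d\}$ induces a clique $K_{d+1}$. Indeed, for each index $i$ the edge $\{x,y_i\}$ gives $S_1(y_i)\setminus\{x\}=S_1(x)\setminus\{y_i\}=\{y_1,\dots,y_d\}\setminus\{y_i\}$, so $y_i\sim y_j$ for every $j\neq i$. Hence the closed neighborhood $\{x\}\cup S_1(x)$ has $d+1$ vertices, each of degree $d$ within this set; by $d$-regularity none of these vertices has any further neighbor outside of this set. Connectedness of $G$ then forces $V=\{x,y_1,\dots,y_d\}$, so $G=K_{d+1}$.

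Finally, for the ``in particular'' statement, suppose $\nu(G)=d_{\max}-1$. Since $\frac{d_x+d_y}{2}\le d_{\max}$ for every edge, we have $\max_{x\sim y}\frac{d_x+d_y}{2}-1\le d_{\max}-1=\nu(G)$, while Lemma \ref{lem:upbdxdy} provides the reverse inequality. Thus $\nu(G)=\max_{x\sim y}\frac{d_x+d_y}{2}-1$, and the first part applies; the common degree $d$ must equal $d_{\max}$, yielding $G=K_{d_{\max}+1}$.

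The only step that is not immediate bookkeeping is the passage from the local neighborhood identity $S_1(x)\setminus\{y\}=S_1(y)\setminus\{x\}$ to global completeness, but this is handled cleanly by invoking $d$-regularity and connectedness after establishing the $(d+1)$-clique on the closed neighborhood of a single vertex.
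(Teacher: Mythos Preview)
Your proof is correct and follows essentially the same approach as the paper: both combine Lemma \ref{lem:upbdxdy} with Proposition \ref{prop:nudxdy} to force the edge-quantity $\frac{d_x+d_y}{2}-1$ to be constant and then extract regularity and completeness. The paper's version is terser at the final step (it simply asserts that Proposition \ref{prop:nudxdy} yields regularity and completeness on each connected component), whereas you spell out explicitly how the neighborhood identity $S_1(x)\setminus\{y\}=S_1(y)\setminus\{x\}$ turns the closed neighborhood of any vertex into a $K_{d+1}$ with no external edges; this is a welcome clarification rather than a different method.
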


\begin{proof}
    It follows from Lemma \ref{lem:upbdxdy} that we have
    $$ \min_{x \sim y} \frac{d_x+d_y}{2} - 1 \ge \nu(G) = \max_{x \sim y} \frac{d_x+d_y}{2}-1,$$
    which implies $d_x+d_y$ is constant for any edge $\{x,y\} \in E$. It follows from Proposition \ref{prop:nudxdy} that any connected component of $G$ is regular and complete. Since $G$ is assumed to be connected, the first statement of the corollary follows. 

    Since 
    $$\max_{x \sim y} \frac{d_x+d_y}{2} \le d_{\max}, $$
    the condition $\nu(G) = d_{\max}-1$ implies that we have $d_x=d_y=d_{\max}$ for all adjacent vertices. Since $G$ is connected, $G$ is regular and Proposition \ref{prop:nudxdy} implies again that $G$ is a complete graph.
\end{proof}

\subsection{Wheel graphs}
\begin{example}[Suspension of a tree] \label{ex:treesuspension}
Let $T$ be a tree and $\tilde T$ be its suspension, that is, we add a vertex $x$ to $T$ and connect it to all vertices in $T$. Then we have
$$ \nu(\tilde T) \le 1. $$
This follows directly from \eqref{eq:nuisoperi} in Lemma \ref{lem:upbdxdy}. In fact, we will see later (see Theorem \ref{thm:suspgraph}) that we have $\nu(\tilde T) = 1$ for all trees. 
\end{example}

Next we consider suspensions of cycles. The suspension of a cycle $C_n$ ($n \ge 3$) is the wheel graph $W_n$. In the next example we compute an explicit lower bound for $\nu(
W_n)$. In contrast to the suspension of trees, we will see that $\nu(W_n) > 1$.  

\begin{example}[The wheel graphs $W_n$] \label{ex:wheel} Let $n \ge 3$ and $x_0,\dots,x_{n-1}$ be the vertices of $W_n$ in the cycle $C_n \subset W_n$ and $x \in W_n$ be the suspension vertex. Let $\sigma_1: E^{or}(W_n) \to \{-1,1\}$ be given by  
$$ \sigma_1(x_j,x_{j+1}) = \begin{cases} 1 & \text{for $j=0,1,\dots,n-2$,} \\
-1 & \text{for $(j,j+1)=(n-1,0)$,}\end{cases}$$ 
and
$$ \sigma_1(x,x_j) = \begin{cases} -\left(e^{-i \frac{2k+1}{n}\pi}\right)^j & \text{for $j=0,1,\dots,n-2$,} \\
- e^{i \frac{2k+1}{n} \pi} & \text{for $j=n-1$.}\end{cases} $$ 
We will show that the eigenvalues of $\Delta^{\sigma_1}$ on $W_n$ (with multiplicities) are given by the values 
$$ 3 - 2 \cos\left( \frac{\pi+2\pi j}{n} \right) \quad \text{with $j =0,1,\dots,n-1$, $j \neq k$}, $$
all counted once,
and the roots of
$$ \lambda^2-\left(n+3-2\cos\left(\frac{2k+1}{n}\pi \right)\right)\lambda+\left( 2-2 \cos \left( \frac{2k+1}{n} \pi \right)\right)n = 0, $$
(with $n=2k$ for even $n$ and $n=2k+1$ for odd $n$) which are also counted once.
Note that this equation simplifies in the case $n=2k+1$ to
$$ \lambda^2-(n+5)\lambda + 4n = 0. $$
Using Lemma \ref{lem:destimates} below, this implies that
$$ \nu(W_n) \ge \lambda_1^{\sigma_1}(W_n) = 3-2 \cos\left( \frac{\pi}{n} \right) = 1 + \nu(C_n). $$

Note that the restriction of $\sigma_1$ to $C_n$, denoted again by $\sigma_1$, for simplicity, is a maximal magnetic potential of $C_n$.
The eigenfunctions of $\Delta^{\sigma_1}_{C_n}$ 
are given by (cf. Figure \ref{fig:Cdeigfunc} with $t=\pi$)
$$f_j=\begin{pmatrix}
    1 \\
    e^{\frac{i\pi}{n}}\xi_{n}^j \\
    (e^{\frac{i\pi}{n}}\xi_{n}^j)^2 \\
    \vdots \\
    (e^{\frac{i\pi}{n}}\xi_{n}^j)^{n-2} \\
    e^{-\frac{i\pi}{n}}(\xi_{n}^j)^{-1}
\end{pmatrix},\quad \text{ $j=0,1,\cdots, n-1$.}$$
Note that we have 
$$ \overline{f_k}=\begin{pmatrix}
    -\sigma_1(x,x_0) \\
    -\sigma_1(x,x_1) \\
    -\sigma_1(x,x_2) \\
    \vdots \\
    -\sigma_1(x,x_{n-2}) \\
    -\sigma_1(x,x_{n-1})
\end{pmatrix}, $$
and, therefore, we have
$$\Delta^{\sigma_1}_{W_n}\cong\begin{pmatrix}\begin{array}{c|ccc}
n &  & \overline{f_{k}}^{\top}  &  \\
\hline
   &    &     &    \\
f_k &   & \Delta^{\sigma_1}_{C_n}+\mathrm{Id} &  \\
    &   &    &
\end{array}    
\end{pmatrix}.$$
The orthogonality relation \eqref{eq:orthogonality} implies that $\Delta^{\sigma_1}_{W_n}$ has the following set of $(n-1)$ eigenfunctions: 
$$\left\{\begin{pmatrix}
    0 \\
    f_j
\end{pmatrix},\,\,\,j=0,1,\cdots,k-1,k+1,\cdots,n-1\right\},$$
corresponding to the eigenvalues $\mu_j(\pi)+1$, $j \neq k$, with the eigenvalues $\mu_j(\pi)$ of $\Delta_{C_n}^{\sigma_1}$ given in \eqref{eq:mujt}.
The remaining $2$-dimensional vector space orthogonal to $span\{f_0,f_1,\cdots,f_{k-1},f_{k+1},\cdots,f_{n-1}\}$ can be written as 
$$\left\{\begin{pmatrix}
    a \\
    bf_k
\end{pmatrix},\,\,\,a,b\in \mathbb{C}\right\}.$$
Solving the eigenfunction equation
$$\begin{pmatrix}
    n  & \overline{f_k}^{\top} \\
    f_k & \Delta^{\hat \sigma}_{C_n}+\mathrm{Id}
\end{pmatrix}
\begin{pmatrix}
    a \\
    bf_{k}
\end{pmatrix}
=\lambda
\begin{pmatrix}
   a \\
   bf_k
\end{pmatrix}$$
leads to
$$\begin{cases}
    (\lambda-n)a=nb,\quad &\text{(A)}\\
    a=(\lambda-\mu_k(\pi)-1)b, &\text{(B)}
\end{cases}$$
with
$$ \mu_k(\pi) = 2-2\cos\left( \frac{2k+1}{n} \pi\right). $$
Replacing $a$ with $(B)$ in $(A)$ yields
$$(\lambda-n)(\lambda-\mu_k(\pi)-1)b=nb. $$
Since $b=0$ implies $\begin{pmatrix}
    a \\
    bf_k
\end{pmatrix}= 0$, in contradiction to an eigenfunction,  we can divide by $b \neq 0$ and obtain the quadratic equation
$$\lambda^2-(n+\mu_k(\pi)+1)\lambda+\mu_k(\pi)n=0,$$
which gives the remaining two eigenvalues of $\Delta^{\sigma_1}_{W_n}$.
\end{example}

\begin{lemma}\label{lem:destimates}
\begin{itemize}
    \item[(a)] Let $n \ge 3$ be odd. Then we have
    $$ \frac{n+5 - \sqrt{(n+5)^2-16n}}{2} \ge 3-2\cos(\pi/n). $$
    \item[(b)] Let $n \ge 4$ be even. Then we have
    $$ \frac{n+3+2\cos(\pi/n) - \sqrt{(n+3+2\cos(\pi/n))^2-4n(2+2\cos(\pi/n))}}{2} \ge 3-2\cos(\pi/n). $$
\end{itemize}
\end{lemma}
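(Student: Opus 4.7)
My plan is to convert each inequality into the standard form ``$\alpha \le \lambda_-$'', where $\alpha := 3-2\cos(\pi/d)$ is the right-hand side and $\lambda_-$ is the smaller root of the upward-opening quadratic $P(\lambda) := \lambda^2 - B\lambda + C$ appearing on the left, with $B = d+5$, $C = 4d$ in (a), and $B = d+3+2\cos(\pi/d)$, $C = d(2+2\cos(\pi/d))$ in (b). The elementary lemma I would invoke is that $\alpha \le \lambda_-$ holds if and only if both $P(\alpha) \ge 0$ and $\alpha \le B/2$ (so that $\alpha$ sits at or below the vertex abscissa). The vertex condition reduces in each case to a trivial linear inequality in $d$ — namely $d \ge 1 - 4c$ in (a) and $d \ge 3 - 6c$ in (b), with $c := \cos(\pi/d) \in [0,1]$ — so all the substance is in showing $P(\alpha) \ge 0$.

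To do that I would directly substitute $\alpha = 3 - 2c$ into $P$ and simplify. The expected outcomes are $P(\alpha) = 4c^2 + 2(d-1)c + (d-6)$ in (a) and $P(\alpha) = 8c^2 + 4(d-3)c - d$ in (b). Each is a quadratic in $c$ with positive leading coefficient whose vertex sits at a strictly negative value of $c$ (namely $-(d-1)/4$ and $-(d-3)/4$ respectively), so both expressions are monotonically increasing for $c \in [0,1]$. It therefore suffices to verify nonnegativity at the smallest admissible value of $c$ in each case.

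For odd $d \ge 3$ we have $c \ge \cos(\pi/3) = 1/2$; plugging $c = 1/2$ into the (a)-expression yields $2d - 6 \ge 0$, with equality precisely at $d = 3$ (consistent with $W_3 = K_4$ and $\nu(K_4) = 2 = 3 - 2\cos(\pi/3)$). For even $d \ge 4$ we have $c \ge \cos(\pi/4) = \sqrt{2}/2$; plugging $c = \sqrt{2}/2$ into the (b)-expression yields $(2\sqrt{2}-1)d + 4 - 6\sqrt{2}$, which is linear in $d$ with positive slope $2\sqrt{2}-1$ and takes the value $2\sqrt{2} > 0$ at $d=4$, so it remains nonneg for all even $d \ge 4$.

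The only real obstacle is careful bookkeeping in the algebraic expansion of $P(\alpha)$; once the reduction to a quadratic in $c$ with negative-abscissa vertex is made, each part collapses to a single endpoint evaluation and a trivial monotonicity argument.
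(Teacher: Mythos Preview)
Your argument is correct. The reduction ``$\alpha \le \lambda_-$ iff $P(\alpha)\ge 0$ and $\alpha \le B/2$'' is valid for an upward-opening quadratic with real roots, the algebraic expansions of $P(\alpha)$ are accurate in both cases, and the monotonicity-in-$c$ plus endpoint check is sound.

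Your route and the paper's differ mainly in organization. The paper verifies $d=3,4$ by hand, then for $d\ge 5$ proves (b) directly (its fact (i), which after rearrangement is exactly your inequality $8c^2+4(d-3)c-d\ge 0$) and deduces (a) from (b) via a separate monotonicity lemma (its fact (ii)) stating that $c\mapsto d+3+2c-\sqrt{(d+3+2c)^2-4d(2+2c)}$ is increasing on $[0,1]$; since the left side of (a) is this function evaluated at $c=1$, (a) follows. You instead treat (a) and (b) in parallel via the same ``$P(\alpha)\ge 0$'' template, obtaining for (a) its own quadratic $4c^2+2(d-1)c+(d-6)$ and checking it directly. This buys you a uniform argument with no special-casing of $d=3,4$ (the equality at $d=3$ falls out of your endpoint evaluation) and no need for the auxiliary monotonicity fact (ii); the paper's approach has the minor conceptual advantage of showing that (a) is actually a consequence of (b).
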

\begin{proof}
The inequality in (a) is an equality in the case $n=3$.
It is also easy to verify (b) for $n=4$.  So we can restrict to the case $n \ge 5$.

Both (a) and (b) are direct consequences of the following two facts:
\begin{itemize}
\item[(i)] We have
$$ n+3+2c-\sqrt{(n+3+2c)^2-4n(2+2c)} \ge 6-4c $$
for $d \ge n$ and $c= \cos(\pi/n)$.
\item[(ii)] The map
$$ [0,1] \ni c \mapsto n+3+2c-\sqrt{(n+3+2c)^2-4n(2+2c)} $$
is monotone increasing for all $n \ge 5$. 
\end{itemize}
(i) is equivalent to
$$ (4c-1)n + 8c^2 - 12 c \ge 0 $$
for $d \ge n$ and $c=\cos(\pi/n)$. Since $n \ge 5$, we have
$$ c=\cos(\pi/n) \ge \cos(\pi/4) = \frac{\sqrt{2}}{2}. $$
So it suffices to show
$$ (4c-1)n+8c^2-12c \ge 0$$
for $n \ge 5$ and $c \in [\sqrt{2}/2,1]$. Since $4c-1 > 0$, the left hand side is increasing in $n$, 
and it suffices to show the inequality for $n=5$. This reduces the problem to show that $8c^2+8c-5$ is non-negative for $c \in [\sqrt{2}/2,1]$, which is obviously true. This finishes the proof of (i).

To prove (ii), we set $\alpha=3+2c$ and need to verify, for all $n \ge 5$, that
$$ \alpha \mapsto n+\alpha- \sqrt{(n+\alpha)^2-4n(\alpha-1)} =
n+\alpha - \sqrt{(n-\alpha)^2+4n} $$
is monotone increasing for  $\alpha \in [3,5]$. This is obviously the case, which completes the proof of the lemma.
\end{proof}

The following result states that our lower bound for $\nu(W_n)$ is sharp.

\begin{proposition} \label{prop:wheelnu}
For any wheel graph $W_n$, $n\geq 3$, we have
\begin{equation} \label{eq:nuWdnuCd}
    \nu(W_{n})=\nu(C_{n})+1 = 3-2\cos(\pi/n).
\end{equation}
\end{proposition}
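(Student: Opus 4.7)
The lower bound $\nu(W_d) \ge 3 - 2\cos(\pi/d)$ is already in hand from Example \ref{ex:wheel}, which exhibits an explicit signature $\sigma_1$ achieving it. Thus the plan is reduced to establishing the matching upper bound $\nu(W_d) \le 1 + \nu(C_d)$, which I would prove by a direct Rayleigh-quotient argument.

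The first step is a gauge reduction. For any magnetic potential $\sigma: E^{or}(W_d) \to S^1$, the $d$ spoke edges $\{x, x_j\}$ form a star on all vertices of $W_d$, hence a spanning tree. By the spanning-forest remark in the introduction, $\sigma$ is gauge-equivalent to a potential (which I still call $\sigma$) satisfying $\sigma(x, x_j) = 1$ for every $j$. Since gauge-equivalent magnetic Laplacians are isospectral, we may assume this trivialization without changing $\lambda_1^\sigma(W_d)$. The remaining non-trivial information of $\sigma$ is concentrated on the cycle; I denote its restriction by $\hat\sigma: E^{or}(C_d) \to S^1$.

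The second step is the choice of test function. Let $g: V(C_d) \to \C$ be a ground-state eigenfunction of $\Delta^{\hat\sigma}_{C_d}$, so that $\langle g, \Delta^{\hat\sigma}_{C_d} g\rangle = \lambda_1^{\hat\sigma}(C_d)\,\|g\|^2$. Define $f: V(W_d) \to \C$ by
$$ f(x) = 0, \qquad f(x_j) = g(x_j) \quad \text{for } j = 0,1,\dots,d-1. $$
With $\sigma(x, x_j) = 1$ and $f(x) = 0$, each spoke edge contributes $|g(x_j)|^2$ to the numerator of $\RR^\sigma(f)$, giving total spoke contribution $\|g\|^2$; the cycle edges contribute $\lambda_1^{\hat\sigma}(C_d)\|g\|^2$; and the denominator equals $\|g\|^2$. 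Hence
$$ \lambda_1^\sigma(W_d) \le \RR^\sigma(f) = 1 + \lambda_1^{\hat\sigma}(C_d) \le 1 + \nu(C_d). $$
Taking the supremum over $\sigma$ yields $\nu(W_d) \le 1 + \nu(C_d) = 3 - 2\cos(\pi/d)$, completing the proof when combined with Example \ref{ex:wheel}.

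I do not expect a real obstacle here: the proof is a one-line Rayleigh-quotient estimate once the right test function is identified. The only conceptual point is the choice $f(x) = 0$ at the hub, which both eliminates the need to control $\sum_j g(x_j)$ (no orthogonality assumption on $g$ is required) and makes every spoke contribute exactly $|g(x_j)|^2$. The gauge reduction to trivial spokes is the natural preparation enabling this, and is the reason the wheel's magneto-spectral height decouples so cleanly into the cycle's plus one.
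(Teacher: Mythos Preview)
Your proof is correct and follows essentially the same route as the paper: the paper invokes Theorem~\ref{thm:outdegree_upper_bound}(b) with $G_0=C_d$, whose proof is precisely your Rayleigh-quotient argument of extending a ground-state eigenfunction on $C_d$ by zero to the hub. One minor remark: the gauge reduction on the spokes is unnecessary, since with $f(x)=0$ each spoke already contributes $|0-\sigma(x,x_j)g(x_j)|^2=|g(x_j)|^2$ regardless of $\sigma(x,x_j)$.
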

\begin{proof}
We apply Theorem \ref{thm:outdegree_upper_bound} in Section \ref{sec:genresults} below to wheel graphs, by choosing $G=(V,E)$ as $W_n$ and the subgraph $G_0=(V_0,E_0)$ as the cycle $C_n$. Then we obtain the inequality:
$$ \nu(W_n) \le \nu(C_n)+1. $$
Combining this with the results in Example \ref{ex:wheel} and Example \ref{ex:cycle} completes the proof of the proposition.
\end{proof}

\subsection{Hypercubes and complete bipartite graphs}

\begin{example}[Hypercubes] We will show that $d$-dimensional hypercubes $Q^d= (K_2)^d$ have magneto-spectral heights
$$ \nu(Q^d) = d-\sqrt{d}. $$
Moreover, $Q^d$ has a unique maximal potential, up to gauge equivalence, which is given by a signature $\hat \sigma: E^{or}(Q^d) \to \{\pm 1\}$ assuming the value $-1$ on all $4$-cycles (see Proposition \ref{prop:nuhypercube} below).  
\end{example}

The vertices of the $d$-dimensional hypercube $Q^d$ can be identified with $d$-tuples $(a_1,\dots,a_d) \in (\mathbb{Z}_2)^d$, $\mathbb{Z}_2 = \{0,1\}$, and every $4$-cycle in $Q^d$ consists of the vertices $(a_1,\dots,a_d) + \mathbb{Z}_2 e_i + \mathbb{Z}_2 e_j\, ({\rm{mod}}\, 2)$ for $1 \le i < j \le d$. 
Note that, for $d \ge 2$, $Q^d$ has precisely $2^{d-2} {d \choose 2}$ different $4$-cycles and that every edge of $Q^d$ is contained in precisely $d-1$ $4$-cycles. In the arguments below, we use the following parity function $\rho: V(Q^d) \to \{ \pm 1 \}$ on the vertices of $Q^d$:
$$ \rho(a_1,\dots,a_d) = (-1)^{\sum_{j=1}^d a_j}. $$

Our first result states the existence of a particular family of potentials of $Q^d$.

\begin{lemma} \label{lem:specsignhypcube} Every hypercube $Q^d$, $d \ge 1$, admits a magnetic potential which assumes the value $-1$ on each $4$-cycle. Moreover, any two magnetic potentials of this type are gauge equivalent.  
\end{lemma}

\begin{proof} The first statement follows by Induction over $d$. Any magnetic potential on $Q^1=K_2$ satisfies trivially the required property, since there are no $4$-cycles. For general $Q^d$, $d \ge 2$, we use the fact that $Q^d=K_2 \times Q^{d-1}$, and we apply the later Proposition \ref{prop:maxregbipartite} in Subsection \ref{subsec:uppcombbounds}.

For the second statement, let $\sigma_1, \sigma_2: E^{or}(Q^d) \to S^1$ be two magnetic potentials on $Q^d$ with the required property. Since the cycle space of $Q^d$ is generated by its $4$-cycles (see \cite[Corollary 2]{P-V22}), $\sigma_1$ and $\sigma_2$ agree on all cycles. Since the cycle space is also generated by the fundamental cycles of a spanning tree, the uniquely determined potentials $\sigma_j': E^{or}(Q^d) \to S^1$, $j=1,2$, which are gauge equivalent to $\sigma_j$, respectively, and trivial on $T$, must coincide.  
Therefore, $\sigma_1$ and $\sigma_2$ are gauge equivalent. 
\end{proof}

The next proposition states that, similarly as in the case of complete graphs, all maximal potentials of hypercubes are gauge equivalent, and we have the following result regarding their magneto-spectral heights.

\begin{proposition} \label{prop:nuhypercube} The magneto-spectral height of the hypercube $Q^d$, $d \ge 2$, is given by
$$ \nu(Q^d) = d - \sqrt{d}. $$
Moreover, the maximal potentials are precisely those described in Lemma \ref{lem:specsignhypcube}. In particular, any two maximal potentials of $Q^d$ are gauge equivalent.
\end{proposition}

The proof of this statement is a consequence of Proposition \ref{prop:regbipartuppbd} given in Subsection \ref{subsec:uppcombbounds}.

\begin{proof} Note that $Q^d$ is a $d$-regular bipartite graph. Therefore, we have
$$ \nu(G) \le d - \sqrt{d}, $$
by Proposition \ref{prop:regbipartuppbd} below.
Moreover, $Q^d$ admits a magnetic potential $\sigma: E^{or}(Q^d) \to S^1$ which assumes the value $-1$ on all $4$-cycles, by Lemma \ref{lem:specsignhypcube}. An equivalent description of magnetic potentials of this type on $Q^d$ is condition \eqref{eq:sigmasigma} in Proposition \ref{prop:regbipartuppbd} which, in turn, is equivalent to $\lambda_1^\sigma(G) = d-\sqrt{d}$. Therefore, we have 
$$ d - \sqrt{d} = \lambda_1^\sigma(G) \le \nu(G) \le d-\sqrt{d} $$
for precisely this type of signatures. Since only potentials of this type are maximal, any two maximal potentials must be gauge equivalent, by Lemma \ref{lem:specsignhypcube}.
\end{proof}

\begin{example}[Complete bipartite graphs] \label{ex:complbipart} The magneto-spectral height of the $d$-regular complete bipartite graph $K_{d,d}$, $d\ge 1$, is given by
(see Corollary \ref{cor:Kddmagnetospecheight} below and its proof)
$$ \nu(K_{d,d}) = d -\sqrt{d}. $$
\end{example}

\subsection{Cospectral graphs}
\label{subsec:cospectral}

\begin{example}[Wheel graphs and cospectrality] \label{ex:wheelcospec}
It is known (see \cite{ZLY09}) that wheel graphs $W_n$, $n \neq 6$, are uniquely determined by their Laplace spectrum. However, there exists a cospectral graph $\widehat G$ for the wheel graph $W_6$. Both graphs are illustrated in Figure \ref{fig:cospectral}. We will show below that these two graphs have different magneto-spectral heights.

The matrix representations of the standard Laplacian on $W_6$ is of the form
$$ \Delta_{W_6} \cong 
\begin{pmatrix} 
6 & -1 & -1 & -1 & -1 & -1 & -1 \\
-1 & 3 & -1 & 0 & 0 & 0 & -1 \\ 
-1 & -1 & 3 & -1 & 0 & 0 & 0 \\ 
-1 & 0 & -1 & 3 & -1 & 0 & 0 \\ 
-1 & 0 & 0 & -1 & 3 & -1 & 0 \\ 
-1 & 0 & 0 & 0 & -1 & 3 & -1 \\ 
-1 & -1 & 0 & 0 & 0 & -1 & 3\end{pmatrix}, $$
and its spectrum is given by
\begin{center}
\begin{tabular}{ccccccc}
$\lambda_1$ & $\lambda_2$ & $\lambda_3$ & $\lambda_4$ & $\lambda_5$ & $\lambda_6$ & $\lambda_7$ \\
\hline
$0$ & $2$ & $2$ & $4$ & $4$ & $5$ & $7$
 \end{tabular}
\end{center}
\begin{figure}
\begin{center}
\begin{tikzpicture}[thick,scale=1]
\filldraw [black] (0,1) circle (2pt);
\filldraw [black] (1,0) circle (2pt);
\filldraw [black] (1,2) circle (2pt);
\filldraw [black] (2,1) circle (2pt);
\filldraw [black] (3,0) circle (2pt);
\filldraw [black] (3,2) circle (2pt);
\filldraw [black] (4,1) circle (2pt);
\filldraw [black] (6,0) circle (2pt);
\filldraw [black] (7,0) circle (2pt);
\filldraw [black] (7,2) circle (2pt);
\filldraw [black] (8,0) circle (2pt);
\filldraw [black] (9,0) circle (2pt);
\filldraw [black] (9,2) circle (2pt);
\filldraw [black] (10,0) circle (2pt);
\draw (2,-1) node[label=below:{\LARGE$W_6$}]{};
\draw (8,-1) node[label=below: {\LARGE$\widehat G$}]{};
\draw (0,1) node[label=left:$x_7$]{};
\draw (1,0) node[label=below:$x_6$]{};
\draw (1,2) node[label=above:$x_2$]{};
\draw (2,1) node[label=below:$x_1$]{};
\draw (3,0) node[label=below:$x_5$]{};
\draw (3,2) node[label=above:$x_3$]{};
\draw (4,1) node[label=right:$x_4$]{};
\draw (0,1) -- (1,0);
\draw (0,1) -- (2,1);
\draw (0,1) -- (1,2);
\draw (0,1) -- (1,2);
\draw (1,0) -- (3,0);
\draw (1,0) -- (2,1);
\draw (1,2) -- (3,2);
\draw (1,2) -- (2,1);
\draw (2,1) -- (3,0);
\draw (2,1) -- (4,1);
\draw (2,1) -- (3,2);
\draw (3,0) -- (4,1);
\draw (3,2) -- (4,1);
\draw (6,0) node[label=below:$x_3$]{};
\draw (7,0) node[label=below:$x_4$]{};
\draw (7,2) node[label=above:$x_1$]{};
\draw (8,0) node[label=below:$x_5$]{};
\draw (9,0) node[label=below:$x_6$]{};
\draw (9,2) node[label=above:$x_2$]{};
\draw (10,0) node[label=below:$x_7$]{};
\draw [red] (7,2) -- (6,0);
\draw [red] (7,2) -- (7,0);
\draw [red] (7,2) -- (8,0);
\draw [red] (9,2) -- (8,0);
\draw [red] (9,2) -- (9,0);
\draw [red] (9,2) -- (10,0);
\draw [blue] (6,0) -- (7,0);
\draw [blue] (6,0) -- (9,2);
\draw [blue] (7,0) -- (9,2);
\draw [blue] (10,0) -- (9,0);
\draw [blue] (10,0) -- (7,2);
\draw [blue] (9,0) -- (7,2);
\end{tikzpicture}
\end{center}
\caption{The two isospectral graphs $W_6$ and $\widehat G$\label{fig:cospectral}}
\end{figure}
\FloatBarrier
The standard Laplacian $\Delta_{\widehat G}$ on $\widehat G$ has the same spectrum, and the matrix representation of the signed Laplacian $\Delta_{\widehat G}^\sigma$ with signature $\sigma: E^{or}(\widehat G) \to \{-1,1\}$ assuming the value $+1$ on the red edges and the value $-1$ on the blue edges in Figure \ref{fig:cospectral} is of the form
$$ \Delta_{\widehat G}^\sigma \cong \begin{pmatrix} 
5 & 0 & -1 & -1 & -1 & 1 & 1 \\
0 & 5 & 1 & 1 & -1 & -1 & -1 \\ 
-1 & 1 & 3 & 1 & 0 & 0 & 0 \\ 
-1 & 1 & 1 & 3 & 0 & 0 & 0 \\ 
-1 & -1 & 0 & 0 & 2 & 0 & 0 \\ 
1 & -1 & 0 & 0 & 0 & 3 & 1 \\ 
1 & -1 & 0 & 0 & 0 & 1 & 3\end{pmatrix}. $$
The spectrum of $\Delta_{\widehat G}^\sigma$ is given by
\begin{center}
\renewcommand*{\arraystretch}{1.4}
\begin{tabular}{ccccccc}
$\lambda_1^\sigma$ & $\lambda_2^\sigma$ & $\lambda_3^\sigma$ & $\lambda_4^\sigma$ & $\lambda_5^\sigma$ & $\lambda_6^\sigma$ & $\lambda_7^\sigma$ \\
\hline
$\frac{7}{2} - \frac{\sqrt{17}}{2}$ & $\frac{9}{2}- \frac{\sqrt{33}}{2}$ & $2$ & $2$ & $4$ & $\frac{7}{2} + \frac{\sqrt{17}}{2}$ & $\frac{9}{2} + \frac{\sqrt{33}}{2}$
 \end{tabular}
\renewcommand*{\arraystretch}{1} 
\end{center}
Note that we have
$$ \nu(W_6) = 3-2\cos(\pi/6) \approx 1.2679 < 1.4384 \approx \frac{7}{2} - \frac{\sqrt{17}}{2} \le \nu(\widehat G). $$
This shows that the cospectral graphs $W_6$ and $\widehat G$ can be distinguished by the magneto-spectral height.
\end{example}

In the above example, two non-isomorphic cospectral graphs are distinguished by their magneto-spectral heights. A natural question is whether there are also examples of non-isomorphic graphs which are cospectral and have the same magneto-spectral height? This question is similar in spirit as the topic in \cite{CP23} for pairs of Riemannian manifolds, which deals with the question whether higher-dimensional twisted isospectrality of pairs of manifolds over the same base manifold implies already that they are isometric. We provide such a pair of graphs in the following example.

\begin{example}[Cospectral trees] \label{ex:cospectrees}
    There exist two non-isomorphic and cospectral trees, denoted as $T_1$ and $T_2$, which both contain $17$ vertices, as shown in Figure \ref{fig:two_non-isomorphci_and_cospectral_trees}. The 
    characteristic polynomial of both Laplacians $\Delta_{T_1}$ and $\Delta_{T_2}$ is the following:
    \begin{multline*} 
    t(t - 1)(t^2 - 3t + 1)(t^3 - 6t^2 + 8t - 1)\cdot \\ (t^10 - 22t^9 + 205t^8 - 1056t^7 + 3293t^6 - 6402t^5 + 7708t^4 - 5522t^3 + 2156t^2 - 380t + 17), 
    \end{multline*}
    and the eigenvalues of both Laplacians are given by
\begin{multline*}
 \{ 0, 0.065, 0.139, 0.382, 0.391, 0.665, 1, 1.216, 1.522, 1.746, 2.441, 2.618, 3.075, \\ 3.396, 4.115, 4.422, 4.807 \}.
\end{multline*}
    Since both graphs are trees, their magneto-spectral heights are 
    $$\nu(T_1) = \nu(T_2) = 0,$$
    respectively. This example is given in \cite[Figure 2]{HOS01} and was found by McKay \cite{McK77}.
\end{example}

\begin{figure}[h]
\begin{center}
\includegraphics[width=\textwidth]{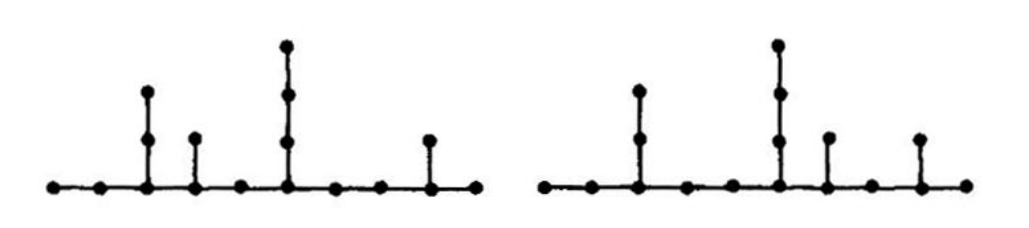}
\end{center}
\caption{Two non-isomorphic and cospectral trees $T_1$ and $T_2$
\label{fig:two_non-isomorphci_and_cospectral_trees}}
\end{figure}
\FloatBarrier

It remains to find examples of cospectral graphs whose magneto-spectral heights agree and are also \emph{non-zero}. Two different methods for the construction of cospectral graphs are due to Sunada and via Seidel switching. With regards to Sunada's method, we considered the cospectral pair $G_1,G_2$ of quotients of a Cayley graph $G$ of the semidirect product $\mathbb{Z}_8^* \ltimes \mathbb{Z}_8$, as described in \cite[Example 11.4.5]{Bu10} and \cite[Example 1]{HH99}. Both graphs have $8$ vertices and vertex degrees between $2$ and $4$, and one of them (graph $G_2$, see \cite[Figure 1]{HH99}) is a multi-graph with a double edge. We computed lower bounds for $\nu(G_j)$, $j=1,2$ via a refined grid search over the magnetic potentials, and our numerics suggest that their magneto-spectral heights are different:
$$ \nu(G_1) \ge 1.14 \quad \text{and} \quad \nu(G_2) \ge 1.25. $$
We considered also the pair $G_1,G_2$ of cospectral $4$-regular graphs with $8$ vertices, obtained via Seidel switching and illustrated in Figures 3 and 4 of \cite{BGG98}, respectively. They are multi-graphs, since they both have
two loops. Our numerics indicate again that they have different magneto-spectral heights:
$$ \nu(G_1) \ge 2.24 \quad \text{and} \quad \nu(G_2) \ge 2.39. $$
Another challenging example of cospectral graphs to investigate could be pairs strongly regular graphs with the same parameters. For example, the Shrikhande graph and the $4 \times 4$ rooks graphs (see, e.g., \cite[Example 11.4]{CLP17}) fall into this category with parameters $(16,6,2,2)$. However, since they are both $6$-regular graphs with $16$ vertices, they are likely to be too large to tackle their magneto-spectral heights numerically. 

\subsection{Some Counterexamples}
\label{subsec:counterex}

\begin{question}\label{quest:nu_multiplicity2}
The $\lambda_1$-muliplicity considerations in Examples \ref{ex:cycle} and \ref{ex:completegraph} motivates the question whether the $\lambda_1^{\hat \sigma}(G)$-eigenspace of a maximal magnetic potential $\hat \sigma$ has always dimension $\ge 2$ if $\nu(G) > 0$. Example \ref{quest:nu_multiplicity2} shows that this is wrong.
\end{question}

\begin{figure}[h]
\begin{center}
\begin{tikzpicture}[thick,scale=1]
\filldraw [black] (0,0) circle (2pt);
\filldraw [black] (2,2) circle (2pt);
\filldraw [black] (4,0) circle (2pt);
\filldraw [black] (5,2.2) circle (2pt);
\draw (5,2.2) node[label=above:$x_1$]{};
\draw (2,2) node[label=above:$x_2$]{};
\draw (0,0) node[label=below:$x_3$]{};
\draw (4,0) node[label=below:$x_4$]{};
\draw (1,1) node[label=above:$+1$]{};
\draw (2,0) node[label=below:{$\sigma_t(x_3,x_4)=e^{it}$}]{};
\draw (3,1) node[label=right:$+1$]{};
\draw (3.7,2.1) node[label=above:$+1$]{};
\draw (0,0) -- (2,2);
\draw (0,0) -- (4,0);
\draw (2,2) -- (4,0);
\draw (2,2) -- (5,2.2);
\end{tikzpicture}
\end{center}
\caption{A counterexample to Question \ref{quest:nu_multiplicity2} \label{fig:triangle_with_dangling_edge} with potential $\sigma_t$}
\end{figure}
\FloatBarrier

\begin{example}[Counterexample to Question \ref{quest:nu_multiplicity2}]\label{example:suspension}
    We consider a triangle with a dangling edge added, as shown in Figure \ref{fig:triangle_with_dangling_edge}. The magnetic potential is determined by one parameter $t\in[0,2\pi]$ and denoted as $\sigma_t$, and the Laplacian matrix is of the form
$$\Delta^{\sigma_t}\cong
\begin{pmatrix}
    1 & -1 & 0 & 0 \\
    -1 & 3 & -1 & -1 \\
    0 & -1 & 2 & -e^{it} \\
    0 & -1 & -e^{-it} & 2
\end{pmatrix}.$$
Hence, the characteristic polynomial is given by
$$(\lambda-1)(\lambda^3-7\lambda^2+12\lambda-2(1-\cos(t))).$$
The maximal smallest root appears at the choice $t=\pi$, see the eigenvalues picture in Figure \ref{fig:plot_eigenvalues}, in which case we have the characteristic polynomial as follows:
$$(\lambda-1)(\lambda^3-7\lambda^2+12\lambda-4)=(\lambda-1)(\lambda-2)(\lambda^2-5\lambda+2),$$
and $\nu(G) =\frac{5-\sqrt{17}}{2}\approx 0.438447187$. In addition, this figure shows that a maximal magnetic potential of a triangle with a dangling edge added is the anti-balanced signature $\overline{\sigma}\equiv -1$ and the smallest eigenvalue $\lambda^{\overline{\sigma}}_{1}(G)$ has multiplicity $1$.

\begin{figure}
\begin{center}
\includegraphics[width=0.49\textwidth]{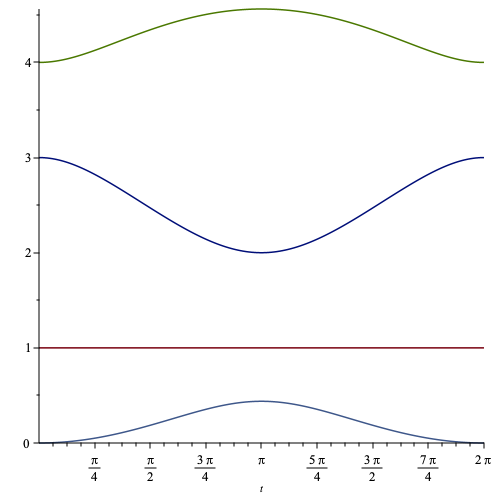}
\end{center}
\caption{The functions $t \mapsto \lambda_1^{\sigma_t}(G),\lambda_2^{\sigma_t}(G),\lambda_{3}^{\sigma_t}(G), \lambda^{\sigma_t}_4(G)$
\label{fig:plot_eigenvalues}}
\end{figure}
\FloatBarrier
\end{example}

\begin{question} \label{quest:specpot}
    Note that, in the examples $G=C_n$ and $G=K_n$, there exists a magnetic potential $\hat \sigma$ only assuming values $\pm 1$, which satisfies
    $$ \nu(G) = \lambda_1^{\hat \sigma}(G). $$
    Therefore, it is natural to ask whether, for any graph $G=(V,E)$, there exists a magnetic potential $\hat \sigma: E^{or}(G) \to \{-1,1\}$ satisfying
    $$ \lambda_1^{\hat \sigma}(G) = \nu(G). $$
    Example \ref{ex:countexquest} shows that this is not always the case. 
\end{question}    
    
\begin{example}[Counterexample to Question \ref{quest:specpot}] \label{ex:countexquest} 
    We consider a graph and a corresponding magnetic potential as illustrated in Figure \ref{fig:twotriangles}. Note that this graph is the suspension of a path of length $2$.  The magnetic potential is determined by two parameters $\alpha,\beta \in [0,2\pi]$ and denoted by $\sigma_{\alpha,\beta}$, and the Laplacian is given by the matrix
    $$ \Delta^{\sigma_{\alpha,\beta}} \cong \begin{pmatrix} 2 & -1 & 0 & -e^{i \alpha} \\ -1 & 3 & -1 & -1 \\ 0 & -1 & 2 & -e^{i \beta} \\ -e^{-i\alpha} & -1 & - e^{-i\beta} & 3 \end{pmatrix}.
    $$
    Its characteristic polynomial is given by
    $$ t^4 - 10 t^3 + 32 t^2 - (36-2(\cos\alpha+\cos\beta))t + 10 - 4(\cos\alpha+\cos\beta)-2\cos(\alpha-\beta). $$

\begin{figure}[h]
\begin{center}
\begin{tikzpicture}[thick,scale=1]
\filldraw [black] (0,3) circle (2pt);
\filldraw [black] (3,6) circle (2pt);
\filldraw [black] (3,0) circle (2pt);
\filldraw [black] (6,3) circle (2pt);
\draw (1,4.5) node[label=above:$+1$]{};
\draw (5,4.5) node[label=above:$+1$]{};
\draw (3,3) node[label=right:$+1$]{};
\draw (1,1.5) node[label=below:$e^{i \alpha}$]{};
\draw (5,1.5) node[label=below:$e^{i \beta}$]{};
\draw (3,6) node[label=above:$x_2$]{};
\draw (0,3) node[label=left:$x_1$]{};
\draw (3,0) node[label=below:$x_4$]{};
\draw (6,3) node[label=right:$x_3$]{};
\draw (3,0) -- (3,6);
\draw (0,3) -- (3,0);
\draw (3,0) -- (6,3);
\draw (0,3) -- (3,6);
\draw (3,6) -- (6,3);
\end{tikzpicture}
\end{center}
\caption{A counterexample to Question \ref{quest:specpot} \label{fig:twotriangles}}
\end{figure}

\FloatBarrier
    
    The function $(\alpha,\beta) \mapsto \lambda_1^{\sigma_{\alpha,\beta}}(G)$ is illustrated in Figure \ref{fig:plot-twotriangles}. For example, we have
    the following eigenvalues for all magnetic potentials only assuming values $\{-1,1\}$:
    \begin{align*}
    \lambda_1^{\sigma_{0,0}}(G) &= 0, \\
    \lambda_1^{\sigma_{0,\pi}}(G) &= \lambda_1^{\sigma_{\pi,0}}(G) = 0.5857864376, \\
    \lambda_1^{\sigma_{\pi,\pi}} (G)&= 0.7639320225.
    \end{align*}
      However, we have
    $$ \lambda_1^{\sigma_{2\pi/3,4\pi/3}}(G) = \lambda_1^{\sigma_{4\pi/3,2\pi/3}}(G) = 1 \stackrel{(*)}{=} \nu(G). $$
\begin{figure}
\begin{center}
\includegraphics[width=0.4\textwidth]{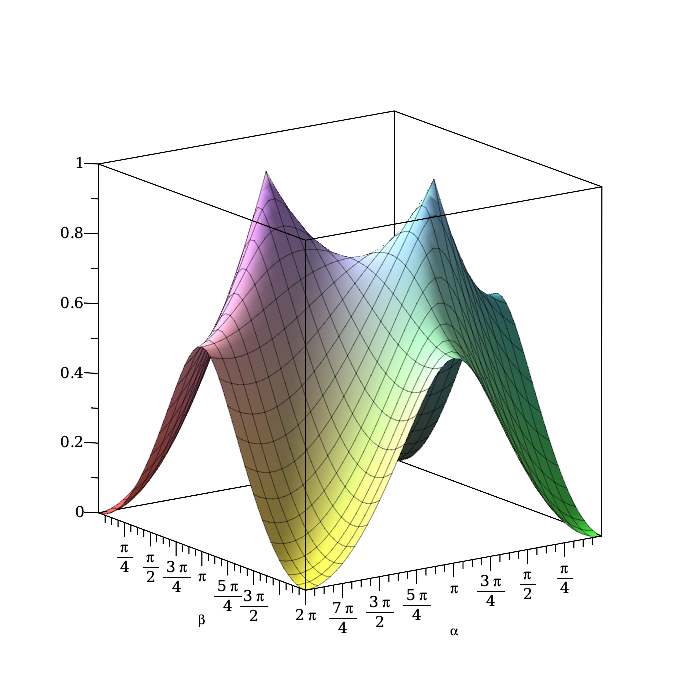}
\includegraphics[width=0.4\textwidth]{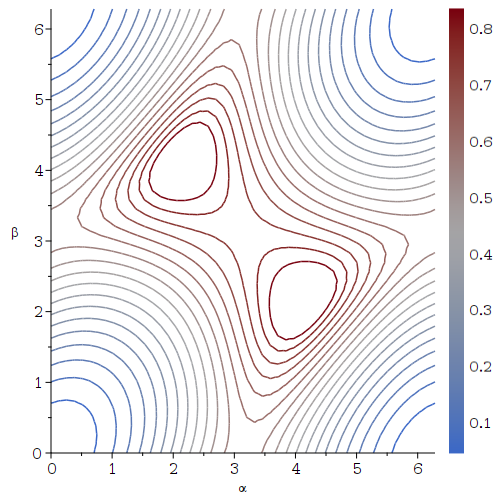}
\end{center}
\caption{The function $(\alpha,\beta) \mapsto \lambda_1^{\sigma_{\alpha,\beta}}(G)$ \label{fig:plot-twotriangles}}
\end{figure}
\FloatBarrier
  
Note that $(*)$ follows from Theorem \ref{thm:suspgraph} below. Therefore, the following choices of potentials are maximal:
    \begin{equation} \label{eq:twotrianglespotmax}
    \sigma(x_1,x_2) = \sigma(x_2,x_3) = \sigma(x_2,x_4) = 1, \,\,\,
    \sigma(x_1,x_4) = \sigma(x_4,x_3) = {\xi_3}^j
    \end{equation}
    with $j=1,2$ and $\xi_3 = e^{i \frac{2\pi}{3}}$, and none of the magnetic potentials $\sigma$ with values $\{-1,1\}$ is maximal.

\end{example}

\begin{question}\label{quest:nu_degree}
Proposition \ref{prop:combupbd} below states that $\nu(G)$ is bounded above by the average vertex degree 
$$ \frac{1}{|V|} \sum_{x \in V} d_x = \frac{2|E|}{|V|}. $$
It is therefore natural to ask, whether $\nu(G)$ is also bounded below  by the average degree of a graph $G$, that is, whether there exist a universal constant $c >0$ such that 
\begin{equation} \label{eq:nuaverdeg}
\frac{c}{|V|}\sum_{x\in V}d_x\leq \nu(G).
\end{equation}
Example \ref{ex:avverdeg} answers this question negatively.
\end{question}

\begin{example}[Counterexample to Question \ref{quest:nu_degree}] 
\label{ex:avverdeg}
    A counterexample for the left hand side of the inequality \eqref{eq:nuaverdeg} are increasing circles $C_n$, since $\nu(C_n) \to 0$ as $n \to \infty$, while the average degree of these graphs is constant and equal to $2$.
\end{example}

\begin{question}\label{quest:antibalanced_max}
    Restricting to signatures, the following question arises: Does the smallest eigenvalue $\lambda^{\overline{\sigma}}_1(G)$ with anti-balanced signature $\overline{\sigma} \equiv -1$ attain the maximum among all the smallest eigenvalues $\{\lambda^{\sigma}_1\}$ corresponding to all signatures $\sigma:E^{or}(G) \to\{\pm1\}$? A counterexample is given in Example \ref{ex:antibalanced}.
\end{question}

\begin{example}[Counterexample to Question \ref{quest:antibalanced_max}] \label{ex:antibalanced}
Let $x_0,x_1,x_2,x_3$ be the vertices of the wheel graph $W_4$ in the cycle $C_4 \subset W_4$ and $x \in W_d$ be the suspension vertex. 
We choose the following signatures/magnetic potentials:
\begin{center}
\begin{tabular}{c|c|c|c|c|c|c|c|c}
$(u,v)$ & $(x,x_0)$ & $(x,x_1)$ & $(x,x_2)$ & $(x,x_3)$ & $(x_0,x_1)$ & $(x_1,x_2)$ & $(x_2,x_3)$ & $(x_3,x_0)$ \\ \hline 
$\overline{ \sigma}(u,v)$ & $-1$ & $-1$ & $-1$ & $-1$ & $-1$ & $-1$ & $-1$ & $-1$ \\ 
$\sigma_1(u,v)$ & $1$ & $1$ & $1$ & $1$ & $-1$ & $-1$ & $1$ & $-1$ \\
$\hat\sigma(u,v)$ & $-1$ & $e^{-i \pi/4}$ & $i$ & $e^{i \pi/4}$ & $1$ & $1$ & $-1$ & $1$
\end{tabular}
\end{center}
Note that $\overline{ \sigma}$ is the anti-balanced signature, while $\sigma_1$ is a non-equivalent unbalanced signature. Maple yields the following results:
\begin{align*}
\lambda_1^{\overline{\sigma}}(W_4) &= 1, \\
\lambda_1^{\sigma_1}(W_4) &= \frac{10}{3} - \frac{\sqrt{19}}{3}\left( \sqrt{3} \cos\left(\frac{\arctan\left(\frac{330\sqrt{426}}{809}\right)}{6}\right) - \sin\left(\frac{\arctan\left(\frac{330\sqrt{426}}{809}\right)}{6}\right)
\right) \\
&\approx 1.238442816, \\
\lambda_1^{\hat \sigma}(W_4) &= 3-2\cos\left(\frac{\pi}{4}\right) = 1 + \nu(C_4) \approx 1.585786438.
\end{align*}
Since $\lambda_1^{\hat \sigma}(W_4) \ge \lambda_1^{\overline{\sigma}}(W_4)$, the maximum for $\lambda_1^\sigma(W_4)$ amongst all magnetic potentials is not assumed by the anti-balanced signature. Moreover, the unbalanced signature $\sigma_1$ provides a strictly larger first magnetic eigenvalue than the anti-balanced signature $\overline{\sigma}$.
\end{example}

\section{General results}
\label{sec:genresults}

\subsection{Upper combinatorial bounds}
\label{subsec:uppcombbounds}

We start this section with several combinatorial upper bounds of the magneto-spectral height. One of these bounds involves the \emph{first Betti number} $b_1(G)$ of a graph $G$, which is given by $|E|-|V|+k$, where $k$ is the number of connected components of $G$. Note that $b_1(G)$ is also called the \emph{dimension of the cycle space}. We also provide a sharp upper bound for regular bipartite graphs and show that graphs attaining this upper bound are closed under Cartesian products.

\begin{proposition} \label{prop:combupbd}
    Let $G=(V,E)$ be a graph. Then we have
    $$ \nu(G) \le \min\left\{ \frac{2|E|}{|V|}, \frac{4b_1(G)}{|V|}, 
    d_{\max}-1 \right\},$$
    where $d_{\max}$ is the maximal vertex of degree of $G$.
\end{proposition}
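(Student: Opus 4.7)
The plan is to prove the three upper bounds by three independent short arguments, each using only the variational description of $\lambda_1^\sigma(G)$, the basic trace identity for $\Delta^\sigma$, and the gauge remark after Definition~1.1. Each argument fixes an arbitrary magnetic potential $\sigma$, produces a bound on $\lambda_1^\sigma(G)$ that is uniform in $\sigma$, and then takes the supremum.

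For the average-degree bound $\nu(G) \le 2|E|/|V|$, I would use the trace. Since $\Delta^\sigma$ is self-adjoint and positive semi-definite on a space of dimension $|V|$, we have $\lambda_1^\sigma(G)\cdot|V| \le \mathrm{tr}(\Delta^\sigma)$. Evaluating $\Delta^\sigma$ on the standard indicator basis in \eqref{eq:magnLap} gives $\mathrm{tr}(\Delta^\sigma) = \sum_{x \in V} d_x = 2|E|$, independent of $\sigma$. For the maximum-degree bound $\nu(G) \le d_{\max}-1$, I would simply invoke the already established inequality \eqref{eq:nuGmindxdy} from Lemma~\ref{lem:upbdxdy} and observe that $\frac{d_x+d_y}{2}-1 \le d_{\max}-1$ for every edge $\{x,y\} \in E$.

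The Betti-number bound $\nu(G) \le 4 b_1(G)/|V|$ is the most interesting of the three, and my plan is to test $\RR^\sigma$ against the constant function $f \equiv 1$. Since the constant function has Rayleigh quotient $0$ when $\sigma \equiv 1$, the idea is to gauge $\sigma$ away as much as possible before applying the test. Fix any spanning forest $F \subset E$ of $G$ and replace $\sigma$ by a gauge-equivalent potential with $\sigma(x,y)=1$ for all $\{x,y\} \in F$; this is possible by the remark after Definition~1.1, and it leaves $\lambda_1^\sigma(G)$ unchanged. Because a spanning forest has exactly $|V|-k$ edges, where $k$ is the number of connected components of $G$, the complementary set $E \setminus F$ has cardinality $|E|-|V|+k = b_1(G)$. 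Plugging $f \equiv 1$ into $\RR^\sigma$ kills the contribution of every edge in $F$, and bounds each of the remaining $b_1(G)$ contributions by $|1-\sigma(x,y)|^2 \le 4$, yielding $\lambda_1^\sigma(G) \le \RR^\sigma(\mathbf{1}) \le 4 b_1(G)/|V|$ as required.

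I do not foresee any real obstacle: the three arguments are structurally parallel and rely only on material already recorded in Section~1. The only step that needs mild care is the gauge reduction in the Betti-number bound, where one uses that gauge equivalence preserves the spectrum of $\Delta^\sigma$ in order to assume $\sigma \equiv 1$ on a chosen spanning forest before testing with the constant function.
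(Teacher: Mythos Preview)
Your proposal is correct and follows essentially the same approach as the paper: the trace argument for $2|E|/|V|$ and the appeal to Lemma~\ref{lem:upbdxdy} for $d_{\max}-1$ are identical to the paper's. For the Betti-number bound the paper invokes the more general Theorem~\ref{thm:outdegree_upper_bound}(a) with $G_0=G$, whose proof builds a test function of constant modulus along a spanning tree via products of $\hat\sigma^{-1}$; your version---gauging $\sigma$ to be trivial on a spanning forest and then testing with the constant $\mathbf{1}$---is exactly the same computation after the gauge change, just stated more directly.
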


\begin{remark}
All three upper bounds in this proposition are relevant. Note that the upper bound $\frac{2|E|}{|V|}$ is the average vertex degree of the graph $G=(V,E)$. Figure \ref{fig:firstandsecondgraph} illustrates examples of graphs $G$ for which we have
$$ \frac{2|E|}{|V|} = \frac{2(2n-1)}{n} < \min\left\{\frac{4b_1(G)}{|V|}, d_{\max}-1\right\}, $$
(since $n \ge 6$, $\frac{4b_1(G)}{|V|}=4$ and $d_{\max}=n-2$)
and
$$ \frac{4b_1(G)}{|V|} = \frac{4(e-n+1)}{n} < \min\left\{\frac{2|E|}{|V|},d_{\max}-1\right\}, $$
(since $n \ge 5$, $e=|E|=2n-3$, $\frac{2|E|}{|V|}=4-\frac{6}{n}$ and
$d_{\max}=n-2$), respectively. Examples of graphs for which we have
$$ d_{\max}-1 < \min
\left\{\frac{2|E|}{|V|},\frac{4b_1(G)}{|V|}\right\}, $$
are all $d$-regular connected graphs with $d \ge 3$. 

\begin{figure}
\begin{center}
\begin{tikzpicture}[thick,scale=1]
\filldraw [black] (0,0) circle (2pt);
\filldraw [black] (1,0) circle (2pt);
\filldraw [black] (2,0) circle (2pt);
\filldraw [black] (3,0) circle (2pt);
\filldraw [black] (4,0) circle (2pt);
\filldraw [black] (4.7,0) circle (1pt);
\filldraw [black] (5,0) circle (1pt);
\filldraw [black] (5.3,0) circle (1pt);
\filldraw [black] (6,0) circle (2pt);
\filldraw [black] (7,0) circle (2pt);
\filldraw [black] (8,0) circle (2pt);
\filldraw [black] (9,0) circle (2pt);
\draw (0,0) node[label=below:$x_1$]{};
\draw (1,0) node[label=below:$x_2$]{};
\draw (2,0) node[label=below:$x_3$]{};
\draw (3,0) node[label=below:$x_4$]{};
\draw (4,0) node[label=below:$x_5$]{};
\draw (6,0) node[label=below:$x_{n-3}$]{};
\draw (7,0) node[label=below:$x_{n-2}$]{};
\draw (8,0) node[label=below:$x_{n-1}$]{};
\draw (9,0) node[label=below:$x_n$]{};
\draw (0,0) -- (1,0);
\draw (1,0) -- (2,0);
\draw (2,0) -- (3,0);
\draw (3,0) -- (4,0);
\draw (6,0) -- (7,0);
\draw (7,0) -- (8,0);
\draw (8,0) -- (9,0);
\draw (2,0) arc (0:180:1);
\draw (3,0) arc (0:180:1.5);
\draw (4,0) arc (0:180:2);
\draw (6,0) arc (0:180:3);
\draw (7,0) arc (0:180:3.5);
\draw (8,0) arc (0:180:4);
\draw (9,0) arc (0:180:4.5);
\draw [dashed] (9,0) arc (0:180:1);
\draw [dashed] (8,0) arc (0:180:1);
\end{tikzpicture}
\end{center}
\caption{ If the dashed circles are included and $n \ge 6$, the resulting graph $G$ satisfies $\frac{2|E|}{|V|} < \frac{4b_1(G)}{|V|}, d_{\max}-1$. If the dashed circles are dropped and $n \ge 5$, the resulting graph $G$ satisfies $\frac{4b_1(G)}{|V|} < \frac{2|E|}{|V|},d_{\max}-1$.
\label{fig:firstandsecondgraph}}
\end{figure}
\FloatBarrier
\end{remark}

\begin{proof}[Proof of Proposition \ref{prop:combupbd}]
The proof of $\nu(G) \le \frac{2|E|}{|V|}$ is a simple trace argument. For $x \in V$, let $d_x$ be the vertex degree of $x$. Then we have for every $\sigma: E^{or}(G) \to S^1$, 
$$ \lambda_1^\sigma(G) \le \frac{1}{|V|} \sum_{j=1}^{|V|} \lambda_j^\sigma(G) = \frac{1}{|V|} {\rm{Trace}}(\Delta^\sigma) = \frac{1}{|V|} \sum_{x \in V} d_x = \frac{2|E|}{|V|}. $$

The upper bound 
$\nu(G) \le \frac{4b_1(G)}{|V|}$ is a consequence of the more general Theorem \ref{thm:outdegree_upper_bound}(a) in the next subsection, where we choose $G_0=G$, which implies $d_x^{out}=0$ for any $x\in G$.

Finally, the upper bound $d_{\max}-1$ is a direct consequence of \eqref{eq:nuGmindxdy} in Lemma \ref{lem:upbdxdy}.

\end{proof}

For regular bipartite graphs, we have the following upper bound.

\begin{proposition} \label{prop:regbipartuppbd} Let $G=(V,E)$ be a regular bipartite graph of vertex degree $d$. Then we have
\begin{equation} \label{eq:dsqrtd}
\nu(G) \le d-\sqrt{d}. 
\end{equation}
Moreover, we have the following equivalences for potentials $\sigma: E^{or}(G) \to S^1§$ in $d$-regular bipartite graphs:
\begin{itemize}
\item[(i)] $\lambda_1^\sigma(G) = d-\sqrt{d}$, that is, $\sigma$ is a maximal potential;
\item[(ii)] $(A^\sigma)^2 = d \cdot {\rm{Id}}$;
\item[(iii)] $\Delta^\sigma$ has precisely two eigenvalues $d \pm \sqrt{d}$, each with multiplicity $|V|/2$;
\item[(iv)] for any distance-$2$ vertices $x,y \in V$:
\begin{equation}
\label{eq:sigmasigma} 
\sum_{z: x \sim z \sim y} \sigma(x,z)\sigma(z,y) = 0. 
\end{equation}
\end{itemize}
In the above equivalences, the Hermitian \emph{magnetic adjacency matrix} $A^\sigma$ is defined to be $A^\sigma = d \cdot {\rm{Id}} - \Delta^\sigma$ with respect to any enumeration of the vertices.  
\end{proposition}

\begin{proof} Let $G=(V,E)$ be regular and bipartite and $\sigma: E^{or}(G) \to S^1$ be a potential. 
We first prove the upper bound \eqref{eq:dsqrtd}. 
The following arguments uses the matrix representation of the Laplacian $\Delta^\sigma$ (with respect to a fixed enumeration of the vertices). Let $A^\sigma$ be the Hermitian magnetic adjacency matrix
and 
$$ \lambda_1(A^\sigma) \le \lambda_2(A^\sigma) \le \cdots \le \lambda_n(A^\sigma) $$
be its eigenvalues with $n=|V|$. By the bipartiteness of $G$, we have 
\begin{equation} \label{eq:Aomegasymm} 
\lambda_{n-j+1}(A^\sigma) = - \lambda_j(A^\sigma) 
\end{equation}
for all $1 \le j \le n$. In particular, we have $\lambda_n(A^\sigma) \ge 0$.
Since the diagonal entries of $(A^\sigma)^2$ are the degrees of the respective vertices, we have
\begin{equation} \label{eq:ndnlambdan} 
n \cdot d = {\rm{trace}}\left[(A^\sigma)^2\right] = \sum_{j=1}^n \lambda_j((A^\sigma)^2), 
\end{equation}
and therefore, using the symmetry \eqref{eq:Aomegasymm},
$$ n \cdot d \le n \lambda_n((A^\sigma)^2). $$
This implies
$\lambda_n(A^\sigma) \ge \sqrt{d}$, which leads to 
$$ \lambda_1^\sigma(G) = d - \lambda_n(A^\sigma) \le d - \sqrt{d}. $$
Since $\sigma$ was arbitrary, this completes the proof of \eqref{eq:dsqrtd}.

Now we prove the equivalences in the proposition:
\begin{description}
\item[\bf(i) $\Rightarrow$ (ii)] It follows from $\lambda_1^\sigma(G) = d-\sqrt{d}$ that
$\lambda_{\max}(A^\sigma) = \sqrt{d}$. Since $G$ is bipartite, we have $\lambda_{\min}(A^\sigma)= -\sqrt{d}$, and therefore $\lambda_j((A^\sigma)^2) \le d$ for all $j \in \{1,\dots,n\}$. By \eqref{eq:ndnlambdan}, this implies $\lambda_j((A^\sigma)^2) = d$ for all $j$, which means (ii).
\item[\bf(ii) $\Rightarrow$ (iii)] $(A^\sigma)^2=d \cdot {\rm{Id}}$ implies, together with bipartiteness of $G$, that $A^\sigma$ has precisely the two eigenvalues $\pm \sqrt{d}$ with the same multiplicity $|V|/2$. (iii) follows then directly from $\Delta^\sigma = D - A^\sigma = d \cdot {\rm{id}} - A^\sigma$.
\item[\bf(iii) $\Rightarrow$ (i)] This implication is trivial.
\item[\bf(ii) $\Leftrightarrow$ (iv)] Since any off-diagonal entry of $(A^\sigma)^k$ coincides with the sum of the potentials of all walks of length $k$ between the corresponding vertices, the only non-vanishing off-diagonal entries of $(A^\sigma)^2$ of a bipartite graph correspond to pairs of distance-$2$ vertices. Therefore, condition \eqref{eq:sigmasigma} for all distance-$2$ vertices in a $d$-regular bipartite graph is equivalent to $(A^\sigma)^2 = d \cdot {\rm{Id}}$.
\end{description}
\end{proof}

\begin{proposition} \label{prop:maxregbipartite}
  Let $G_j = (V_j,E_j)$, $j=1,2$, be two regular bipartite graphs of vertex degrees $d_j$, respectively, satisfying
  $$ \nu(G_j) = d_j - \sqrt{d_j}. $$
  Then the Cartesian product $G = G_1 \times G_2$ is also a $d$-regular bipartite graph with $d=d_1+d_2$ and
  $$ \nu(G) = d - \sqrt{d}. $$
\end{proposition}

\begin{proof}
Let $G_j$, $j=1,2$ be as in the proposition and $\sigma_j: E^{or}(G_j) \to \{\pm 1\}$ be two maximal potentials satisfying \eqref{eq:sigmasigma} in Proposition \ref{prop:regbipartuppbd}, that is, we have for any distance-2 vertices in $V_j$:
$$ \sum_{z: x \sim z \sim y} \sigma_j(x,z)\sigma_j(z,y) = 0. $$
Let $V_1 = V_1^{+1} \cup V_1^{-1}$ be a vertex partition of the bipartite graph $G_1$ such that adjacent vertices of $G_1$ lie in different subsets.  

Now, we construct the following potential $\hat \sigma: E^{or}(G) \to \{ \pm 1 \}$ on the Cartesian product $G=G_1 \times G_2$: 
\begin{align}
\hat \sigma((x_1,y),(x_2,y)) &= \sigma_1(x_1,x_2) \quad \text{for $x_1 \sim_{G_1} x_2$ and $y \in V_2$ arbitrary,} \\
\hat \sigma((x,y_1),(x,y_2)) &= \epsilon \cdot \sigma_2(y_1,y_2) \quad \text{for $x \in V_1^\epsilon$, $\epsilon \in \{\pm 1\}$ and $y_1 \sim_{G_2} y_2$.}
\end{align}
To verify that $\hat \sigma$ on $G$ satisfies also \eqref{eq:sigmasigma}, we distinguish the following  three cases for any pair $(x_1,y_1), (x_2,y_2)$ of distance-$2$ vertices in $G$:
\begin{itemize}
\item[(i)] $d_{G_1}(x_1,x_2)=2$ and $y=y_1=y_2$; here all $2$-paths stay in $V_1 \times \{y\}$;
\item[(ii)] $x=x_1=x_2$ and $d_{G_2}(y_1,y_2)=2$; here all $2$-paths stay in $\{x\} \times V_2$;
\item[(iii)] $x_1 \sim_{G_1} x_2$ and $y_1 \sim_{G_2} y_2$ and the only $2$-paths between $(x_1,y_1)$ and $(x_2,y_2)$ are $(x_1,y_1) \sim_G (x_1,y_2) \sim_G (x_2,y_2)$ and $(x_1,y_1) \sim_G (x_1,y_1) \sim_G (x_2,y_2)$.
\end{itemize}
It is easy to see that $\hat \sigma$ satisfies \eqref{eq:sigmasigma} in all three cases (i)-(iii). 
It follows from Proposition \ref{prop:regbipartuppbd} that $\hat \sigma$ is a maximal potential of $G$ satisfying
$$ \nu(G) = \lambda^{\hat \sigma}(G) = d-\sqrt{d}. $$
\end{proof}

\subsection{Upper bounds in terms of induced subgraphs}

In this section, we present the following relations between $\nu(G)$ and properties of  subgraphs $G_0$. 

\begin{theorem}\label{thm:outdegree_upper_bound}
Let $G=(V,E)$ be a graph and $G_0=(V_0,E_0)$ be a non-empty induced subgraph of $G$. Let $d_x^{out}:=|\{y\in V \setminus V_0:y\sim x \}|$ be the \emph{out-degree} of $x\in V_0$. Then $\nu(G)$ has the following upper bounds.
\begin{itemize}
\item[(a)] We have    \begin{align}\label{eq:outdegree_upper_bound}
    \nu(G)\leq\frac{4b_1(G_0)}{|V_0|}+\frac{1}{|V_0|}\sum_{x\in V_0}d_{x}^{out}.
    \end{align}
\item[(b)] We have 
\begin{equation} \label{eq:nuGnuG0weak} 
\nu(G) \leq \nu(G_0) + \max_{x \in V_0} d_x^{out}.
\end{equation}
\end{itemize}
If there exists an eigenfunction $f \in C(V_0,\C)$ of the smallest eigenvalue of a maximal magnetic potential of $G_0$ with constant modulus $|f|$, then we have
\begin{equation} \label{eq:nuGnuG0} 
\nu(G) \leq \nu(G_0) + \frac{1}{|V_0|}\sum_{x\in V_0}d_{x}^{out}. \end{equation}
\end{theorem}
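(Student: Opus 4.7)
The plan is to prove all three bounds by the Rayleigh quotient variational principle applied to suitable test functions on $V$, extended by zero outside $V_0$.

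For part (a), I would first exploit gauge invariance. Since any magnetic potential on a forest can be gauged to be trivial, I choose a spanning forest $F$ of $G_0$ and construct a gauge function on $V_0$ making $\sigma$ trivial on $F$; extending this gauge by $1$ on $V \setminus V_0$ gives a global gauge transformation that does not affect $\lambda_1^\sigma(G)$, so I may assume $\sigma \equiv 1$ on $F$. Only the $b_1(G_0) = |E_0|-|V_0|+k$ edges of $G_0$ outside $F$ then carry nontrivial $\sigma$, where $k$ is the number of components of $G_0$. Using the test function $f = \mathbf{1}_{V_0}$, the numerator of $\RR^\sigma(f)$ contributes $0$ on $F$, at most $|1-\sigma(x,y)|^2 \le 4$ on each of the $b_1(G_0)$ remaining internal edges, and exactly $1$ on each boundary edge $x \sim y$ with $x \in V_0$, $y \notin V_0$; the denominator is $|V_0|$, which yields the claimed bound.

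For part (b), writing $\sigma_0 = \sigma|_{G_0}$, let $g: V_0 \to \C$ be an eigenfunction of $\Delta^{\sigma_0}_{G_0}$ to $\lambda_1^{\sigma_0}(G_0)$, and extend $g$ by zero on $V \setminus V_0$ to a test function $\tilde g$. The Rayleigh quotient splits as
$$\RR^\sigma(\tilde g) = \lambda_1^{\sigma_0}(G_0) + \frac{\sum_{x \in V_0} d_x^{out}\,|g(x)|^2}{\sum_{x \in V_0}|g(x)|^2},$$
because internal edges contribute exactly $\lambda_1^{\sigma_0}(G_0) \sum_{x \in V_0}|g(x)|^2$ by the eigenvalue equation, while each boundary edge $x \sim y$ with $x \in V_0$, $y \notin V_0$ contributes $|g(x)|^2$. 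Bounding the weighted average by $\max_{x \in V_0} d_x^{out}$ and using $\lambda_1^{\sigma_0}(G_0) \le \nu(G_0)$ gives (b).

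For the moreover statement, the key observation is that the weighted average $\sum d_x^{out}|g(x)|^2 / \sum |g(x)|^2$ collapses to the uniform average $\frac{1}{|V_0|}\sum d_x^{out}$ precisely when $|g|$ is constant on $V_0$. Under the hypothesis, the maximal potential $\sigma_0^*$ on $G_0$ admits a constant-modulus first eigenfunction $f_0$, and the plan is to replicate the argument of (b) with $f_0$ playing the role of $g$. Concretely, I would reduce to the case where $\sigma|_{G_0}$ is gauge-equivalent to $\sigma_0^*$: a gauge-pulled $f_0$ is then a constant-modulus $\lambda_1^{\sigma|_{G_0}}(G_0)$-eigenfunction, so the test-function argument of (b) delivers the strengthened bound. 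The main obstacle is the reduction step, which must leverage both the gauge freedom on $G$ restricting to $G_0$ and the constant-modulus property; in the intended applications (e.g.\ $G_0$ a cycle, as in Proposition \ref{prop:wheelnu}), every magnetic potential on $G_0$ admits a constant-modulus first eigenfunction, and the reduction is automatic.
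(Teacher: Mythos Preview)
Your arguments for (a) and (b) are correct and coincide with the paper's. For (a) the paper keeps the maximal potential $\hat\sigma$ fixed and builds a unimodular test function along a spanning tree of $G_0$ via products of $\hat\sigma^{-1}$; your version gauges $\hat\sigma$ to be trivial on the spanning forest and then uses $\mathbf 1_{V_0}$. These are the same argument up to a gauge transformation. For (b) your computation is identical to the paper's.

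For the ``moreover'' clause you correctly isolate the real difficulty, but your proposed fix does not work: a gauge transformation on $G$ can only move $\sigma|_{G_0}$ within its gauge-equivalence class, so you cannot in general arrange $\sigma|_{G_0}$ to be gauge-equivalent to a prescribed maximal potential $\sigma_0^*$ of $G_0$. The paper does not address this point either: its proof simply says that \emph{if} the eigenfunction $f$ of the restricted potential $\hat\sigma_0=\hat\sigma|_{G_0}$ used in (b) has constant modulus, then the boundary term becomes the uniform average. That condition is about $\hat\sigma_0$, not about a maximal potential of $G_0$, so the paper's proof and the theorem's hypothesis are not literally matched. In the applications that matter (trees and cycles, as you note), \emph{every} magnetic potential on $G_0$ admits a constant-modulus first eigenfunction, so the eigenfunction of $\hat\sigma_0$ automatically has constant modulus and both your argument and the paper's go through. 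In short, your proposal and the paper agree and share the same loose end in the general ``moreover'' statement; the gap is not in your reasoning relative to the paper but in the statement itself.
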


Note that the upper bound \eqref{eq:nuGnuG0} is generally stronger than both \eqref{eq:outdegree_upper_bound} and \eqref{eq:nuGnuG0weak}, since $\frac{4b_1(G_0)}{|V_0|} \ge \nu(G_0)$ and $\max_{x \in V_0} d_x^{out} \ge \frac{1}{|V_0|}\sum_{x \in V_0}d_x^{out}$. Moreover, the upper bound \eqref{eq:nuGnuG0} holds whenever we choose $G_0$ to be a tree or a cycle in $G$. In the special case that $G_0$ runs through all edges, we recover the upper bound \eqref{eq:nuGmindxdy} in Lemma \ref{lem:upbdxdy}.

\begin{proof}
    In the two statements (a) and (b) of the theorem, we choose different functions and calculate the corresponding Reighley quotients, respectively.
    \begin{itemize}
        \item [(a)] We first consider the case that $G_0$ is connected. Assume $T\subset G_0$ is a spanning tree of $G_0=(V_0,E_0)$. We choose $x_0$ as a base point of $T$. Let $\hat \sigma:E^{or}(G) \to S^1$ be a maximal magnetic potential of $G$. Then we choose the test function $f$ as follows: 
    $$f(z)=\begin{cases}
       1, & \text{if $z=x_0$,} \\\hat \sigma(x_0,x_1)^{-1}\hat \sigma(x_1,x_2)^{-1}\cdots\hat \sigma(x_{n-1},z)^{-1}, & \text{if $z\in V_0$,}\\
       0, & \text{if $z\notin V_0$.}
    \end{cases},$$
    where, in the second case, $x_0\sim x_1\sim \cdots x_{n-1}\sim z$ is the unique path from $x_0$ to $z$ on the spanning tree $T$. Then we have
    \begin{align*}\nu(G)=\lambda^{\hat \sigma}_1(G)\leq\RR^{\hat \sigma}(f)&=\frac{\sum_{\{x,y\}\in E}|f(x)-\hat\sigma(x,y)f(y)|^2}{\sum_{x\in V}|f(x)|^2}\\
    &=\frac{4(|E_0|-|V_0|+1)+\sum_{x\in v_0}d_x^{out}}{|V_0|}=\frac{4b_1(G_0)}{|G_0|}+\frac{1}{|V_0|}\sum_{x\in V_0}d_x^{out}.
    \end{align*}
    In the case that $G_0$ has more than one connected component, we choose base points in each of its components and argue accordingly.
    \item [(b)] Assume $\hat \sigma$ is a maximal magnetic potential on $G$. Denote the restriction of $\hat \sigma$ to the subgraph $G_0$ by $\hat \sigma_0$. Let $f$ be an eigenfunction on $G_0$ corresponding to the smallest eigenvalue $\lambda^{\hat \sigma_0}_1(G_0)$ and denote its trivial extension to $G$ by $\tilde{f}$. Then the Reighley quotient of $\tilde{f}$ is given as follows:
    \begin{align}
        \nu(G)&=\lambda_1^{\hat \sigma}(G) \leq \RR^{\hat \sigma}(\tilde{f}) \label{eq:corcase}\\ &=\frac{\sum_{\{x,y\}\in E}|\tilde{f}(x)-\hat \sigma(x,y)\tilde{f}(y)|^2}{\sum_{x\in V}|\tilde{f}(x)|^2}\notag\\
        &=\frac{\sum_{\{x,y\}\in E_0}|f(x)-\hat \sigma_0(x,y)f(y)|^2}{\sum_{x\in V_0}|f(x)|^2}+\frac{\sum_{x\in V_0}\sum_{y\notin V_0,x\sim y}|f(x)|^2}{\sum_{x\in V_0}|f(x)|^2}\label{eq:reighley_quotient_eigenfunction}\\
        &\leq\lambda^{\hat \sigma_0}_1(G_0)+\max_{x\in V_0}d^{out}_x\leq \nu(G_0)+\max_{x\in V_0}d_x^{out}.\notag
    \end{align}
    Moreover, in the special case where the norm $|f|$ is constant on $G_0$, the second term in \eqref{eq:reighley_quotient_eigenfunction} becomes the average out-degree, and we obtain the upper bound given in \eqref{eq:nuGnuG0}.
    \end{itemize}
\end{proof}

\begin{corollary} \label{cor:nudmin} For any finite graph $G=(V,E)$ without isolated vertices, we have
\begin{equation} \label{eq:nudmin} 
\nu(G) < d_{\rm{min}}. 
\end{equation}
In particular, any graph $G$ with a leaf satisfies
\begin{equation} \label{eq:nuleaf} 
\nu(G) < 1. 
\end{equation}
\end{corollary}

\begin{proof}
  Let $x_0 \in V$ be a vertex of minimal degree. By choosing $G_0$ to be this vertex, it follows from \eqref{eq:nuGnuG0} that  
  $$ \nu(G) \le d_{x_0}^{out} = d_{\rm{min}}. $$
  Moreover, we have strict inequality in \eqref{eq:corcase} of the proof of \eqref{eq:nuGnuG0}, since the trivial extension $\tilde f$ cannot satisfy the eigenvalue equation at any neighbour of $x_0$ and, therefore, we obtain the strict inequality in \eqref{eq:nudmin}. 
\end{proof}

\subsection{Upper and lower Ramanujan type bounds}
\label{subsec:ramanujan}

A connected $d$-regular graph $G=(V,E)$ is called Ramanujan, if all of its eigenvalues $\lambda \not\in \{0,2d\}$ lie within the (Ramanujan) interval $[d-2\sqrt{d-1},d+2\sqrt{d-1}]$. This interval is the (continuous) $L^2$-Laplace spectrum of the $d$-regular tree. The following upper bound is derived in the spirit of the proof of the Alon-Boppana Theorem (see \cite{N91}). Recall that the girth of a graph is the length of a shortest cycle in the graph.
    
\begin{proposition} \label{prop:alonbopp} Let $G=(V,E)$ be a connected graph with $d_{\max} \ge 2$ and a vertex $x_0 \in V$ which is not contained in a cycle of length $< 2k+1$, $k \in \mathbb{N}$. Then we have
$$ \nu(G) \le d_{\max} - 2\sqrt{d_{\max}-1} + \frac{2 \sqrt{d_{\max}-1}-1}{k}. 
$$
\end{proposition}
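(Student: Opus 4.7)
The plan is to mimic Nilli's edge-based argument from the proof of the Alon--Boppana theorem, adapted to the magnetic Laplacian. Fix any magnetic potential $\sigma$ on $G$ and pick an arbitrary edge $e = \{u,v\} \in E$. Set $d := d_{\max}$, $\beta := \sqrt{d-1}$, $d(x,e) := \min(d(x,u),d(x,v))$, and $V_j := \{x \in V : d(x,e) = j\}$, so $V_0 = \{u,v\}$. I would build a BFS tree $T$ rooted at $e$ of depth $k-1$: it consists of the edge $e$ together with, for each $j = 1,\dots,k-1$ and each $x \in V_j$, one chosen edge from $x$ to a \emph{parent} in $V_{j-1}$. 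By gauge invariance, I may assume $\sigma \equiv 1$ on every edge of $T$.

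Next I would use the girth hypothesis locally. Any non-tree edge with both endpoints in $V_0 \cup \cdots \cup V_{k-1}$ would close a cycle of length at most $2k$ (via the tree, possibly through $e$), contradicting girth $\ge 2k+1$. Hence every edge in $\{x : d(x,e) \le k-1\}$ is a tree edge. Non-tree edges between $V_{k-1}$ and $V_k$ are not excluded, since they could close cycles of length up to $2k+1$. However, for any $x \in V_{k-1}$, its unique parent lies in $V_{k-2}$ and, by the same girth argument, its remaining $\le d-1$ neighbours can lie neither in $V_{k-2}$ nor in $V_{k-1}$, hence they all lie in $V_k$. This yields the crucial estimate
$$
|\{\{x,y\} \in E : x \in V_{k-1},\, y \in V_k\}| \le (d-1)\,|V_{k-1}|.
$$

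For the test function I would take
$$
\phi(x) := \begin{cases} (d-1)^{-d(x,e)/2} & \text{if } d(x,e) \le k-1, \\ 0 & \text{otherwise,}\end{cases}
$$
and compute its magnetic Rayleigh quotient. Writing $s_j := |V_j|(d-1)^{-j}$, one checks $s_0 = 2$ and $s_{j+1} \le s_j$ (from $|V_{j+1}| \le (d-1)|V_j|$). In the Dirichlet form: the edge $e$ contributes $0$ (since $\phi(u) = \phi(v) = 1$ and $\sigma(u,v) = 1$ after the gauge); each tree edge from $V_j$ to $V_{j+1}$ with $j \le k-2$ contributes $(d-1)^{-j}(1 - 1/\beta)^2$, and these telescope to $(\beta-1)^2 \sum_{j=1}^{k-1} s_j$; each edge (tree or non-tree) between $V_{k-1}$ and $V_k$ contributes $(d-1)^{-(k-1)}$ since $\phi$ vanishes on $V_k$, for a total at most $(d-1)\,s_{k-1}$ by the boundary estimate above. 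With denominator $\|\phi\|^2 = \sum_{j=0}^{k-1} s_j$, I obtain
$$
\mathcal{R}^\sigma(\phi) \le (\beta-1)^2 + \frac{(d-1) s_{k-1} - 2(\beta-1)^2}{\sum_{j=0}^{k-1} s_j}.
$$

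To conclude, I would use the monotonicity $\sum_{j=0}^{k-1} s_j \ge k\, s_{k-1}$ together with $s_{k-1} \le s_0 = 2$, which give
$$
\frac{(d-1)s_{k-1} - 2(\beta-1)^2}{\sum_j s_j} \le \frac{(d-1) - 2(\beta-1)^2/s_{k-1}}{k} \le \frac{(d-1) - (\beta-1)^2}{k} = \frac{2\beta - 1}{k}
$$
(if $s_{k-1} = 0$ the fraction is non-positive, and the conclusion is immediate). Taking the supremum over $\sigma$ gives the claim. The main obstacle to overcome is the boundary contribution: non-tree edges between $V_{k-1}$ and $V_k$ cannot be ruled out under girth $\ge 2k+1$, so they must be handled together with the tree boundary edges (fortunately, they contribute identically since $\phi = 0$ on $V_k$), and obtaining the precise coefficient $2\sqrt{d-1}-1$ rather than the looser $2\sqrt{d-1}$ depends crucially on the edge-based choice of test function producing $s_0 = 2$, which powers the final algebraic cancellation.
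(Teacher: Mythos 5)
Your proposal is correct, and it is the same Alon--Boppana-type strategy as the paper: gauge $\sigma$ trivial on a tree, plug a radially decaying test function $ (d_{\max}-1)^{-j/2}$ into the Rayleigh quotient, and control the boundary term by an averaging/monotonicity argument over the layer sums. The execution differs in one worthwhile respect: you root at an edge and support the test function on $\{x:\,d(x,e)\le k-1\}$, whereas the paper roots at a vertex $x_0$, uses the ball $B_k(x_0)$ and the plateau $F_0=F_1=1$. Your edge-rooting makes the girth argument airtight: any non-tree edge with both endpoints at distance $\le k-1$ from $e$ closes a cycle of length at most $2k$, so \emph{all} edges in the support are tree edges and can be gauged trivial. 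In the paper's vertex-rooted version, girth $\ge 2k+1$ does not quite force the induced ball $B_k(x_0)$ to be a tree (an edge inside $S_k(x_0)$ closing a $(2k+1)$-cycle through $x_0$ is possible, e.g.\ in $C_{2k+1}$), so the paper's bookkeeping implicitly ignores such edges; your variant avoids this subtlety altogether. The constant $2\sqrt{d_{\max}-1}-1$ also arises by a slightly different mechanism: you use $s_{k-1}\le s_0=2$ to absorb $-2(\beta-1)^2$, while the paper splits $(d_{\max}-1)$ as $(d_{\max}-2\sqrt{d_{\max}-1})+(2\sqrt{d_{\max}-1}-1)$ in the boundary term and averages; the arithmetic is equivalent. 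Two cosmetic points: the first inequality in your final display is only valid when the numerator $(d_{\max}-1)s_{k-1}-2(\beta-1)^2$ is non-negative (otherwise the left-hand side is already non-positive and the conclusion is immediate, as you note for $s_{k-1}=0$), and the tree-edge sum is a direct summation rather than a telescoping one; neither affects correctness.
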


\begin{proof}
The condition on the vertex $x_0 \in V$ implies that the subgraph induced by the vertices of the ball $B_k(x_0)$ of radius $k$ about $x_0$ is a tree. For simplicity, we denote the sphere $S_i(x_0)$ of radius $i$ about $x_0$ henceforth by $S_i$. Let $f: V \to \R$ be defined as follows: $f(x) = F_i$ for all $x \in S_i(x_0)$ for $i \le k$, where
$$ F_0 = F_1 = 1, \quad F_i = \frac{1}{(\sqrt{d-1})^{i-1}}, $$
and $f(x) = 0$, otherwise. Let $\sigma: E^{or}(G) \to S^1$ be an arbitrary magnetic potential. Since this potential can be gauged to become trivial on all edges of $B_k(x_0)$, we can assume, without loss of generality, that $\sigma$ is trivial on these edges, and the Rayleigh quotient of $f$ is given as follows
$$ \RR(f) = \frac{\sum_{i=1}^{k-1} |E(S_i,S_{i+1})| \cdot (F_{i+1}-F_i)^2 + |E(S_k,S_{k+1})| \cdot F_k^2}{\sum_{i=0}^k |S_i| \cdot F_i^2}. $$
The denominator of $\RR(f)$ is bounded below by
\begin{equation} \label{eq:crucial0} 
\sum_{i=1}^k |S_i| \cdot F_i^2 = \sum_{i=1}^k \frac{|S_i|}{(d_{\max}-1)^{i-1}}. 
\end{equation}
The first term of the numerator can be estimated from above as follows:
\begin{align}
   \sum_{i=1}^{k-1} |E(S_i,S_{i+1})| \cdot (F_{i+1}-F_i)^2 &\le \sum_{i=1}^{k-1} |S_i| (d_{\max}-1) \left( \frac{1}{(\sqrt{d_{\max}-1})^i} - \frac{1}{(\sqrt{d_{\max}-1})^{i-1}}\right)^2 \nonumber \\
   &= (d_{\max}-2\sqrt{d_{\max}-1}) \sum_{i=1}^{k-1} \frac{|S_i|}{(d_{\max}-1)^{i-1}} \label{eq:crucial1}
\end{align}
The second term of the numerator can be estimated from above as follows:
\begin{align}
|E(S_k,S_{k+1})| \cdot F_k^2 &\le  |S_k| (d_{\max}-1) \cdot \frac{1}{(d_{\max}-1)^{k-1}} \nonumber\\
&= \frac{(d_{\max}-2\sqrt{d_{\max}-1})\cdot|S_k|}{(d_{\max}-1)^{k-1}}
+ \frac{(2\sqrt{d_{\max}-1}-1)\cdot|S_k|}{(d_{\max}-1)^{k-1}} \nonumber\\
&\le \frac{(d_{\max}-2\sqrt{d_{\max}-1})\cdot|S_k|}{(d_{\max}-1)^{k-1}}
+ (2\sqrt{d_{\max}-1}-1) \cdot \frac{1}{k} \sum_{i=1}^k \frac{|S_i|}{(d_{\max}-1)^{i-1}} \label{eq:crucial2}.
\end{align}
In the derivation of \eqref{eq:crucial2}, we applied the inequality
$$ |S_{i+1}| \le (d_{\max}-1)|S_i|$$ 
recursively and applied averaging.

The total numerator, that is, the sum of \eqref{eq:crucial1} and \eqref{eq:crucial2}, amounts to
$$ \left( d_{\max} - 2 \sqrt{d_{\max}-1} + \frac{2\sqrt{d_{\max}-1}-1}{k}\right) \sum_{i=1}^k \frac{|S_i|}{(d_{\max}-1)^{i-1}}. $$
Combining this with the denominator estimate \eqref{eq:crucial0} above, we end up with the estimate
$$ \RR^\sigma(f) \le d_{\max} - 2 \sqrt{d_{\max}-1} + \frac{2\sqrt{d_{\max}-1}-1}{k}. $$
\end{proof}

\begin{remark} In the case of $d$-regular bipartite graphs $G$, the upper bound $d-\sqrt{d}$ in Proposition \ref{prop:regbipartuppbd} is weaker than the upper Alon-Boppana type bound 
$$ d-2\sqrt{d-1}+\frac{2\sqrt{d-1}-1}{k} $$
in Proposition \ref{prop:alonbopp}, whenever $k \ge 2$, which means that the girth of $G$ is $\ge 6$. 
\end{remark}

Next we present a lower bound of the magneto-spectral height which, in the case of $d$-regular graphs, reduces to a lower Ramanujan bound.

\begin{proposition} \label{prop:MSS}
Let $G=(V,E)$ be a finite connected graph with $d_{\rm{min}} \ge 2$. Then we have
\begin{equation} \label{eq:ramadmindmax} 
d_{\min} - 2 \sqrt{d_{\max}-1} \le \nu(G) \le \min_{x \sim y} \frac{d_x+d_y}{2} - 1, 
\end{equation}
and for $d$-regular graphs
$$ d-2\sqrt{d-1} \le \nu(G) \le d-1, $$
where the upper bound is assumed for the complete graph $K_{d+1}$.
\end{proposition}

\begin{proof} 
The upper bound is just \eqref{eq:nuGmindxdy} in Lemma \ref{lem:upbdxdy}. For the lower bound, we apply Theorem 5.3 in \cite{MSS15}, which states that 
there exists a signature $\sigma^*: E^{or}(G) \to \{-1,1\}$ with
$$ \lambda_{\rm{max}}(A_G^{\sigma^*}) \le \rho(A_{\tilde G}), $$
where $A_G^\sigma = D - \Delta_G^\sigma$ and $D: V \to \mathbb{N}$ is the vertex degree operator, $A_{\tilde G}$ is the adjacency operator of the universal covering $\tilde G$ of $G$, and $\rho$ denotes the spectral radius. Note that the tree $\tilde G$ can be injectively embedded into the infinite $d_{\max}$-regular tree $T$ with $\rho(T) = 2\sqrt{d_{\max}-1}$, and we have
$$ \rho(A_{\tilde G}) = \sup_{f\in C(\tilde G,\C) \atop f \neq 0} | \mathcal{R}_{\tilde G}(f)| \le \sup_{\tilde f \in C(T,\C) \atop \tilde f \neq 0} | \RR_T(\tilde f) | = \rho(T), $$
since $\RR_{\tilde G}(f) = \RR_T(\tilde f)$ for the trivial extension $\tilde f \in C(T,\C)$ of any non-zero $f \in C(\tilde G,\C)$. This implies
\begin{align*}
\nu(G) \ge \lambda_1(\Delta_G^{\sigma^*}) = \lambda_1(D-A_G^{\sigma^*}) &\ge d_{\min} - \lambda_{\max}(A_G^{\sigma^*}) \\ &\ge d_{\min} - \rho(A_{\tilde G}) \ge d_{\min} - 2 \sqrt{d_{max}-1}.
\end{align*}
\end{proof}

\begin{remark} The proof of the previous proposition requires an upper bound of the spectral radius of the adjacency operator of the universal covering of a finite graph. For an explicit formula of this spectral radius, we refer readers to \cite[Theorem 1.1]{GVK22} and for a lower bound, to \cite[Corollary 2]{Ji19}.
\end{remark}

\begin{remark}
The previous two propositions imply, in particular, that we have for any infinite family $G_n$ of connected, increasing $d$-graphs, $d \ge 3$, with ${\rm{girth}}(G_n) \to \infty$, that the sequence $\nu(G_n)$ is convergent with 
$$ \lim_{n \to \infty} \nu(G_n) = d-2\sqrt{d-1}, $$
which is the lower end of the Ramanujan interval.
\end{remark}

\subsection{Lower bounds in terms of diameter}

The lower diameter bounds in this subsection and their proofs are inspired by \cite[Theorem 4.2]{ChL24} and \cite[Theorem 1.1]{ChL24}, respectively. 

\begin{proposition} \label{prop:nudiamvol}
  Let $G=(V,E)$ be a connected graph. Then we have
  $$ 
  \nu(G) \ge \inf_{\sigma: E^{or}(G) \to \{\pm 1\} \atop \text{unbalanced}} \lambda_1^\sigma(G) \ge \frac{1}{({\rm{diam}(G)+1})|V|}.
  $$
\end{proposition}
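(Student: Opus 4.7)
The first inequality is immediate: by definition $\nu(G)=\sup_\sigma \lambda_1^\sigma(G)$ dominates $\lambda_1^\sigma(G)$ for every unbalanced signature $\sigma$, hence also the infimum over such signatures. For the second inequality, I would fix an unbalanced signature $\sigma\colon E^{or}(G)\to\{\pm 1\}$ and a real eigenfunction $f$ of $\Delta^\sigma$ for $\lambda_1^\sigma(G)$, normalized by $\sum_x f(x)^2 = 1$. The plan is to show that for every vertex $v\in V$ there is a short closed walk at $v$ with holonomy $-1$, use it to bound $f(v)^2$ by a multiple of $\lambda_1^\sigma(G)$, and sum over $v$.

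For a closed walk $\gamma=(v_0,v_1,\dots,v_m=v_0)$ with $\prod_i \sigma(v_i,v_{i+1}) = -1$, iterating the identity $f(v_i)=\sigma(v_i,v_{i+1})f(v_{i+1})+\epsilon_i$ with $\epsilon_i := f(v_i)-\sigma(v_i,v_{i+1}) f(v_{i+1})$ telescopes, after using the holonomy assumption, to
\[
2f(v_0)=\sum_{i=0}^{m-1}\alpha_i\epsilon_i,\qquad \alpha_i := \prod_{j<i}\sigma(v_j,v_{j+1})\in\{\pm 1\}.
\]
Cauchy--Schwarz followed by regrouping $\sum_i \epsilon_i^2$ according to which edge of $G$ is traversed at step $i$ yields
\[
4 f(v_0)^2 \le m\cdot N\cdot\lambda_1^\sigma(G),
\]
where $m$ is the length of $\gamma$ and $N$ is the maximum multiplicity of a single edge of $G$ within $\gamma$.

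The main constructive step will be: for every $v$ there exists such a closed walk $\gamma_v$ with $m_v\le 2\,\mathrm{diam}(G)+1$ and $N_v\le 2$. I would prove this by case analysis on the holonomies of shortest paths from $v$. In the first case, some vertex $w$ admits two shortest paths $P_1,P_2\colon v\to w$ with opposite holonomies; then $P_1\cdot P_2^{-1}$ is a closed walk at $v$ with holonomy $-1$, of length at most $2\,\mathrm{diam}(G)$, and each edge is traversed at most twice. In the remaining case, for every $w$ all shortest paths $v\to w$ share a common holonomy $\phi(w)\in\{\pm 1\}$, so $\phi$ is a well-defined $\pm 1$-valued potential on $V$. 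For any edge $\{a,b\}$ between consecutive BFS layers from $v$, extending a shortest path $v\to a$ by the edge $\{a,b\}$ gives a shortest path $v\to b$, forcing $\sigma(a,b)=\phi(a)\phi(b)$. Thus the gauge-equivalent signature $\sigma^\phi$ equals $+1$ on every adjacent-layer edge; since $\sigma$ (hence $\sigma^\phi$) is unbalanced, there must exist a ``horizontal'' edge $\{a,b\}$ with $d(v,a)=d(v,b)$ on which $\sigma(a,b)=-\phi(a)\phi(b)$. Concatenating a shortest path $v\to a$, the edge $\{a,b\}$, and a shortest path $b\to v$ gives the required $\gamma_v$, of length $2d(v,a)+1\le 2\,\mathrm{diam}(G)+1$, holonomy $-1$, and edge multiplicity at most two.

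Summing $4f(v)^2\le m_vN_v\lambda_1^\sigma(G)\le (4\,\mathrm{diam}(G)+2)\lambda_1^\sigma(G)$ over $v\in V$ and using $\sum_v f(v)^2 = 1$ gives
\[
4 \le \lambda_1^\sigma(G)\cdot(4\,\mathrm{diam}(G)+2)|V|,
\]
which rearranges to $\lambda_1^\sigma(G)\ge \tfrac{2}{(2\,\mathrm{diam}(G)+1)|V|}\ge \tfrac{1}{(\mathrm{diam}(G)+1)|V|}$, the desired bound. The main obstacle is the second case of the constructive lemma: one has to argue that when the shortest-path holonomies from $v$ form a consistent potential, unbalancedness of $\sigma$ is necessarily witnessed by a horizontal equal-distance non-tree edge, so that the closed walk length stays bounded by $2\,\mathrm{diam}(G)+1$ rather than by something involving the shortest unbalanced cycle, which could otherwise be as large as $|V|$.
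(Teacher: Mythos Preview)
Your argument is correct and genuinely different from the paper's. The paper normalizes the eigenfunction so that $|f(x_0)|=\max_x |f(x)|=1$, which immediately gives $\sum_x |f(x)|^2\le |V|$, and then produces a \emph{single} path of length at most $\mathrm{diam}(G)+1$ starting at $x_0$ along which a telescoping/Cauchy--Schwarz estimate forces $\sum_j |f(x_j)-\sigma(x_j,x_{j+1})f(x_{j+1})|^2\ge 1/(\mathrm{diam}(G)+1)$. The path is found by a case split on the eigenfunction itself: either $f$ vanishes somewhere (walk there), or one locates an edge $\{x,y\}$ with $f(x)\sigma(x,y)f(y)<0$ (which must exist since otherwise $\mathrm{sgn}(f)$ would switch $\sigma$ to the trivial signature) and walks from $x_0$ to that edge. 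Your route instead normalizes in $\ell^2$, proves a purely combinatorial lemma---for every vertex $v$ there is a closed walk at $v$ of length $\le 2\,\mathrm{diam}(G)+1$, holonomy $-1$, and edge multiplicity $\le 2$---and then bounds $f(v)^2$ pointwise and sums. Your BFS-layer analysis in Case~2 (the gauge $\phi$ trivializes all inter-layer edges, so unbalancedness must be witnessed by a horizontal edge) is the clean substitute for the paper's eigenfunction-dependent case split. What your approach buys is a structural statement independent of $f$, and incidentally the slightly sharper constant $\tfrac{2}{(2\,\mathrm{diam}(G)+1)|V|}$; what the paper's approach buys is brevity, since only one path from the maximizing vertex is needed rather than a closed walk through every vertex.
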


Note that the order of this inequality is sharp, since we have for cycles $\nu(C_n) \sim \frac{1}{n^2}$ and ${\rm{diam}}(C_n) \sim n$ and $|C_n| = n$.

\begin{proof}
  Let $\sigma: E^{or}(G) \to \{\pm 1\}$ be an unbalanced magnetic potential and $f: V \to \R$ be an eigenfunction to the eigenvalue $\lambda_1^\sigma(G)$. Let $x_0 \in V$ be a vertex satisfying
  $$ |f(x_0)| \ge f(x) \quad \text{for all $x\in V$. } $$
  Without loss of generality, we can assume that $f(x_0)=1$, by rescaling. This implies that
  $$ \sum_{x \in V} |f(x)|^2 \le |V|. $$
  For any path $x_0 \sim x_1 \sim \cdots \sim x_t$, we have, by Cauchy-Schwarz,
  \begin{multline*}
  \sum_{j=0}^{t-1} |f(x_j) - \sigma(x_j,x_{j+1}) f(x_{j+1})|^2 = |f(x_0)-\sigma(x_0,x_1)f(x_1)|^2 \\ + |\sigma(x_0,x_1)f(x_1)-\sigma(x_0,x_1)\sigma(x_1,x_2)f(x_2)|^2 + \cdots \\ + |\sigma(x_0,x_1)\cdots\sigma(x_{t-2},x_{t-1})f(x_{t-1}) - \sigma(x_0,x_1)\cdots\sigma(x_{t-1},x_t)f(x_t)|^2 \ge \\
  \frac{1}{t} | f(x_0) - \sigma(x_0,x_1)\cdots\sigma(x_{t-1},x_t)f(x_t)|^2. 
  \end{multline*}

  {\bf{Claim:}} There exists a path $x_0 \sim x_1 \sim \cdots \sim x_s$ of length $s \le {\rm{diam}}(G)+1$, such that 
  $$ \sum_{j=0}^{s-1} |f(x_j) - \sigma(x_j,x_{j+1}) f(x_{j+1})|^2 \ge \frac{1}{{\rm{diam}}(G)+1}. $$

  The claim is proved as follows: In the case that there exists $x \in V$ with $f(x)=0$, then we choose a path of length $t \le {\rm{diam}}(G)$ from $x_0$ to $x=x_t$, and this inequality yields
$$ \sum_{j=0}^{t-1} |f(x_j)-\sigma(x_j,x_{j+1})f(x_{j+1})|^2 \ge \frac{1}{t} |f(x_0)|^2 \ge  \frac{1}{{\rm{diam}}(G)}. $$
Now we consider the case when $f(x) \neq 0$ for all $x \in V$. Since $\sigma$ is unbalanced, there must exist an edge $x \sim y$ with
$$ f(x)\sigma(x,y)f(y) < 0, $$
for, otherwise, $\tau(x):=f(x)/|f(x)|$ would be a switching function such that $\sigma^\tau$ is the trivial magnetic potential. Now we choose a path of length $t \le {\rm{diam}}(G)$ from $x_0$ to one of $x,y$, which does not contain the other vertex. Without loss of generality the end point of this path, denoted by $x_t$, is $x$. If 
$$ \sigma(x_0,x_1)\cdots\sigma(x_{t-1},x_t)f(x_t) \le 0, $$
then we obtain 
$$ \sum_{j=0}^{t-1} |f(x_j)-\sigma(x_j,x_{j+1})f(x_{j+1})|^2 \ge \frac{1}{t} |f(x_0)|^2 \ge \frac{1}{{\rm{diam}}(G)}. $$
Otherwise, we conclude that
$$ \sigma(x_0,x_1)\cdots\sigma(x_{t-1},x_t)\sigma(x_{t},y)f(y) \le 0, $$
and we obtain, by applying the above inequality again for the extended path from $x_0$ to $y=x_{t+1}$ via $x=x_t$,
$$ \sum_{j=0}^t |f(x_j)-\sigma(x_j,x_{j+1})f(x_{j+1})|^2 \ge \frac{1}{t+1} |f(x_0)|^2 \ge \frac{1}{{\rm{diam}}(G)+1}.$$
This finishes the proof of the claim.

The claim implies that 
$$ \lambda_1^\sigma(G) = \RR(f) \ge \frac{1}{{\rm{diam}}(G)+1} \cdot \frac{1}{\sum_{x \in V} |f(x)|^2} \ge \frac{1}{({\rm{diam}(G)+1})|V|}. $$
\end{proof}

The next result holds for graphs with non-negative Bakry-\'Emery curvature. Before we state it, let us briefly introduce this curvature notion. For $f,g: V \to \C$ and $\sigma: E^{or}(G) \to S^1$, we consider the following bilinear forms:
\begin{align*}
    2 \Gamma^\sigma(f,g)(x) &= - \Delta(f\bar g)(x) + f(x) \cdot \overline{\Delta^\sigma g(x)} + \Delta^\sigma f(x) \cdot \overline{ g(x)} \\
    &= \sum_{y: y \sim x} \left( f(x)-\sigma(x,y)f(y) \right) \overline{\left(g(x)-\sigma(x,y)g(y) \right)}, \\
    2 \Gamma_2^\sigma(f,g)(x) &= \Delta \Gamma^\sigma(f,g)(x) - \Gamma^\sigma(f,\Delta^\sigma g)(x) - \Gamma^\sigma(\Delta^\sigma f,g)(x).
\end{align*}
We say that a graph $G$ satisfies the \emph{curvature-dimension inequality $CD^\sigma(K,n)$} at a vertex $x \in V$, if we have for all functions $f: V \to \C$,
\begin{equation} \label{eq:CD-ineq} 
\Gamma_2^\sigma(f,f)(x) \ge \frac{1}{n} (\Delta^\sigma f(x))^2 + K \Gamma^\sigma(f,f)(x) \quad \text{for all functions $f \in C(V,\mathbb{C})$.} 
\end{equation}
For $n=\infty$, the first right hand term in \eqref{eq:CD-ineq} disappears, and we define the \emph{$\sigma$-curvature} $K^\sigma_\infty(x)$ of the vertex $x$ to be the supremum of all $K \in \mathbb{R}$ satisfying \eqref{eq:CD-ineq} for $n=\infty$. 

We say that $G$ satisfies $CD^\sigma(K,n)$ globally, if it satisfies $CD^\sigma(K,n)$ at all vertices. In the special case of the trivial signature $\sigma_0$, we use the simplified notation that $G$ satisfies $CD(K,n)$ instead of $CD^{\sigma_0}(K,n)$. This curvature notion was originally introduced in \cite{B85,BE85}. The modification for magnetic potentials was given in \cite{LMP19}. 

The $CD^\sigma$-condition at a vertex $x \in V$ is a local condition, and it is completely determined by the combinatorial structure and the magnetic potential on the incomplete $2$-ball of $x$. The incomplete $2$-ball $\mathring{B}_2(x)$ is the induced subgraph of the $2$-ball $B_2(x)$ with all spherical edges within the $2$-sphere $S_2(x)$ removed.

\begin{proposition} \label{prop:nudiam} Let $G=(V,E)$ be a connected, non-bipartite, triangle-free graph satisfying $CD(0,\infty)$. Then we have
$$ \nu(G) \ge \frac{C}{{\rm{diam}}^2(G)} $$
with a universal constant $C$.
\end{proposition}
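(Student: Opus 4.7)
The plan is to test with the anti-balanced signature $\overline{\sigma}\equiv -1$, so that the proposition reduces to proving
$$ \lambda_1^{\overline{\sigma}}(G) \ge \frac{C}{\mathrm{diam}^2(G)}, $$
and then adapting the unsigned diameter estimate \cite[Theorem 1.1]{ChL24} to this signed Laplacian. Two structural reductions use up the hypotheses. Any switching function trivialising $\overline{\sigma}$ would be a proper $2$-colouring of $G$, so the non-bipartite hypothesis forces $\overline{\sigma}$ to be unbalanced, whence $\lambda_1^{\overline{\sigma}}(G)>0$. Triangle-freeness takes care of the curvature transfer: a direct expansion shows that $\Gamma_2^{\overline{\sigma}}-\Gamma_2$ at a base vertex $x$ reduces to a sum over triangles through $x$ weighted by the triangle-holonomy of $\overline{\sigma}$, which is vacuous in a triangle-free graph, so $CD(0,\infty)$ upgrades verbatim to $CD^{\overline{\sigma}}(0,\infty)$.

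With $CD^{\overline{\sigma}}(0,\infty)$ in hand, the Bakry--\'Emery semigroup interpolation computation goes through unchanged and yields the gradient estimate
$$ \Gamma^{\overline{\sigma}}(e^{-t\Delta^{\overline{\sigma}}} f) \le e^{-t \Delta^{\overline{\sigma}}}\, \Gamma^{\overline{\sigma}}(f) \quad \text{for all } t \ge 0, $$
since the interpolation derivation uses only the bilinear-form identities defining $\Gamma^{\overline{\sigma}}$ and $\Gamma_2^{\overline{\sigma}}$ and does not see the signature. From this point I would follow the scheme of \cite[Theorem 1.1]{ChL24}: apply the gradient estimate to an eigenfunction $f$ of $\lambda_1^{\overline{\sigma}}(G)$ normalised by $\|f\|_\infty=1$, pair $f$ with a test function supported along a geodesic path realising the diameter, and use the gradient estimate to propagate the information that $f$ is large at its maximiser to another vertex at which unbalancedness forces a quantitative $\overline{\sigma}$-oscillation.

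The main obstacle will be supplying the signed replacement for the sign-change step used in the unsigned case. There one exploits that an $L^2$-eigenfunction orthogonal to the constants attains both positive and negative values; here one must instead use that unbalancedness of $\overline{\sigma}$ forces some adjacent pair $x\sim y$ with $f(x)+f(y)$ of definite size relative to $\|f\|_\infty$, exactly as in the second half of the proof of Proposition \ref{prop:nudiamvol}. Converting this qualitative oscillation into the sharp $1/\mathrm{diam}^2(G)$ decay (rather than the weaker $1/((\mathrm{diam}(G)+1)|V|)$ of Proposition \ref{prop:nudiamvol}) requires iterating the gradient estimate along this geodesic path and combining it with an $L^2$-to-$L^\infty$ bound for the magnetic heat semigroup $e^{-t\Delta^{\overline{\sigma}}}$, which must be extracted from $CD^{\overline{\sigma}}(0,\infty)$ in parallel with the unsigned case; this calibration, so that the volume factor cancels and a universal constant emerges at the end, is the technical heart of the argument.
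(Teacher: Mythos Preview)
Your structural reductions are exactly what the paper does: test with the anti-balanced signature $\overline{\sigma}$, use non-bipartiteness to ensure $\overline{\sigma}$ is unbalanced (so $\lambda_1^{\overline{\sigma}}(G)>0$), and use triangle-freeness to upgrade $CD(0,\infty)$ to $CD^{\overline{\sigma}}(0,\infty)$. The paper phrases this last step slightly differently---triangle-freeness makes the incomplete $2$-ball $\mathring{B}_2(x)$ bipartite, so $\overline{\sigma}$ is locally gauge-trivial and the $CD^\sigma$-condition, being local and gauge-invariant, transfers---but your ``$\Gamma_2^{\overline{\sigma}}-\Gamma_2$ is supported on triangles'' is an equivalent formulation. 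You also correctly identify that the path/oscillation step from the claim in Proposition~\ref{prop:nudiamvol} will be reused.

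Where your proposal diverges, and where the gap lies, is in the analytic core. You propose the semigroup gradient estimate $\Gamma^{\overline{\sigma}}(P_t f)\le P_t\,\Gamma^{\overline{\sigma}}(f)$ together with an $L^2$-to-$L^\infty$ bound for $P_t=e^{-t\Delta^{\overline{\sigma}}}$, and you yourself flag the volume-factor cancellation as unresolved. That concern is well-founded: $L^2$-$L^\infty$ heat-kernel bounds generally require a finite-dimension condition like $CD(0,n)$ or volume doubling, neither of which follows from $CD(0,\infty)$ alone, so this route may not close. The paper avoids the issue entirely by a direct Li--Yau type maximum principle: for an eigenfunction $f$ one sets
\[
g(x) = 2\Gamma^{\overline{\sigma}} f(x) + \alpha\,\lambda_1^{\overline{\sigma}} f(x)^2,\qquad \alpha>2,
\]
and evaluates $\Delta g\ge 0$ at the maximiser $v$ of $g$ together with $\Gamma_2^{\overline{\sigma}}(f)(v)\ge 0$. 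This yields the \emph{pointwise} bound $\Gamma^{\overline{\sigma}} f(x)\le C\,\lambda_1^{\overline{\sigma}}\,\|f\|_\infty^2$ for all $x$, with no volume or semigroup input. Feeding this into the path claim from Proposition~\ref{prop:nudiamvol} gives $\frac{1}{\mathrm{diam}+1}\le C\,(\mathrm{diam}+1)\,\lambda_1^{\overline{\sigma}}$ directly. So the missing idea in your proposal is this auxiliary-function maximum principle, which replaces the semigroup machinery and makes the volume factor a non-issue.
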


\begin{proof}
{\bf{Step 1:}} In this step, we prove the following: If $G=(V,E)$ is a connected graph satisfying $CD^\sigma(0,\infty)$ for an unbalanced signature $\sigma: E^{or}(G) \to \{-1,1\}$, then
$$ \lambda_1^\sigma(G) \ge \frac{C}{{\rm{diam}}^2(G)} $$
for a universal constant $C$.
Since $\sigma$ is unbalanced, we know that $\lambda_1^\sigma=\lambda_1^\sigma(G) > 0$.  Let $f: V \to \R$ be an eigenfunction of $\Delta^\sigma$ to the eigenvalue $\lambda_1^\sigma$. Let
$$ g(x) = 2 \Gamma^\sigma f(x) + \alpha \lambda_1^\sigma f^2(x) $$
and $v \in V$ be such that
$$ g(v) = \max_{x \in V} g(x). $$
Then we have 
\begin{align*} 
0 \le \Delta g(v) &= 2\Delta \Gamma^\sigma f(v) + \alpha \lambda_1^\sigma \Delta f^2(v) \\
&= 2\Delta \Gamma^\sigma f(v) + 2 \alpha \lambda_1^\sigma ( f(v) \Delta^\sigma f(v)-\Gamma^\sigma f(v)).
\end{align*}
On the other hand, we conclude from the $CD^\sigma(0,\infty)$-condition
$$ 4 \Gamma_2^\sigma(f)(v) = - (2\Delta \Gamma^\sigma f)(v) + 4 \Gamma^\sigma(f,\Delta^\sigma f)(v) \ge 0. $$
Combining both results yields
\begin{align*} 
0 &\le 4 \Gamma^\sigma(f,\Delta^\sigma f)(v) + 2 \alpha \lambda_1^\sigma ( f(v) \Delta^\sigma f(v)-\Gamma^\sigma f(v))  \\
&= 2 \left( 2\lambda_1^\sigma \Gamma^\sigma(f)(v) + \alpha (\lambda_1^\sigma)^2f^2(v) - \alpha \lambda_1^\sigma \Gamma^\sigma f(v) \right) \\
&= 2 \left( \alpha (\lambda_1^\sigma)^2 f^2(v) + (2-\alpha)\lambda_1^\sigma \Gamma^\sigma(f)(v) \right). 
\end{align*}
Assuming $\alpha >2$, this implies
$$ \Gamma^\sigma f(v) \le \frac{\alpha}{\alpha-2} \lambda_1^\sigma f^2(v). $$
Using this, we obtain for any $x \in V$:
\begin{align*}
g(x) \le g(v) &= 2 \Gamma^\sigma f(v) + \alpha \lambda_1^\sigma f^2(v) \\
& \le \frac{2\alpha}{\alpha-2} \lambda_1^\sigma f^2(v) + \alpha \lambda_1^\sigma f^2(v) \\
&\le \frac{\alpha^2}{\alpha-2} \lambda_1^\sigma \max_{z \in V}f^2(z).
\end{align*}
This implies that we have for all $x \in V$,
$$ \sum_{y \sim x} |f(x) - \sigma(x,y)f(y)|^2 = 2 \Gamma^2f(x) \le g(x) \le \frac{\alpha^2}{\alpha-2} \lambda_1^\sigma \max_{z \in V } f^2(z).  $$
Without loss of generality, we can scale $f$ in such a way that
$$ f(x_0) = \max_{z \in V} f(z) = 1 $$
and
$$ f(x) \ge -1 \quad \text{for all $x \in V$.} $$
The claim in the proof of Proposition \ref{prop:nudiamvol} guarantees the existence of a path $x_0 \sim x_1 \sim \cdots \sim x_s$ of length $s \le {\rm{diam}}(G)+1$, satisfying
$$ \sum_{j=0}^{s-1} |f(x_j) - \sigma(x_j,x_{j+1}) f(x_{j+1})|^2 \ge \frac{1}{{\rm{diam}}(G)+1}. $$
Using the above estimate, we conclude that
$$ \frac{1}{{\rm{diam}}(G)+1} \le \sum_{j=0}^{s-1} \frac{\alpha^2}{\alpha-2} \lambda_1^\sigma \max_{z \in V} f^2(z) \le \frac{\alpha^2}{\alpha-2}({\rm{diam}}+1)\lambda_1^\sigma. $$
This finishes the proof of step 1.

\medskip

{\bf Step 2:} Assume that $G=(V,E)$ is a connected, non-bipartite triangle-free graph satisfying $CD(0,\infty)$. Let $\overline{\sigma}: E^{or}(G) \to S^1$ be the anti-balanced magnetic potential. Since $G$ is not bipartite, $\overline{\sigma}$ is an unbalanced magnetic potential. 

Triangle-freeness guarantees that, at every vertex $x \in V$, the magnetic potential on the incomplete $2$-ball $\mathring{B}_2(x)$ can be gauged to be trivial. 
Since the $CD^\sigma(K,n)$ condition at a vertex $x \in V$ is determined by the incomplete $2$-ball $\mathring{B}_2(x)$ and does not change under gauge transforms, we conclude that $G$ satisfies also the $CD^\sigma(0,\infty)$-condition at all vertices. Then the statement of the proposition follows from Step 1.
\end{proof}

\subsection{Regular bipartite graphs assuming the upper bound}
\label{sec:regbipuppbd}

For $d$-regular bipartite graphs $G$, we have the upper bound $\nu(G) \le d-\sqrt{d}$. This upper bound is assumed for all hypercubes and all complete bipartite graphs. In this subsection, we present further investigations about this particular class of graphs. We start with another example assuming this upper bound.

\begin{example}
Let $G$ be the $4$-regular bipartite graph $G$ given in \cite[Figure 1]{LMP24}. A magnetic potential $\hat \sigma: E^{or}(G) \to \{-1,1\}$ of this graph is illustrated in Figure \ref{fig:LMP24}, assuming the value $+1$ on the red edges and the value $-1$ on the blue edges. (Note that the blue edges form a perfect matching of $G$.)
\begin{figure}[h]
\begin{center}
\begin{tikzpicture}[thick,scale=1]
\filldraw [black] (5,0) circle (2pt);
\filldraw [black] (1,2) circle (2pt);
\filldraw [black] (3,2) circle (2pt);
\filldraw [black] (7,2) circle (2pt);
\filldraw [black] (9,2) circle (2pt);
\filldraw [black] (0,4) circle (2pt);
\filldraw [black] (2,4) circle (2pt);
\filldraw [black] (4,4) circle (2pt);
\filldraw [black] (6,4) circle (2pt);
\filldraw [black] (8,4) circle (2pt);
\filldraw [black] (10,4) circle (2pt);
\filldraw [black] (2,6) circle (2pt);
\filldraw [black] (5,6) circle (2pt);
\filldraw [black] (8,6) circle (2pt);
\draw (5,0) node[label=below:$x_1$]{};
\draw (1,2) node[label=below:$x_2$]{};
\draw (3,2) node[label=below:$x_3$]{};
\draw (7,2) node[label=below:$x_4$]{};
\draw (9,2) node[label=below:$x_5$]{};
\draw (0,4) node[label=below:$x_6$]{};
\draw (2,4) node[label=below:$x_7$]{};
\draw (4,4) node[label=below:$x_8$]{};
\draw (6,4) node[label=below:$x_9$]{};
\draw (8,4) node[label=below:$x_{10}$]{};
\draw (10,4) node[label=below:$x_{11}$]{};
\draw (2,6) node[label=above:$x_{12}$]{};
\draw (5,6) node[label=above:$x_{13}$]{};
\draw (8,6) node[label=above:$x_{14}$]{};
\draw [blue] (5,0) -- (1,2);
\draw [red] (5,0) -- (3,2);
\draw [red] (5,0) -- (7,2);
\draw [red] (5,0) -- (9,2);
\draw [red] (1,2) -- (0,4);
\draw [red] (1,2) -- (2,4);
\draw [red] (1,2) -- (4,4);
\draw [red] (3,2) -- (0,4);
\draw [blue] (3,2) -- (6,4);
\draw [red] (3,2) -- (8,4);
\draw [red] (7,2) -- (2,4);
\draw [red] (7,2) -- (6,4);
\draw [blue] (7,2) -- (10,4);
\draw [red] (9,2) -- (4,4);
\draw [blue] (9,2) -- (8,4);
\draw [red] (9,2) -- (10,4);
\draw [blue] (0,4) -- (2,6);
\draw [red] (0,4) -- (5,6);
\draw [red] (2,4) -- (2,6);
\draw [blue] (2,4) -- (8,6);
\draw [blue] (4,4) -- (5,6);
\draw [red] (4,4) -- (8,6);
\draw [red] (6,4) -- (5,6);
\draw [red] (6,4) -- (8,6);
\draw [red] (8,4) -- (2,6);
\draw [red] (8,4) -- (8,6);
\draw [red] (10,4) -- (2,6);
\draw [red] (10,4) -- (5,6);
\end{tikzpicture}
\end{center}
\caption{A $d$-regular bipartite graph with $\nu(H) = d-\sqrt{d}$ for $d=4$. \label{fig:LMP24}}
\end{figure}
\FloatBarrier
The matrix representation of the corresponding magnetic Laplacian is given by
$$ \Delta^{\hat \sigma} \cong {\small{\begin{pmatrix} \begin{array}{c|cccc|cccccc|ccc}
4& 1& -1& -1& -1& 0& 0& 0& 0& 0& 0& 0& 0& 0\\ 
\hline
1& 4& 0& 0& 0& -1& -1& -1& 0& 0& 0& 0& 0& 0\\ 
-1& 0& 4& 0& 0& -1& 0& 0& 1& -1& 0& 0& 0& 0\\ 
-1& 0& 0& 4& 0& 0& -1& 0& -1& 0& 1& 0& 0& 0\\ 
-1& 0& 0& 0& 4& 0& 0& -1& 0& 1& -1& 0& 0& 0\\ 
\hline
0& -1& -1& 0& 0& 4& 0& 0& 0& 0& 0& 1& -1& 0\\ 
0& -1& 0& -1& 0& 0& 4& 0& 0& 0& 0& -1& 0& 1\\ 
0& -1& 0& 0& -1& 0& 0& 4& 0& 0& 0& 0& 1& -1\\ 
0& 0& 1& -1& 0& 0& 0& 0& 4& 0& 0& 0& -1& -1\\ 
0& 0& -1& 0& 1& 0& 0& 0& 0& 4& 0& -1& 0& -1\\ 
0& 0& 0& 1& -1& 0& 0& 0& 0& 0& 4& -1& -1& 0\\ 
\hline
0& 0& 0& 0& 0& 1& -1& 0& 0& -1& -1& 4& 0& 0\\ 
0& 0& 0& 0& 0& -1& 0& 1& -1& 0& -1& 0& 4& 0\\ 
0& 0& 0& 0& 0& 0& 1& -1& -1& -1& 0& 0& 0& 4
\end{array} \end{pmatrix}}} $$
It can be verified with Maple that $\Delta^{\hat \sigma}$ has precisely the two eigenvalues $2,6$, both with multiplicity $7$. Applying Proposition \ref{prop:regbipartuppbd}(iii) yields $\nu(G) = 4-\sqrt{4}=2$.
\end{example}

The following result presents a relation between $d$-regular bipartite graphs $G$ satisfying $\nu(G) = d-\sqrt{d}$ and a special family of unitary matrices.

\begin{theorem} \label{thm:BBtop} Let $G=(V,E)$ be a $d$-regular bipartite graph with $2n$ vertices and $\nu(G) = d-\sqrt{d}$ and $\sigma: E^{or}(G) \to S^1$ be a maximal potential.  Enumerating the vertices of $G$ such that the first $n$ vertices agree with $V_1$ and the last $n$ vertices with $V_2$ of the bipartite vertex partition $V = V_1 \cup V_2$, the magnetic adjacency matrix $A^\sigma$ has the following block structure:
\begin{equation*} \label{eq:Asigmablock} 
A^\sigma = \begin{pmatrix} {\bf{0}}_n & B \\ \overline{B}^\top & {\bf{0}}_n \end{pmatrix}, 
\end{equation*}
with a matrix $B$ of size $n \times n$ satisfying 
$$ \left( \frac{1}{\sqrt{d}} B \right) \left( \frac{1}{\sqrt{d}} \overline{B}^\top \right) = {\rm{Id}}_n. $$
Moreover, all non-zero entries of $B$ have modulus $1$. 

Conversely, any complex-valued matrix $B$ of size $n \times n$ with $n \ge d$ having these properties provides a $d$-regular bipartite graph $G=(V,E)$ with $2n$-vertices and adjacency matrix
$$ A_G = \begin{pmatrix} {\bf{0}}_n & |B| \\ |B|^\top & {\bf{0}}_n \end{pmatrix}, $$
where $|B|$ is the matrix whose entries are the moduli of the entries of $B$, that is $|B|_{ik}=|B_{jk}|$ for $j,k \in \{1,\dots,n\}$.
\end{theorem}

\begin{proof} The statements in the theorem are an immediate consequence of Proposition \ref{prop:regbipartuppbd}(ii), since
any $d$-regular bipartite graph with $2n$ vertices satisfies $\nu(G)=d-\sqrt{d}$ if and only if there exists one (every) maximal potential $\sigma$ with $(A^\sigma)^2= d \cdot {\rm{Id}}_{2n}$. The chosen enumeration implies that $A^\sigma$ has the block structure given in \eqref{eq:Asigmablock}, which implies
$$ (A^\sigma)^2 = \begin{pmatrix} B \overline{B}^\top & {\bf{0}}_n \\ {\bf{0}}_n & \overline{B}^\top B \end{pmatrix}. $$
The identity $(A^\sigma)^2=d\cdot {\rm{Id}}_{2n}$ leads then to the stated results.
\end{proof}

\begin{corollary} \label{cor:Kddmagnetospecheight} The magneto-spectral height of any $d$-regular complete bipartite graph $K_{d,d}$ with $d \ge 2$ satisfies $\nu(K_{d,d})=d-\sqrt{d}$.
\end{corollary}

\begin{proof} Applying Theorem \ref{thm:BBtop}, this follows immediately from the fact that the DFT matrix
\begin{equation} \label{eq:BDFT} 
B = \begin{pmatrix} 1 & 1 & \cdots & 1 \\ 1 & \xi_d & \cdots & \xi_d^{d-1} \\ 1 & \xi_d^2 & \cdots & {\xi_d}^{2(d-1)} \\
\vdots & \vdots & \vdots & \vdots \\ 1 & \xi_d^{d-1} & \cdots & \xi_d^{(d-1)^2} \end{pmatrix} 
\end{equation}
with $\xi_d = e^{2 \pi i/d}$ satisfies $B \overline{B}^\top = d \cdot {\rm{Id}}$.
\end{proof}

\begin{remark} \label{rem:UWnd} Square matrices $B$ of size $n \times n$ with all non-zero entries of modulus $1$ satisfying $B \overline{B}^\top = d \cdot {\rm{Id}}_n$ are called \emph{unit weighing matrices} forming the set $UW(n,d)$. This family is a generalization of the set of Hadamard matrices. A discussion of these matrices for special values of $n,d$ is presented in \cite{BKR13}. Their results imply that there are only two different connected $3$-regular bipartite graphs $G$ satisfying $\nu(G) = 3-\sqrt{3}$, namely, $K_{3,3}$ and the cube $Q^3$. They also classify all matrices in $UW(n,4)$. This means that there are only four different connected $4$-regular bipartite graphs 
$G=G(W_j)$, $j=5,6,7,8$, corresponding to the $B$ matrices $W_5,W_6,W_7,W_8$ in \cite[Section 3.4]{BKR13} satisfying $\nu(G) = 4-\sqrt{4}=2$ and one infinite family $G=G(E_{2m})$ of such connected graphs with $4m$ vertices, corresponding to the $B=E_{2m}$ matrices in \cite[Section 3.4]{BKR13}.
For example, the matrix 
$$ B = W_5= \begin{pmatrix} 1 & 1 & 1 & 1 & 0 \\ 1 & \xi_3 & \xi_3^2 & 0 & 1 \\ 1 & \xi_3^2 & 0 & \xi_3 & \xi_3^2 \\ 1 & 0 & \xi_3 & \xi_3^2 & \xi_3 \\ 0 & 1 & \xi_3^2 & \xi_3 & \xi_3 \end{pmatrix} \in UW(5,4), $$
corresponds to crown graph with $10$ vertices ($K_{5,5}$ with a perfect matching removed). The graphs $G(E_{2m})$ can be constructed from even cycles $C_{2m}$, by doubling each vertex and replacing each edge by four edges forming a $K_{2,2}$, as shown in Figure \ref{fig:G2}.
\begin{figure}[h]
\begin{center}
\begin{tikzpicture}[thick,scale=1]
\draw (3,6.5) node[label=above:{\Huge{$C_4$}}]{};
\draw (11,6.5) node[label=above:{\Huge{$G(E_4)$}}]{};
\filldraw [red] (1,3) circle (2pt);
\filldraw [blue] (3,1) circle (2pt);
\filldraw [blue] (3,5) circle (2pt);
\filldraw [red] (5,3) circle (2pt);
\filldraw [red] (8,3) circle (2pt);
\filldraw [red] (9,3) circle (2pt);
\filldraw [blue] (11,0) circle (2pt);
\filldraw [blue] (11,1) circle (2pt);
\filldraw [blue] (11,5) circle (2pt);
\filldraw [blue] (11,6) circle (2pt);
\filldraw [red] (13,3) circle (2pt);
\filldraw [red] (14,3) circle (2pt);
\draw [black] (1,3) -- (3,1);
\draw [black] (3,1) -- (5,3);
\draw [black] (5,3) -- (3,5);
\draw [black] (3,5) -- (1,3);
\draw [black] (11,0) -- (8,3);
\draw [black] (11,0) -- (9,3);
\draw [black] (11,0) -- (14,3);
\draw [black] (11,0) -- (13,3);
\draw [black] (11,1) -- (8,3);
\draw [black] (11,1) -- (9,3);
\draw [black] (11,1) -- (13,3);
\draw [black] (11,1) -- (14,3);
\draw [black] (11,5) -- (8,3);
\draw [black] (11,5) -- (9,3);
\draw [black] (11,5) -- (13,3);
\draw [black] (11,5) -- (14,3);
\draw [black] (11,6) -- (8,3);
\draw [black] (11,6) -- (9,3);
\draw [black] (11,6) -- (13,3);
\draw [black] (11,6) -- (14,3);
\end{tikzpicture}
\end{center}
\caption{Construction of the $4$-regular bipartite graph $G(E_4)$ from the cycle $C_{2m}$ with $m=2$.\label{fig:G2}}
\end{figure}
\FloatBarrier
It also follows from the results that there does not exist a $5$-regular bipartite graph with $14$ vertices (see \cite[Setion 3.7]{BKR13}).

Moreover, the (Kronecker) tensor product of two matrices $H_1 \in UW(n_1,d_1)$ and $H_2 \in UW(n_2,d_2)$ yields a new matrix $H_1 \otimes H_2 \in UW(n_1n_2,d_1d_2)$. Note that this operation on the level of bipartite graphs $G_1$, $G_2$, denoted by $G_1 \odot G_2$, is different from taking Cartesian products or the standard tensor product of full adjacency metrices. However, the upper bound $\nu(G) = d-\sqrt{d}$ is also preserved under this particular graph operation.
\end{remark}

We also have the following interesting Bakry-\'Emery curvature results for $d$-regular bipartite graphs $G$ assuming the upper bound $\nu(G) = d-\sqrt{d}$. 

\begin{theorem} \label{thm:Chunyang} For any $d$-regular bipartite graph $G=(V,E)$ with $\nu(G) = d-\sqrt{d}$ and any maximal magnetic potential $\sigma$, the graph $G$ satisfies $CD^\sigma(0,\infty)$. Moreover, we have the following:
\begin{itemize}
    \item[(a)] Let $x \in V$. If all distance-$2$ vertices $z \in V$ from $x$ have precisely $2$ common neighbours with $x$, the $\sigma$-curvature of $G$ vanishes, that is, $K^\sigma_\infty(x) = 0$. 
    \item[(b)] Let $x \in V$. If all distance-$2$ vertices $z\ \in V$ from $x$ have at least $b \ge 3$ common neighbours with $x$, the graph $G$ has strictly positive $\sigma$-curvature satisfying 
    \begin{equation} \label{eq:Kinfsigmaregbip}
    K_\infty^\sigma(x) \ge \min\left\{2,d\left( 1-\frac{2}{b} \right)\right\} \quad \text{for all $x \in V$.} 
    \end{equation}
    Moreover, \eqref{eq:Kinfsigmaregbip} holds with equality if all distance-$2$ vertices $z\ \in V$ from $x$ have precisely $b \ge 3$ common neighbours with $x$.
\end{itemize}
In particular, all hypercubes are $\sigma$-flat, and all complete bipartite graphs $K_{d,d}$, $d \ge 3$ have constant positive $\sigma$-curvature $K_\infty^\sigma \equiv 1$ for $d=3$ and $K_\infty^\sigma \equiv 2$ for $d\ge 4$.
\end{theorem}
\begin{proof} Let $G=(V,E)$ be a $d$-regular bipartite graph with $\nu(G)=d-\sqrt{d}$ and $x \in V$. Assume that $\sigma$ is a  maximal potential of $G$. We denote the local structure of $x\in V$ as follows:
$$S_1(x):=\{y_1,y_2,\cdots,y_d\},\,\,\text{and}\,\,S_2(x):=\{z_1,z_2,\cdots,z_n\}.$$
Without loss of generality, we can assume $\sigma_{xy_j}=1$ for all $j=1,\dots,d$, by choosing a switching function $\tau$ which locally satisfies
$$\tau(x):=1,\,\,\tau(y_j):=\sigma_{xy_j}^{-1}.$$
We estimate the magnetic Bakry-\'Emery curvature at $x \in V$ from below as follows:

At first, the local matrix $\Gamma^\sigma_2(x)$ corresponding to the sesquilinear form $\Gamma_2^\sigma(f,f)(x)$ for any function $f:V \to \mathbb{C}$ is
\begin{equation}\label{eq:gamma2}
4\Gamma^\sigma_2(x)=
\begin{pmatrix}
\begin{array}{c|ccc|ccc}
   3d+d^2  & -2-2d & \cdots & -2-2d & 0 & \cdots & 0 \\
\hline
   -2-2d  & 2d+2 & \cdots & 2 & -2\sigma_{y_1z_1} & \cdots & -2\sigma_{y_1z_n} \\
  \vdots & \vdots &  \ddots &  \vdots & \vdots &   &  \vdots \\
  -2-2d & 2 & \cdots & 2d+2 & -2\sigma_{y_dz_1} & \cdots & -2\sigma_{y_dz_n}\\
\hline
0 & -2\sigma_{z_1y_1} & \cdots & -2\sigma_{z_1y_d} & d_{z_1}^{-}  &    &    \\
\vdots & \vdots &     &  \vdots &  &  \ddots &  \\
0 & -2\sigma_{z_ny_1} & \cdots & -2\sigma_{z_ny_d} &  &  & d_{z_n}^{-}
\end{array}
\end{pmatrix}.
\end{equation}
Here we set $\sigma_{yz}=0$ if two vertices $y\in V $ and $z\in V$ are not adjacent and $d_z^-$ denotes the in-degree of $z\in S_2(x)$, which is the number of $2$-paths connecting $x\in V$ and $z\in S_2(x)$. By Proposition \ref{prop:regbipartuppbd}, we know that, for any $z\in S_2(x)$, the in-degree satisfies $d_z^-\ge 2$. Let us denote 
$$b:=\min\{d_{z_1}^-,d_{z_2}^-,\cdots,d_{z_n}^-\}\ge 2,$$
and write the matrix (\ref{eq:gamma2}) as the sum of the following two matrices:
\begin{align}
4\Gamma^\sigma_2(x)=&
\begin{pmatrix}
\begin{array}{c|ccc|ccc}
   3d+d^2  & -2-2d & \cdots & -2-2d & 0 & \cdots & 0 \\
\hline
   -2-2d  & 2d+2 & \cdots & 2 & -2\sigma_{y_1z_1} & \cdots & -2\sigma_{y_1z_n} \\
  \vdots & \vdots &  \ddots &  \vdots & \vdots &   &  \vdots \\
  -2-2d & 2 & \cdots & 2d+2 & -2\sigma_{y_dz_1} & \cdots & -2\sigma_{y_dz_n}\\
\hline
0 & -2\sigma_{z_1y_1} & \cdots & -2\sigma_{z_1y_d} & b  &    &    \\
\vdots & \vdots &     &  \vdots &  &  \ddots &  \\
0 & -2\sigma_{z_ny_1} & \cdots & -2\sigma_{z_ny_d} &  &  & b
\end{array}
\end{pmatrix}\label{matrix:gamma2_constant_diagonal}\\
&+\begin{pmatrix}
\begin{array}{c|ccc|ccc}
    0 &  & \cdots & 0 & 0 & \cdots & 0 \\
\hline
   0  & 0 & \cdots & 0 & 0 & \cdots & 0 \\
  \vdots & \vdots &  \ddots &  \vdots & \vdots &   &  \vdots \\
 0 & 0 & \cdots & 0 & 0 & \cdots & 0\\
\hline
0 & 0 & \cdots & 0 & d_{z_1}^{-}-b  &    &    \\
\vdots & \vdots &     &  \vdots &  &  \ddots &  \\
0 & 0 & \cdots & 0 &  &  & d_{z_n}^{-}-b
\end{array}
\end{pmatrix}\label{matrix:positve_semidefnite}.
\end{align}
It is obvious that the second matrix (\ref{matrix:positve_semidefnite}) is positive semidefinite. To obtain a lower magnetic curvature bound, it is sufficient to consider the case when the matrix \eqref{matrix:positve_semidefnite} vanishes and to compute the corresponding curvature matrix $A_\infty$  in this particular case, following the steps given in \cite[Section 3]{HL22}. 

We take the Schur complement of the submatrix $\text{diag}\{b,b,\cdots,b\}$ in the first matrix (\ref{matrix:gamma2_constant_diagonal}) and obtain the following matrix, denoted by $4Q(x)$ to be in agreement with the notation in \cite{HL22}:
\begin{multline*}
4Q(x)=\\
(4\Gamma^\sigma_2(x))_{B_1(x),B_1(x)}-(4\Gamma^\sigma_2(x))_{B_1(x),S_2(x)}(4\Gamma^\sigma_2(x)_{S_2(x),S_2(x)})^{-1}(4\Gamma^\sigma_2(x))_{S_2(x),B_1(x)} =\\
\begin{pmatrix}
\begin{array}{c|ccc}
   3d+d^2 & -2-2d & \cdots & -2-2d  \\
\hline
 -2-2d & 2d+2 & \cdots & 2 \\
 \vdots &  & \ddots &  \\
 -2-2d & 2 & \cdots & 2d+2 
\end{array}
\end{pmatrix}
-\frac{4}{b}
\begin{pmatrix}
   0 & \cdots  & 0 \\
\hline
   \sigma_{y_1z_1}  & \cdots & \sigma_{y_1z_n}\\
   \vdots &   & \vdots \\
   \sigma_{y_dz_n} & \cdots & \sigma_{y_dz_n}
\end{pmatrix}
\begin{pmatrix}
\begin{array}{c|ccc}
   0  & \sigma_{z_1y_1} & \cdots & \sigma_{z_1y_d} \\
    \vdots &  \vdots &   &  \vdots \\
    0 & \sigma_{z_ny_1} & \cdots & \sigma_{z_ny_d}
\end{array}
\end{pmatrix}.
\end{multline*}
The diagonal entries of the matrix product in the second term are $d-1$, except for the first entry, which vanishes. Moreover, the off-diagonal $(j,k)$-entry is $\sum_{l=1}^n\sigma_{y_jz_l}\sigma_{z_ly_k}$. Any two different vertices $y_j$ and $y_k$ in $S_1(x)$ are not adjacent, since $G$ is bipartite, and the distance between them is $2$, given by $y_j \sim x\sim y_k$. According to Proposition \ref{prop:regbipartuppbd}, we have
$$\sum_{u\in V}\sigma_{y_ju}\sigma_{uy_k}=\sigma_{y_jx}\sigma_{xy_k}+\sum_{z_l\in S_2(x)}\sigma_{y_jz_l}\sigma_{z_ly_k}=0.$$
Using $\sigma_{xy_j}=1$, this leads to 
$$\sum_{z_l \in S_2(x)}\sigma_{y_jz_l}\sigma_{z_jy_k}=-\sigma_{y_jx}\sigma_{xy_k}=-1,$$
and we obtain the following Schur complement $4Q(x)$:
\begin{equation}
4Q(x)=\begin{pmatrix}\label{matrix:Q(x)}
\begin{array}{c|ccc}
    3d+d^2 & -2-2d & \cdots & -2-2d \\
    \hline
    -2-2d & (2d+2)-\frac{4}{b}(d-1) & \cdots & 2+\frac{4}{b} \\
    \vdots &  &  \ddots &  \\
    -2-2d & 2+\frac{4}{b} & \cdots & (2d+2)-\frac{4}{b}(d-1)
\end{array}    
\end{pmatrix}.
\end{equation}
As in \cite[Section]{HL22}, we now introduce the non-singular matrix $B$, given via the formulas (3.4), (3.6) and (3.7) in \cite{HL22}:
$$B:=\begin{pmatrix}
\begin{array}{c|ccc}
    1 & 1 & \cdots & 1 \\
    \hline
     & 1 &  & \\
     &   & \ddots &  \\
     &   &     &  1
\end{array}
\end{pmatrix}.
$$
Computing the matrix $B(4Q(x))\overline{B}^\top$ yields:
\begin{equation} \label{eq:BQB}
B(4Q(x))\overline{B}^\top=\begin{pmatrix}
\begin{array}{c|ccc}
   d^2-d  & 2d-2 & \cdots & 2d-2 \\
   \hline
   2d-2  & (2d+2)-\frac{4}{b}(d-1) & \cdots & 2+\frac{4}{b}\\
   \vdots &  & \ddots &   \\
   2d-2 & 2+\frac{4}{b} & \cdots & (2d+2)-\frac{4}{b}(d-1)
\end{array}
\end{pmatrix}.
\end{equation}
This leads to the following expression of the curvature matrix $A_\infty$ in \cite[(3.17) of Definition 3.1]{HL22}, by using the fact that $(a,\omega^\top)$ in (3.17) agrees with the first row of \eqref{eq:BQB}:
$$ A_{\infty}(G,x,\sigma,B) = 
\begin{pmatrix}
   (d+1)-\frac{2}{b}(d-1) &  \cdots & 1+\frac{2}{b} \\
    \vdots & \ddots & \vdots \\
    1+\frac{2}{b} & \cdots & (d+1)-\frac{2}{b}(d-1)
\end{pmatrix}
-\frac{2(d-1)}{d}\mathbf{J}_d.
$$
Here $\mathbf{J}_d$ is the all-one matrix of size $d\times d$. We rewrite this matrix as follows:
\begin{align*}
A_{\infty}(G,x,\sigma,B)=&\mathbf{J}_+d+d\cdot\mathbf{I}_d+\frac{2}{b}
\begin{pmatrix}
    -(d-1) & \cdots & 1 \\
    \vdots & \ddots & \vdots \\
    1 & \cdots & -(d-1)
\end{pmatrix}-\frac{2(d-1)}{d}\mathbf{J}_d\\
=&\mathbf{J}_d+d\cdot\mathbf{I}_d+\frac{2}{b}(-\Delta_{K_d})-\frac{2(d-1)}{d}\mathbf{J}_d.
\end{align*}
Here $\Delta_{K_d}$ is the standard Laplacian matrix of the complete graph $K_d$.

For the constant vector $\mathbf{1}_d$, we have
$$A_{\infty}(G,x,\sigma,B)\mathbf{1}_d=2 \cdot \mathbf{1}_d,$$
and for any vector $v\perp \mathbf{1}_d$, we have
$$A_{\infty}(G,x,\sigma,B)v=d(1-\frac{2}{b})v.$$
Therefore, the curvature matrix $A_{\infty}(G,x,\sigma,B)$ has only the distinct eigenvalues $2$ and $d(1-\frac{2}{b})$. Consequently, by \cite[Theorem 3.2]{HL22} and the fact that the matrix \eqref{matrix:positve_semidefnite} is positive semidefinite, the Bakry-\'Emery curvature at $x\in V$ has the following lower bound:
$$K^\sigma_{\infty}(x)\ge\min\left\{2,d\left(1-\frac{2}{b}\right)\right\}.$$

In particular, if all the distance-$2$ vertices $z\in S_2(x)$ of a vertex $x \in v$ have precisely $2$ common neighbors with $x$, i.e. $d_z^-=2$ for any $z\in S_2(x)$, the matrix in  (\ref{matrix:positve_semidefnite}) vanishes, leading to the equality $K^\sigma_{\infty}(x)=0$, which completes $(a)$. An example for this class is the hypercubes.

Moreover, if $b=3$, we have $K^\sigma_{\infty}(x)\ge 1$ and, if $b\ge 4$, we have $K^\sigma_{\infty}(x) \ge 2$. These lower bounds are again attained if all in-degrees of distance-$2$ vertices of $x$ are equal.
\end{proof}

\begin{remark} In comparison, the standard Bakry-\'Emery curvature (for $\sigma = \sigma_0$) of all complete bipartite graphs $K_{d,d}$, $d \ge 1$, and all hypercubes $Q^d$, $d \ge 1$, is also constant and given by $K_\infty \equiv 2$. This holds also for the graphs $G(W_j)$, $j=5,6,7,8$ in Remark \ref{rem:UWnd}. However, the infinite family $G(E_{2m})$, $m \ge 2$, are flat with respect to the standard Bakry-\'Emery curvature and also bone-idle with respect to the Ollivier Ricci curvature (see \cite[Definition 7.1]{BCLMP18} for the definition).
\end{remark}

\section{Graph constructions} \label{sec:graphconstructions}

In this section, we discuss the behaviour of our magneto-spectral invariant under various graph constructions.

\subsection{Connecting two non-adjacent vertices}

\begin{lemma} \label{lem:addedge}
    Let $G=(V,E)$ be a graph and $\tilde G=(V,\tilde E)$ be obtained from $G$ by adding an edge between two distinct vertices of $G$. Then we have
    $$ \nu(\tilde G) \ge \nu(G). $$
\end{lemma}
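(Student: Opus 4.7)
The plan is to exploit the variational characterization of $\lambda_1^\sigma$ via the Rayleigh quotient and the fact that $\tilde G$ has the same vertex set as $G$. This lets us compare Rayleigh quotients over a common function space $C(V,\C)$ and reduce everything to a monotonicity statement.

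First I would fix an arbitrary magnetic potential $\sigma: E^{or}(G) \to S^1$ on $G$ and extend it to a magnetic potential $\tilde\sigma: E^{or}(\tilde G) \to S^1$ on $\tilde G$ by choosing any value $\tilde\sigma(u,v) \in S^1$ on the newly added oriented edge $(u,v)$ (with $\tilde\sigma(v,u) = \tilde\sigma(u,v)^{-1}$); on every other oriented edge set $\tilde\sigma = \sigma$. Since $V(\tilde G) = V(G)$, the spaces $C(V,\C)$ coincide for $G$ and $\tilde G$, so the Rayleigh quotients $\RR^\sigma$ and $\RR^{\tilde\sigma}$ are defined on the same function space and share the same denominator $\sum_x |f(x)|^2$.

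The key observation is that the numerator of $\RR^{\tilde\sigma}(f)$ differs from that of $\RR^{\sigma}(f)$ only by the extra non-negative summand $|f(u) - \tilde\sigma(u,v) f(v)|^2$ coming from the new edge $\{u,v\}$. Hence for every $f \neq 0$,
\begin{equation*}
\RR^{\tilde\sigma}(f) \;=\; \RR^\sigma(f) \;+\; \frac{|f(u)-\tilde\sigma(u,v)f(v)|^2}{\sum_{x\in V}|f(x)|^2} \;\ge\; \RR^\sigma(f).
\end{equation*}
Taking the infimum over $f \neq 0$ and using the variational characterization yields
\begin{equation*}
\lambda_1^{\tilde\sigma}(\tilde G) \;=\; \inf_{f\neq 0} \RR^{\tilde\sigma}(f) \;\ge\; \inf_{f\neq 0} \RR^\sigma(f) \;=\; \lambda_1^\sigma(G).
\end{equation*}
Therefore $\nu(\tilde G) \ge \lambda_1^{\tilde\sigma}(\tilde G) \ge \lambda_1^\sigma(G)$, and taking the supremum over all magnetic potentials $\sigma$ on $G$ gives $\nu(\tilde G) \ge \nu(G)$.

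There is essentially no obstacle here; the argument is a routine monotonicity of Rayleigh quotients under addition of non-negative terms, made possible by the coincidence of the vertex sets. The only mild subtlety worth flagging is that one really does need the vertex sets to agree so that the function spaces and hence the infima are over identical domains, and that the choice of $\tilde\sigma(u,v) \in S^1$ is completely immaterial for the inequality (any extension works).
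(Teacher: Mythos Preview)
Your proof is correct and follows exactly the same approach as the paper, which simply notes that adding an edge contributes an additional non-negative term to the numerator of the Rayleigh quotient. The paper states this in one sentence without spelling out the extension of $\sigma$ or the supremum step, but the underlying argument is identical.
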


This is a straightforward consequence of the Rayleigh quotient, since this graph manipulation adds an additional non-negative term in the numerator of the Rayleight quotient. An immediate consequence of the lemma and the results in Example \ref{ex:cycle} is the following

\begin{corollary} Assume that a graph $G=(V,E)$ with $n$ vertices admits a Hamiltonian cycle. Then we have
$$ \nu(G) \ge \nu(C_n) = 2-2\cos(\pi/n). $$
\end{corollary}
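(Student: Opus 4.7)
The plan is to observe that a Hamiltonian cycle in $G$ yields a spanning subgraph of $G$ isomorphic to $C_N$, so $G$ can be constructed from $C_N$ by successively adding the remaining $|E|-N$ edges (between distinct, already-existing vertices), and then to apply Lemma \ref{lem:addedge} iteratively.

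In more detail, enumerate the edges of $G$ not contained in the Hamiltonian cycle as $e_1, e_2, \dots, e_m$ with $m = |E|-N$, and define a chain of graphs
\[ C_N = G_0 \subset G_1 \subset \cdots \subset G_m = G, \]
where $G_i$ is obtained from $G_{i-1}$ by adjoining the edge $e_i$. Lemma \ref{lem:addedge} gives $\nu(G_{i-1}) \le \nu(G_i)$ for every $i$, and chaining these inequalities yields $\nu(C_N) \le \nu(G)$. Combined with the computation $\nu(C_N) = 2 - 2\cos(\pi/N)$ from Example \ref{ex:cycle}, the corollary follows.

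There is essentially no obstacle here; the only mild point is to confirm that the hypothesis of Lemma \ref{lem:addedge} applies at each step (the added edge connects two distinct, previously non-adjacent vertices of $G_{i-1}$), which holds by construction since $G$ is a simple graph and the $e_i$ are pairwise distinct edges of $G$ not already present in $G_{i-1}$.
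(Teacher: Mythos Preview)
Your proof is correct and follows exactly the same approach as the paper: apply Lemma \ref{lem:addedge} iteratively to pass from the Hamiltonian cycle $C_N$ to $G$ by adding the remaining edges, and combine with the computation $\nu(C_N)=2-2\cos(\pi/N)$ from Example \ref{ex:cycle}.
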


\subsection{Adding a bridge between two graphs}

It is obvious that we have for two disjoint graphs $G$ and $H$:
\begin{equation} \label{eq:GHnu} 
\nu(G \cup H) = \min(\nu(G),\nu(H)). 
\end{equation}

\begin{lemma} \label{lem:GHbridge}
  Let $G=(V,E)$ and $H=(V',E')$ be two disjoint graphs and $\tilde G = (\tilde V, \tilde E)$ be obtained by connecting $G$ and $H$ by an edge (bridge). Then we have
  \begin{equation} \label{eq:minmaxsqueeze} 
  \min(\nu(G),\nu(H)) \le \nu(\tilde G) \le \max(\nu(G),\nu(H)). 
  \end{equation}
\end{lemma}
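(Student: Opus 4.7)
My plan is to prove the two inequalities in \eqref{eq:minmaxsqueeze} separately, using Rayleigh-quotient arguments that respect the decomposition of $\tilde V = V \sqcup V'$. Throughout, let $u \in V$ and $v \in V'$ denote the endpoints of the bridge in $\tilde G$.

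For the lower bound, I would pick maximal magnetic potentials $\sigma_G$ on $G$ and $\sigma_H$ on $H$, and glue them to a magnetic potential $\tilde \sigma$ on $\tilde G$ by letting $\tilde \sigma$ take any value on the bridge $(u,v)$. Given any non-zero $f \in C(\tilde V, \C)$, write $f_G = f|_V$ and $f_H = f|_{V'}$. The numerator of $\RR^{\tilde \sigma}(f)$ is the sum of the magnetic Dirichlet energies on $G$ and on $H$, plus the non-negative bridge term $|f(u) - \tilde\sigma(u,v) f(v)|^2$; dropping the bridge term and using the variational characterization on each piece (which remains valid even when $f_G$ or $f_H$ vanishes, since both sides are then zero) gives
$$ \RR^{\tilde \sigma}(f) \, \|f\|^2 \ge \nu(G) \|f_G\|^2 + \nu(H) \|f_H\|^2 \ge \min(\nu(G),\nu(H)) \|f\|^2. $$
Taking the infimum and then the supremum over $\tilde \sigma$ yields $\nu(\tilde G) \ge \min(\nu(G),\nu(H))$.

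For the upper bound, I would start with a maximal magnetic potential $\hat \sigma$ on $\tilde G$, so that $\nu(\tilde G) = \lambda_1^{\hat \sigma}(\tilde G)$, and let $\hat \sigma_G$ and $\hat \sigma_H$ denote its restrictions to $G$ and $H$. Choose eigenfunctions $f_G \in C(V,\C)$ and $f_H \in C(V',\C)$ to $\lambda_1^{\hat \sigma_G}(G)$ and $\lambda_1^{\hat \sigma_H}(H)$ respectively, and extend them by zero to functions $\tilde f_G, \tilde f_H$ on $\tilde V$. I would test $\RR^{\hat \sigma}$ on linear combinations $a \tilde f_G + b \tilde f_H$: expanding the numerator, the two disjoint sums contribute $|a|^2 \lambda_1^{\hat \sigma_G}(G)\|f_G\|^2 + |b|^2 \lambda_1^{\hat \sigma_H}(H)\|f_H\|^2$, plus the bridge term $|a f_G(u) - \hat\sigma(u,v) \, b f_H(v)|^2$. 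The key trick is to choose $(a,b)$ so that the bridge term vanishes: if both $f_G(u) \neq 0$ and $f_H(v) \neq 0$, take $a = \hat \sigma(u,v) f_H(v)$ and $b = f_G(u)$; if one of them vanishes, pick $(a,b)$ with only the relevant coefficient non-zero. In every case, the resulting Rayleigh quotient becomes a convex combination of $\lambda_1^{\hat \sigma_G}(G)$ and $\lambda_1^{\hat \sigma_H}(H)$, hence is bounded above by $\max(\lambda_1^{\hat \sigma_G}(G), \lambda_1^{\hat \sigma_H}(H)) \le \max(\nu(G),\nu(H))$. Together with $\nu(\tilde G) \le \RR^{\hat \sigma}(a \tilde f_G + b \tilde f_H)$, this yields the desired upper bound.

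The lower bound is essentially a routine energy decomposition. The main obstacle is the upper bound, because one has to produce a single test function on $\tilde G$ that \emph{simultaneously} realises both the $G$- and $H$-eigenvalue information of $\hat \sigma$ while killing the bridge contribution; the algebraic choice of $(a,b)$ above is exactly what makes this possible, and the case distinction ensuring that $(a,b) \neq (0,0)$ (so the test function is non-trivial) is the only minor subtlety.
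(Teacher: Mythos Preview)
Your proposal is correct and follows essentially the same route as the paper. For the lower bound the paper simply cites Lemma~\ref{lem:addedge} together with $\nu(G\cup H)=\min(\nu(G),\nu(H))$, whereas you unwind that into a direct Rayleigh-quotient inequality; for the upper bound both arguments pick a maximal potential on $\tilde G$, take eigenfunctions on each piece, and choose scalars so the bridge term vanishes (with the same case split according to whether $f_G(u)$ or $f_H(v)$ is zero), then bound the resulting quotient by the maximum via the mediant/convex-combination inequality.
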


\begin{proof} The lower bound of \eqref{eq:minmaxsqueeze} follows directly from \eqref{eq:GHnu} and Lemma \ref{lem:addedge}. 

For the upper bound, we choose $\hat \sigma: E^{or}(\tilde G) \to S^1$ to satisfy
$$ \nu(\tilde G) = \lambda_1^{\tilde \sigma}(\tilde G). $$
Let $x_0 \in V$ and $z_0 \in V'$ be the bridge in $\tilde G$. Then we have for any nonzero function $f: \tilde V \to \C$,
\begin{multline*} 
\RR^{\hat \sigma}(f) = \\ \frac{{\displaystyle{\sum_{\{x,y\} \in E}}} |f(x)-\hat \sigma(x,y)f(y)|^2 + {\displaystyle{\sum_{\{z,w\} \in E'}}} |f(z)-\hat \sigma(z,w)f(w)|^2 + |f(x_0) - \hat \sigma(x_0,z_0) f(z_0)|^2}{\sum_{x \in V} |f(x)|^2 + \sum_{z \in V'} |f(z)|^2}. 
\end{multline*}
Let $\sigma_1$ and $\sigma_2$ be the restrictions of $\hat \sigma$ to the graphs $G$ and $H$, respectively. Let $f_1: V \to \C$ and $f_2: V' \to \C$ be non-zero functions satisfying 
$$ \RR^{\sigma_1}(f_1) = \lambda_1^{\sigma_1}(G) \quad \text{and} \quad \RR^{\sigma_2}(f_2) = \lambda_1^{\sigma_2}(H). $$
In the case that $f_1(x_0)$ and $f_2(z_0)$ are either both zero or both non-zero, we can assume, after rescaling, that $|f_1(x_0)-\hat \sigma(x_0,z_0)f_2(z_0)| = 0$, and that
$$ f(v) = \begin{cases} f_1(v) & \text{if $v \in V$,} \\ f_2(v) & \text{if $v \in V'$,} \end{cases} $$
is non-zero, too. Then we have
$$ \RR^{\hat \sigma}(f) = \frac{\sum_{\{x,y\}\in E} |f(x)-\hat \sigma(x,y)f(y)|^2 + \sum_{\{z,w\} \in E'} |f(z)-\hat \sigma(z,w)f(w)|^2}{\sum_{x \in V} |f(x)|^2 + \sum_{z \in V'} |f(z)|^2}, $$
and applying
$$ \frac{a+b}{c+d} \le \max\left\{\frac{a}{c},\frac{b}{d}\right\} $$
for $a,b,c,d > 0$, we conclude that
\begin{align*} 
\nu(\tilde G) &= \lambda_1^{\hat \sigma}(\tilde G) \le \RR^{\hat \sigma}(f) \\ &\le \max\left\{\frac{\sum_{\{x,y\} \in E} |f_1(x)-\sigma_1(x,y)f_1(y)|^2}{\sum_{x \in V} |f_1(x)|^2}, \frac{\sum_{\{z,w\} \in E'} |f_2(z)-\sigma_2(z,w)f_2(w)|^2}{\sum_{z \in V'} |f_2(z)|^2} \right\}\\
&= \max\left\{\RR^{\sigma_1}(f_1),\RR^{\sigma_2}(f_2)\right\} = \max\{\lambda_1^{\sigma_1}(G)\,\lambda_1^{\sigma_2}(H)\} \le \max\{\nu(G),\nu(H)\}.
\end{align*}
It remains to consider the case when precisely one of $f_1(x_0), f_2(z_0)$ is zero. Assume, without loss of generality, that $f_1(x_0) = 0$ and $f_2(z_0) \neq 0$.
We set
$$ f(v) = \begin{cases} f_1(v) & \text{if $v \in V$,} \\ 0 & \text{if $v \in V'$.} \end{cases}$$
Note that $f$ is non-zero, and we obtain
\begin{align*} 
\nu(\tilde G) &= \lambda_1^{\hat \sigma}(\tilde G) \le \RR^{\hat \sigma}(f) \\ &\le \frac{\sum_{\{x,y\}\in E} |f_1(x)-\sigma_1(x,y)f_1(y)|^2}{\sum_{x \in V} |f_1(x)|^2}\\
&= \RR^{\sigma_1}(f_1) = \lambda_1^{\sigma_1}(G) \le \nu(G).
\end{align*}
\end{proof}

The next result describes the behaviour of the magneto-spectral height if a dangling edge is added.

\begin{proposition} Let $G=(V,E)$ be a graph and $\tilde G=(\tilde V, \tilde E)$ be obtained by adding a dangling edge to $G$. Then we have
\begin{equation} \label{eq:nutildeGnuG0} 
\nu(\tilde G) \le \nu(G). 
\end{equation}
as well as
\begin{equation} \label{eq:nutilde1} 
\nu(\tilde G) < 1. 
\end{equation}
\end{proposition}

\begin{proof}
Inequality \eqref{eq:nutildeGnuG0} follows from Lemma \ref{lem:GHbridge} by choosing $H$ to be a single vertex, and inequality \eqref{eq:nutilde1} is just a reformulation of \eqref{eq:nuleaf} in Corollary \ref{cor:nudmin}.
\end{proof}

\subsection{Splitting vertices and edges}

We first discuss a graph manipulation involving the splitting of a vertex.

\begin{lemma}\label{lem:splitvertex}
    Let $G=(V,E)$ be a graph and $x \in V$ be a vertex of degree $d=d_1+d_2$. Let $\tilde G=(\tilde V,\tilde E)$ be the graph obtained by splitting $x$ into two non-adjacent vertices $x_1,x_2$, where $x_1$ is adjacent to $d_1$ neighbours of $x$ and $x_2$ adjacent to the other $d_2$ neighbours of $x$. Then we have
    $$ \nu(G) \ge \nu(\tilde G). $$
\end{lemma}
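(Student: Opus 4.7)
The plan is to establish the reverse inequality $\lambda_1^{\tilde\sigma}(\tilde G)\le\lambda_1^\sigma(G)$ for a pair of magnetic potentials $(\sigma,\tilde\sigma)$ that are naturally related through the splitting operation, and then take suprema. Specifically, given any magnetic potential $\tilde\sigma:E^{or}(\tilde G)\to S^1$, I would define a potential $\sigma:E^{or}(G)\to S^1$ as follows: on every edge $\{y,z\}\in E$ with $y,z\ne x$ we set $\sigma(y,z)=\tilde\sigma(y,z)$, and on an edge $\{x,y\}\in E$ we set $\sigma(x,y)=\tilde\sigma(x_i,y)$, where $i\in\{1,2\}$ is the unique index for which $x_i\sim_{\tilde G}y$. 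Since the neighbours of $x$ in $G$ are partitioned between $x_1$ and $x_2$, this definition is unambiguous.

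Next, I would take a minimising function $f:V\to\C$ with $\mathcal{R}^\sigma(f)=\lambda_1^\sigma(G)$ and lift it to $\tilde f:\tilde V\to\C$ by setting
\[
\tilde f(x_1)=\tilde f(x_2)=f(x),\qquad \tilde f(v)=f(v)\ \text{for }v\ne x_1,x_2.
\]
The key computation is a side-by-side comparison of $\mathcal{R}^\sigma(f)$ and $\mathcal{R}^{\tilde\sigma}(\tilde f)$. Because $x_1\not\sim_{\tilde G}x_2$, the edge set $\tilde E$ is in natural bijection with $E$: edges not touching $x$ are identified, and each edge $\{x,y\}\in E$ corresponds to exactly one of $\{x_1,y\}$ or $\{x_2,y\}$ in $\tilde E$. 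Under this bijection the matched summands in the numerator coincide by our choice of $\sigma$ from $\tilde\sigma$, so
\[
\sum_{\{u,v\}\in E}|f(u)-\sigma(u,v)f(v)|^2=\sum_{\{u,v\}\in\tilde E}|\tilde f(u)-\tilde\sigma(u,v)\tilde f(v)|^2.
\]
In the denominator, however, the vertex $x$ has been duplicated, giving
\[
\sum_{v\in\tilde V}|\tilde f(v)|^2=\sum_{v\in V}|f(v)|^2+|f(x)|^2\ \ge\ \sum_{v\in V}|f(v)|^2.
\]
Therefore $\mathcal{R}^{\tilde\sigma}(\tilde f)\le\mathcal{R}^\sigma(f)=\lambda_1^\sigma(G)$, and by the variational characterisation $\lambda_1^{\tilde\sigma}(\tilde G)\le\lambda_1^\sigma(G)\le\nu(G)$. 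Taking the supremum over $\tilde\sigma$ yields $\nu(\tilde G)\le\nu(G)$.

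The only mild subtlety is ensuring that $\tilde f$ is not identically zero, but this is immediate since $f$ is a nonzero eigenfunction and $\tilde f$ agrees with $f$ off $\{x_1,x_2\}$ (and equals $f(x)$ at both $x_1,x_2$). I do not foresee a genuine obstacle; the argument is essentially a one-line Rayleigh quotient comparison once the correspondence $\tilde\sigma\mapsto\sigma$ and the lift $f\mapsto\tilde f$ are set up correctly, the main point being that the numerator is insensitive to the splitting while the denominator strictly grows (weakly, when $f(x)=0$).
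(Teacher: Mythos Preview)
Your proof is correct and follows essentially the same approach as the paper: both exploit the natural bijection between $E$ and $\tilde E$ to match numerators exactly, lift a test function by duplicating the value at $x$, and observe that the denominator can only grow. The only cosmetic difference is that you start from $\tilde\sigma$ and pull back to $\sigma$ (the more natural direction for proving $\nu(\tilde G)\le\nu(G)$), whereas the paper phrases the correspondence starting from $\sigma$; since this correspondence is a bijection, the two presentations are equivalent.
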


\begin{proof}
Note that under this operation, the set of edges is unchanged (with the minor modification that an edge incident to $x$ is now incident to either $x_1$ or $x_2$). Let $\sigma: E^{or}(G) \to S^1$ be a magnetic potential and $f: V \to \C$ be an arbitrary non-zero function. Let $\tilde \sigma: E^{or}(\tilde G) \to S^1$ be the corresponding magnetic potential inheriting the directed edge values of $\sigma$ and $\tilde f: \tilde V \to \C$ be the following function:
$$ \tilde f(z) = \begin{cases} f(z) & \text{if $z \in V \setminus \{x\}$,} \\ f(x) & \text{if $z\in \{x_1,x_2\}$.} \end{cases} $$
If numerator and denominator of $\RR^\sigma_G(f)$ is denoted by $A$ and $B$, then we have
$$ \RR^{\tilde \sigma}_{\tilde G}(\tilde f) = \frac{A}{B - |f(x)|^2 + |\tilde f(x_1)|^2 + |\tilde f(x_2)|^2} = \frac{A}{B + |f(x)|^2} \le \frac{A}{B} = \RR^\sigma_G(f). $$
This finishes the proof.
\end{proof}

This result implies the following interesting lower bounds for graphs with certain vertex degree restrictions. 

\begin{corollary} \label{cor:evendegree}
Let $G=(V,E)$ be a graph whose vertices have even degrees. Then we have
$$ \nu(G) \ge 2 - 2 \cos\left( \frac{\pi}{|E|} \right). $$
Moreover, if this inequality holds with equality, then $G$ is the cycle $C_{|E|}$.
\end{corollary}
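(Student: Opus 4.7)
The plan is to iterate Lemma \ref{lem:splitvertex}, using Eulerian decompositions, to reduce $G$ to the cycle $C_{|E|}$ (for which $\nu = 2 - 2\cos(\pi/|E|)$ by Example \ref{ex:cycle}). The connected case will be handled directly via an Eulerian circuit, disconnected graphs will be reduced via \eqref{eq:GHnu}, and the equality characterization will come from a refined cycle-decomposition argument.

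For the inequality, write $m = |E|$, and assume without loss of generality that $G$ has no isolated vertices (otherwise $\nu(G)=0$, contradicting the positive lower bound). If $G$ has components $G_1, \dots, G_k$ with $m_i = |E(G_i)|$, then once the bound is established for each connected component, \eqref{eq:GHnu} gives
\[
\nu(G) \;=\; \min_i \nu(G_i) \;\ge\; 2 - 2\cos(\pi/\max_i m_i) \;\ge\; 2 - 2\cos(\pi/m),
\]
reducing matters to the connected case. For connected $G$, the even-degree hypothesis yields an Eulerian circuit $W : v_0 e_1 v_1 \cdots e_m v_m$ with $v_m=v_0$, which visits each vertex $x$ exactly $d_x/2$ times and thereby partitions its incident edges into $d_x/2$ consecutive ``in--out'' pairs. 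I then apply Lemma \ref{lem:splitvertex} iteratively: while some vertex $x$ still has degree $\ge 4$, split $x$ into $x_1,x_2$, giving $x_1$ one circuit pair and $x_2$ the remaining edges. This preserves the Eulerian structure, and after finitely many splittings the resulting graph $\widetilde G$ is $2$-regular; since $W$ now traces $\widetilde G$ as a single closed walk of length $m$, one has $\widetilde G \cong C_m$, and iteration of Lemma \ref{lem:splitvertex} yields $\nu(G) \ge \nu(C_m) = 2 - 2\cos(\pi/m)$.

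For the equality case, suppose $\nu(G) = 2 - 2\cos(\pi/m)$. Disconnectedness would force $\max_i m_i \le m-1$ and therefore a strict inequality above, so $G$ must be connected. Assume furthermore that $G \ne C_m$; then any simple cycle $C \subset G$ satisfies $|C| < m$ (otherwise $E(C) = E(G)$ forces $G = C_m$). The edge-complement $G \setminus E(C)$ again has all even degrees and therefore decomposes edge-disjointly into cycles, each of length at most $m - |C| < m$; together with $C$ this yields an edge-disjoint cycle decomposition of $G$ in which every cycle has length strictly less than $m$. Performing the vertex splittings dictated by this decomposition produces $\widetilde G' = \bigsqcup_j C_{k_j}$ with $\max_j k_j < m$, and Lemma \ref{lem:splitvertex} together with \eqref{eq:GHnu} yields $\nu(G) \ge \nu(\widetilde G') > 2 - 2\cos(\pi/m)$, a contradiction. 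The main technical obstacle is formalizing the vertex-splitting procedure: verifying that the edge-pairings at each vertex (coming from the Eulerian circuit, respectively from the cycle decomposition) correspond to an admissible sequence of binary splittings of the form in Lemma \ref{lem:splitvertex}, and that the resulting $2$-regular graph has the claimed global cycle structure (in particular, connectedness of $\widetilde G$ in the first step).
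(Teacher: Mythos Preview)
Your proof is correct and follows essentially the same approach as the paper: iterate Lemma~\ref{lem:splitvertex} to reduce to a $2$-regular graph, then invoke \eqref{eq:GHnu} and the cycle formula from Example~\ref{ex:cycle}. The only differences are organizational---you guide the splittings by an Euler circuit so as to land on a single $C_m$ (the paper splits arbitrarily into a disjoint union of cycles of length $\le m$), and for the rigidity you use a full cycle decomposition into cycles of length $<m$ (the paper instead stops one step before $2$-regularity at a figure-eight and splits the remaining degree-$4$ vertex into two shorter cycles).
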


\begin{proof} Applying the above graph manipulation iteratively, we end up with a new graph $\tilde G$ with the same number of edges and which is $2$-regular. Any $2$-regular graph is a disjoint union of cycles. Assume $C_n$ ($n \le |E|$) is the largest cycle in this disjoint union. Using \eqref{eq:GHnu} and \eqref{eq:nucycle}, we conclude that
$$ \nu(G) \ge \nu(\tilde G) \ge \nu(C_n) = 2-2\cos\left(\frac{\pi}{n}\right) \ge 2-2\cos\left(\frac{\pi}{|E|}\right). $$
This shows
$$ \nu(G) \ge 2 - 2 \cos\left( \frac{\pi}{|E|} \right). $$
If $G$ is not the cycle $C_{|E|}$, then we apply these graph manipulations only until we end up with a graph $\tilde G$ with all but one vertices of degree $2$ and the remaining vertex $x$ of degree $4$. The connected component of $\tilde G$ containing $x$ must be the union of two disjoint cycles glued at the vertex $x$. In the last step, we split $x$ and disconnect this connected component into two disjoint cycles $C_j$ and $C_k$ with $j,k < |E|$. Using \eqref{eq:GHnu}, we conclude that
$$ \nu(G) \ge \nu(\tilde G) \ge \nu(C_k) = 2-2\cos\left(\frac{\pi}{k}\right) > 2-2\cos\left(\frac{\pi}{|E|}\right), $$
proving the rigidity statement.
\end{proof}

\begin{corollary} \label{cor:Cddef}
Introducing
$$
\nu_d := \min_{\text{G with $d_{\min}(G)\ge d$}} \nu(G), 
$$
we have $\nu_2=0$ and, for $d \ge 7$,
$$ \nu_d \ge d-2\sqrt{2}\sqrt{d-1} \to \infty \quad \text{as $d \to \infty$.} $$
In particular, we have 
for every finite graph $G$ with $d_{\min}(G) \ge 7$,
$$ \nu(G) \ge d_{\min} - 2\sqrt{2} \sqrt{d_{\min}-1} \ge \nu_7 \ge 7-4\sqrt{3} \approx 0.0718. $$
\end{corollary}

The proof is, similarly as in the proof of Corollary \ref{cor:evendegree}, a direct consequence of iterative vertex splitting whenever a vertex has degree $\ge 2 d_{\min}$ and the lower Ramanujan type bound \eqref{eq:ramadmindmax} in Proposition \ref{prop:MSS}. Moreover, $\nu_2=0$ follows from the fact that $\nu(C_n) \to 0$ for increasing cycles $C_n$. A challenging question is whether we have $\nu_3 > 0$.

\begin{remark} In view of the above vertex splitting arguments, we observe the following: In order to prove $\nu_3>0$, it would be sufficient to show that there exists a positive lower bound for the magneto-spectral height on all graphs $G$ with minimal vertex degree $\ge 3$ and maximal vertex degree $\le 5$.
\end{remark}

The next result is concerned with the splitting of an edge, that is, replacing an edge by a path of length $2$.

\begin{lemma} \label{lem:splitedge}
Let $G=(V,E)$ be a graph and $\tilde G=(\tilde V,\tilde E)$ be obtained by splitting an edge in $G$. Then we have
$$ \nu(\tilde G) \le \nu(G). $$
More generally, if $\tilde G$ is a subdivision of $G$, then we have
$$ \nu(\tilde G) \le \nu(G). $$
\end{lemma}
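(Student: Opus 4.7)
The plan is to handle the single-edge splitting case; the general subdivision statement then follows by iterating edge-by-edge. Suppose $\tilde G$ is obtained from $G$ by replacing an edge $\{x,y\}\in E$ with a path $x\sim z\sim y$ through a new vertex $z$. Given any magnetic potential $\tilde\sigma$ on $\tilde G$, I would introduce an induced potential $\sigma$ on $G$ by setting $\sigma(x,y):=\tilde\sigma(x,z)\tilde\sigma(z,y)$ (the holonomy along the subdivided path) and letting $\sigma=\tilde\sigma$ on all other oriented edges. The target is a Rayleigh quotient comparison $\lambda_1^{\tilde\sigma}(\tilde G)\le\lambda_1^\sigma(G)$, from which the conclusion follows after taking the supremum over $\tilde\sigma$.

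The comparison proceeds as follows. Given any non-zero test function $f:V\to\C$ on $G$, I would extend it to $\tilde f:\tilde V\to\C$ by $\tilde f|_V:=f$ and
$$\tilde f(z):=\tfrac{1}{2}\bigl(\tilde\sigma(x,z)^{-1}f(x)+\tilde\sigma(z,y)f(y)\bigr).$$
With the abbreviations $p:=f(x)$, $q:=\sigma(x,y)f(y)$, and $w:=\tilde\sigma(x,z)\tilde f(z)=\tfrac{1}{2}(p+q)$, a short computation gives the two contributions involving $z$ as
$$|f(x)-\tilde\sigma(x,z)\tilde f(z)|^2+|\tilde f(z)-\tilde\sigma(z,y)f(y)|^2=|p-w|^2+|w-q|^2=\tfrac{1}{2}|p-q|^2,$$
together with $|\tilde f(z)|^2=|w|^2=\tfrac{1}{4}|p+q|^2$, while the corresponding single-edge contribution on $G$ is $|f(x)-\sigma(x,y)f(y)|^2=|p-q|^2$. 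Writing $A$ for the common sum of squared differences over the edges not equal to $\{x,y\}$ and $D:=\sum_{v\in V}|f(v)|^2$, we obtain
$$\RR^{\tilde\sigma}(\tilde f)=\frac{A+\tfrac{1}{2}|p-q|^2}{D+\tfrac{1}{4}|p+q|^2},\qquad \RR^\sigma(f)=\frac{A+|p-q|^2}{D}.$$
Since the new numerator is smaller and the new denominator is larger, we have $\RR^{\tilde\sigma}(\tilde f)\le\RR^\sigma(f)$; indeed, cross-multiplication reduces this to the manifestly non-negative inequality
$$\tfrac{1}{2}D|p-q|^2+\tfrac{1}{4}A|p+q|^2+\tfrac{1}{4}|p-q|^2|p+q|^2\ge 0.$$
Therefore $\lambda_1^{\tilde\sigma}(\tilde G)\le\RR^{\tilde\sigma}(\tilde f)\le\RR^\sigma(f)$, and taking the infimum over $f$ yields $\lambda_1^{\tilde\sigma}(\tilde G)\le\lambda_1^\sigma(G)\le\nu(G)$.

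The main obstacle is identifying the right extension value $\tilde f(z)$: the midpoint choice $w=\tfrac{1}{2}(p+q)$ is exactly the minimizer of $|p-w|^2+|w-q|^2$, and this saturation of Cauchy--Schwarz produces the clean factor-$\tfrac{1}{2}$ reduction in the numerator contribution. Combined with the nonnegative increment $\tfrac{1}{4}|p+q|^2$ in the denominator, this is precisely what makes the Rayleigh quotient monotone. Once this extension is identified, the verification is algebraic, and the general subdivision statement follows by induction on the number of subdivided edges.
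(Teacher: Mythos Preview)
Your proof is correct and follows essentially the same strategy as the paper's: induce a potential $\sigma$ on $G$ from $\tilde\sigma$ on $\tilde G$ via the holonomy $\sigma(x,y)=\tilde\sigma(x,z)\tilde\sigma(z,y)$, extend a test function on $G$ to $\tilde G$ by assigning a value at $z$, and compare Rayleigh quotients. The only difference is the choice of $\tilde f(z)$: the paper takes the simpler asymmetric value $\tilde f(z)=\hat\sigma(x,z)^{-1}f(x)$, which leaves the numerator exactly unchanged (one new edge contributes $0$, the other reproduces $|p-q|^2$) and increases the denominator by $|f(x)|^2$; you instead take the optimal midpoint, which halves the numerator contribution and still increases the denominator. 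Both choices yield $\RR^{\tilde\sigma}(\tilde f)\le\RR^{\sigma}(f)$, so the argument goes through either way.
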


\begin{proof}
    Let $\{x,y\} \in E$ be the edge which is split with an additional vertex $z$, that is, $\tilde V = V \cup \{z\}$ and 
    $$ \tilde E = \left( E \setminus \{ \{x,y\} \} \right) \cup \{ \{x,z\},\{z,y\}\}. 
    $$
    Let $\hat \sigma$ be a maximal magnetic potential of $\tilde G$, that is,
    $$ \nu(\tilde G) = \lambda_1^{\hat \sigma}(\tilde G). $$
    Let $\sigma_1$ be the magnetic potential on $G$ which agrees with $\hat \sigma$ on the edges $E\setminus\{\{x,y\}\}$ and which satisfies
    $$ \sigma_1(x,y) = \hat \sigma(x,z) \hat \sigma(z,y). $$
    Let $f: V \to \C$ be a non-zero function satisfying
    $$ \lambda_1^{\sigma_1}(G) = \RR^{\sigma_1}(f). $$
    Let $\tilde f: \tilde V \to \C$ be the function which agrees with $f$ on the vertices in $V$ and which satisfies 
    $$ \tilde f(z) = \hat \sigma(x,z)^{-1}f(x). $$
    Then we have
    \begin{align*} 
    \nu(\tilde G) &\le \RR^{\hat\sigma}(\tilde f) \\
    &= \frac{|\tilde f(x)-\hat \sigma(x,z)\tilde f(z)|^2 + |\tilde f(z)-\hat \sigma(z,y)\tilde f(y)|^2 + \displaystyle{\sum_{\{v,w\}\in E\setminus \{\{x,y\}\}}} |\tilde f(v)-\hat \sigma(v,w)\tilde f(w)|^2}{|\tilde f(z)|^2 + \sum_{w \in V} |\tilde f(w)|^2} \\
    &\le \frac{ |f(x)-\hat \sigma(x,z) \hat \sigma(z,y) f(y)|^2 + \displaystyle{\sum_{\{v,w\}\in E\setminus \{\{x,y\}\}}} |f(v)-\sigma_1(v,w)f(w)|^2}{\sum_{w \in V} |f(w)|^2} \\
    &= \RR^{\sigma_1}(f) = \lambda_1^{\sigma_1}(G) \le \nu(G).
\end{align*}
\end{proof}

\subsection{Cartesian products}

\begin{lemma} \label{lem:cartprod} Let $G$ and $H$ be two graphs. Then we have
$$ \nu(G \times H) \ge \nu(G) + \nu(H). $$
\end{lemma}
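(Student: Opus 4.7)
The plan is to construct a well-chosen magnetic potential on $G \times H$ from a pair of near-maximal magnetic potentials on the factors, and then exploit the resulting tensor product structure of the magnetic Laplacian.

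First, for any magnetic potentials $\sigma_G$ on $G$ and $\sigma_H$ on $H$, I would define a magnetic potential $\sigma$ on $G \times H$ by assigning $\sigma_G$-values to the ``horizontal'' edges and $\sigma_H$-values to the ``vertical'' edges; that is, for $x \sim_G y$ and $z \sim_H w$, set
$$ \sigma((x,z),(y,z)) = \sigma_G(x,y), \qquad \sigma((x,z),(x,w)) = \sigma_H(z,w). $$
A direct computation from the defining formula \eqref{eq:magnLap}, splitting the neighbours of $(x,z)$ into horizontal and vertical ones, yields the tensor decomposition
$$ \Delta^\sigma_{G \times H} = \Delta^{\sigma_G}_G \otimes I_H + I_G \otimes \Delta^{\sigma_H}_H. $$

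The second step is to diagonalise this operator. If $\{u_i\}$ is an orthonormal eigenbasis of $\Delta^{\sigma_G}_G$ with eigenvalues $\lambda_i^{\sigma_G}(G)$ and $\{v_j\}$ is an orthonormal eigenbasis of $\Delta^{\sigma_H}_H$ with eigenvalues $\mu_j^{\sigma_H}(H)$, then $\{u_i \otimes v_j\}$ is an orthonormal eigenbasis of $\Delta^\sigma_{G \times H}$ with eigenvalues $\lambda_i^{\sigma_G}(G) + \mu_j^{\sigma_H}(H)$. Consequently, the smallest eigenvalue satisfies
$$ \lambda_1^\sigma(G \times H) = \lambda_1^{\sigma_G}(G) + \lambda_1^{\sigma_H}(H). $$

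Finally, since $\nu(G \times H) \ge \lambda_1^\sigma(G \times H)$ for this particular $\sigma$, taking the supremum over $\sigma_G$ and $\sigma_H$ independently on the right-hand side gives the desired inequality $\nu(G \times H) \ge \nu(G) + \nu(H)$. There is essentially no obstacle here beyond verifying the tensor decomposition of $\Delta^\sigma_{G \times H}$; the crucial point is simply that the Cartesian product admits a natural ``product'' magnetic potential that preserves the commuting structure needed for the spectrum of a sum of tensor operators to be the Minkowski sum of the factor spectra.
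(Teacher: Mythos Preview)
Your proposal is correct and follows essentially the same approach as the paper: both define the product magnetic potential on $G \times H$ in the same way, use the resulting additivity of spectra $\lambda_1^\sigma(G \times H) = \lambda_1^{\sigma_G}(G) + \lambda_1^{\sigma_H}(H)$, and then take the supremum over the factor potentials. The only difference is that you spell out the tensor decomposition $\Delta^\sigma_{G \times H} = \Delta^{\sigma_G}_G \otimes I_H + I_G \otimes \Delta^{\sigma_H}_H$ explicitly, whereas the paper simply cites the spectral additivity as a known fact.
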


\begin{proof}
    We use the following fact for magnetic potentials $\sigma_1$ and $\sigma_2$ on $G$ and $H$, respectively, and their Cartesian product magnetic potential $ \sigma$:
    $$ {\rm{spec}}^{\sigma}(G \times H) = \left\{ \lambda + \mu: \lambda \in {\rm{spec}}^{\sigma_1}(G), \mu \in {\rm{spec}}^{\sigma_2}(H) \right\}, $$
    where ${\rm{spec}}^\sigma$ is the multiset of eigenvalues of the magnetic Laplacian $\Delta^\sigma$. Here $\sigma: E^{or}(G \times H) \to S^1$ is defined as follows:
    $$ \sigma(\{(x,z),(y,z)\}) = \sigma_1(x,y) \quad\text{for all $x \sim_G y$ and $z \in H$} $$
    and
    $$ \sigma(\{(x,z),(x,w)\}) = \sigma_2(z,w) \quad \text{for all $x \in G$ and $z \sim_H w$.} $$
    This implies that
    $$ \lambda_1^{\sigma}(G \times H) = \lambda_1^{\sigma_1}(G) + \lambda_1^{\sigma_2}(H). $$
    Therefore, we have
    \begin{align*} 
    \nu(G \times H) &= \sup_\sigma \lambda_1^\sigma(G \times H) \\ &\ge \sup_{\sigma_1,\sigma_2} \left(\lambda_1^{\sigma_1}(G) + \lambda_1^{\sigma_2}(H) \right) \\ &= \sup_{\sigma_1} \lambda_1^{\sigma_1}(G) + \sup_{\sigma_2 }\lambda_1^{\sigma_2}(H) = \nu(G) + \nu(H). 
    \end{align*}
\end{proof}

\subsection{Suspensions}
\label{sec:suspensions}

Regarding suspensions, our first result reads as follows, by employing the result from Theorem \ref{thm:outdegree_upper_bound} about subgraphs.

\begin{theorem} \label{thm:suspgraph}
Let $G$ be an arbitrary graph without isolated vertices and $\tilde G$ be its suspension. Then we have
\begin{equation} \label{eq:nutildeGnuG} 
1 \le \nu(\tilde G) \le \nu(G)+1. 
\end{equation}
In particular, we have 
\begin{equation} \label{eq:nutildeT} 
\nu(\tilde T) = 1 
\end{equation}
for the suspension $\tilde T$ of every tree $T$ with at least two vertices.
\end{theorem}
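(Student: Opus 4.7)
The upper bound $\nu(\tilde{G}) \le \nu(G)+1$ has already been obtained as Corollary \ref{cor:suspension} (Theorem \ref{thm:outdegree_upper_bound}(b) applied with $G_0=G$ and $d_v^{out}=1$ for every $v \in V(G)$), so my attention falls on the lower bound $\nu(\tilde{G}) \ge 1$ and the sharpness claim for trees. (I note that the statement implicitly assumes $G$ has at least one edge: when $G$ is a single vertex, $\tilde{G}=K_2$ and $\nu=0$.) I would first reduce to the tree case: any connected $G$ with $\ge 1$ edge admits a spanning tree $T \subset G$, and then $\tilde{G}$ is obtained from $\tilde{T}$ by adjoining the edges of $E(G)\setminus E(T)$, so iterating Lemma \ref{lem:addedge} gives $\nu(\tilde{G}) \ge \nu(\tilde{T})$. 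Hence it suffices to prove $\nu(\tilde{T}) \ge 1$ for every tree $T$ with $N:=|V(T)| \ge 2$.

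For a tree, any magnetic potential on $T$ itself is gauge-equivalent to the trivial one, so I may assume $\sigma \equiv 1$ on $E(T)$; the potential on $\tilde{T}$ is then encoded by $\tau(v):=\sigma(x,v) \in S^{1}$. In this gauge
$$\Delta^{\sigma}-I \;=\; \begin{pmatrix} N-1 & -\tau^{T} \\ -\overline{\tau} & \Delta_{T} \end{pmatrix},$$
so $\lambda_{1}^{\sigma}(\tilde{T}) \ge 1$ is equivalent to $\Delta^{\sigma}-I \succeq 0$. Schur complement on the pivot $N-1>0$ turns this into the operator inequality $\Delta_{T} \succeq \tfrac{1}{N-1}\overline{\tau}\tau^{T}$, and evaluating on $\mathbf{1} \in \ker \Delta_{T}$ forces the necessary condition $\sum_{v}\tau(v)=0$. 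Granted that, the inequality is equivalent to the scalar bound $\langle \Delta_{T}^{+}\overline{\tau}, \overline{\tau}\rangle \le N-1$, where $\Delta_{T}^{+}$ is the Moore--Penrose pseudoinverse.

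To control this quadratic form I would exploit that on a tree effective resistance equals graph distance, $(\Delta_{T}^{+})_{uu}+(\Delta_{T}^{+})_{vv}-2(\Delta_{T}^{+})_{uv}=d_{T}(u,v)$. Combined with $\sum\tau=0$ and the edge decomposition $d_{T}(u,v)=\sum_{e\in E(T)}\mathbf{1}[e\in\mathrm{path}_{T}(u,v)]$, a short telescoping yields
$$\langle \Delta_{T}^{+}\overline{\tau}, \overline{\tau}\rangle \;=\; \sum_{e\in E(T)}\bigl|S^{(e)}(\tau)\bigr|^{2}, \qquad S^{(e)}(\tau):=\sum_{v\in T_{1}^{(e)}}\tau(v),$$
where $T_{1}^{(e)}$ is either component of $T\setminus\{e\}$ (both choices give the same absolute value because $\sum\tau=0$). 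Since $|E(T)|=N-1$, my task reduces to constructing $\tau:V(T)\to S^{1}$ with $\sum\tau=0$ whose partial sums $|S^{(e)}|^{2}$ average at most $1$ over the edges of $T$. Every leaf edge contributes exactly $1$ regardless of $\tau$, so the burden is on the internal edges.

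I would construct such a $\tau$ by partitioning $V(T)$ into small blocks whose $\tau$-values already sum to zero --- pairs carrying $(1,-1)$ and triples carrying the cube roots of unity $(1,\omega,\omega^{2})$ with $\omega=e^{2\pi i/3}$ --- laid out compatibly with a DFS traversal of $T$, so that each internal edge separates at most one unfinished block and hence $|S^{(e)}| \le 1$ there. The cube-root triples are unavoidable: pairs alone fail when $N$ is odd, and already for the path $P_{3}$ Example \ref{ex:countexquest} confirms that the maximum is attained via $2\pi/3$-holonomies rather than by $\pm 1$ values. The main obstacle is verifying that such a block decomposition can always be completed for an arbitrary tree while respecting both $|\tau(v)|=1$ and $\sum\tau=0$; the awkward cases will be trees whose leaf structure does not split naturally into pairs and triples along a DFS order, and may require an induction on $N$ in which a leaf block is removed and the argument re-run on the smaller tree, or a global compactness/perturbation argument on the constrained torus $\{\tau\in(S^{1})^{V}:\sum\tau=0\}$. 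Once $\nu(\tilde{T})\ge 1$ is secured, the sharpness $\nu(\tilde{T})=1$ follows by combining with $\nu(\tilde{T})\le\nu(T)+1=1$, since Lemma \ref{lem:treenu} gives $\nu(T)=0$.
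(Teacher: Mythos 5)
Your reductions are sound and, in fact, essentially a repackaging of the paper's own argument: the reduction to a spanning tree via Lemma \ref{lem:addedge}, the gauge normalization making the potential trivial on $E(T)$, and the reformulation of $\lambda_1^\sigma(\tilde T)\ge 1$ as positive semidefiniteness of $\Delta^\sigma-\mathrm{Id}$ all appear in the paper. Your Schur-complement/effective-resistance identity $\langle \Delta_T^{+}\overline{\tau},\overline{\tau}\rangle=\sum_{e}|S^{(e)}(\tau)|^2$ (valid since on a tree the potential $f=\Delta_T^{+}\overline{\tau}$ has edge differences equal to the subtree sums of the divergence) is precisely the flow version of the paper's sum-of-squares certificate $Q=\sum_{(j,k)\in I}|b_{jk}z+z_j-z_k|^2$: the edge values $b_{jk}$ there are exactly (up to sign) your subtree sums $S^{(e)}$, and the zero-sum condition $\sum_j a_j=0$ is your necessary condition $\sum_v\tau(v)=0$. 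Your side remark that the statement needs $G$ to have an edge (a single vertex gives $\tilde G=K_2$, $\nu=0$) is a fair observation about the trivial case.

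The genuine gap is the decisive existence step: you must produce, for an \emph{arbitrary} tree, unit-modulus values $\tau:V(T)\to S^1$ with $\sum_v\tau(v)=0$ and $\sum_{e\in E(T)}|S^{(e)}(\tau)|^2\le N-1$, and your DFS-block construction does not deliver this. In a DFS preorder the subtree below an internal edge is a contiguous interval, but its two endpoints can each cut into a different unfinished block, so the interval sum is a suffix of one block plus a prefix of another and can have modulus up to $2$ (contributing $4$ to the sum); you give no argument that the pair/triple blocks can always be aligned with all subtree boundaries simultaneously, and you yourself flag that the completion (induction on $N$, or a compactness/perturbation argument on $\{\tau\in(S^1)^V:\sum\tau=0\}$) is unresolved. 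This is exactly where the paper does its real work: the Claim in the proof of Theorem \ref{thm:suspgraph} constructs edge values $b_{jk}$ in the sixth roots of unity $S_6$ so that every induced vertex value $a_j=\sum_{k:(j,k)\in I}b_{jk}-b_{j_0j}$ again lies in $S_6$, proved by induction over rooted subtrees, rotating the child solutions by factors $s_j\in S_6$ so consecutive contributions cancel pairwise and using $\xi_6^{l}+\xi_6^{l+2}+\xi_6^{l+4}=0$ for the leftover term; this forces all $|S^{(e)}|=1$ and hence equality $\sum_e|S^{(e)}|^2=N-1$. Without an argument of this kind (or a proof that your block layout can always be completed), your proposal establishes only the upper bound and the reduction, not the lower bound $\nu(\tilde G)\ge 1$.
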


\begin{proof} 

For the upper bound $\nu(\tilde G) \le  \nu(G)+1$, we consider $G$ as the subgraph of $\tilde{G}$. For each vertex $x\in G$, we have $d_x^{out}=1$, and the upper bound follows from Theorem \ref{thm:outdegree_upper_bound}(b).

Regarding the lower bound for $\nu(\tilde G)$, we can assume without loss of generality that $G$ is connected: This follows from Lemma \ref{lem:splitvertex}  and the fact that, via splitting of vertices, the suspension of a union of disjoint graphs can be transformed into the disjoint union of the suspension of these disjoint graphs. So we are left to consider a connected graph $G$ with at least $2$ vertices. Moreover, it is sufficient to prove the lower bound $1$ for $\nu(\tilde G)$ for trees with at least $2$ vertices, due to Lemma \ref{lem:addedge}. The result \eqref{eq:nutildeT} for the suspension of trees $T$ with at least $2$ vertices follows then immediately from \eqref{eq:nutildeGnuG} and $\nu(T)=0$.

Let us now provide the proof of the lower bound in \eqref{eq:nutildeGnuG} for a tree $T=(V,E)$ with $V = \{x_1,\dots,x_t\}$, $t \ge 2$. Let $x$ be the suspension vertex of $\tilde T$. Any $\sigma: E^{or}(\tilde T) \to S^1$ can be switched to a magnetic potential which is trivial on the edges of the tree and assumes the values $a_j = -e^{i \alpha_j}$ on the directed edge 
$(x,x_j) \in E^{or}(\tilde T)$. The corresponding Laplacian is represented by the following matrix:
$$ \Delta^\sigma \cong \begin{pmatrix} \begin{array}{c|ccccc}t & a_1 & a_2 & \cdots & a_{t-1} & a_t \\
\hline
\bar a_1 & & & & & \\
\bar a_2 & & & & & \\
\vdots & & & \Delta_T + {\rm{Id}} & & \\
\bar a_{t-1} & & & & & \\
\bar a_t & & & & &
\end{array}
\end{pmatrix},
$$
where $\Delta_T$ is the non-normalized standard Laplacian
of the tree $T$. Under the assumption that the values $a_1,\dots,a_t \in S^1$ satisfy
\begin{equation} \label{eq:sumaj}
\sum_{j=1}^t a_j = 0,
\end{equation}
the vector $(0,1,\dots,1)^\top$ is an eigenvector of $\Delta^\sigma$ to the eigenvalue $1$ since any constant vector is an eigenvector of $\Delta_T$ to the eigenvalue $0$. It remains to prove that $1$ is the smallest eigenvalue of $\Delta^\sigma$ for a suitable choice of $a_1,\dots,a_t \in S^1$ satisfying \eqref{eq:sumaj}, since we already know from Example \ref{ex:treesuspension} that $\nu(\tilde T) \le 1$. We prove this by showing that we can choose $a_1,\dots,a_t \in S^1$ such that 
$$ \Delta^\sigma - {\rm{Id}} \cong \begin{pmatrix} \begin{array}{c|ccccc}t-1 & a_1 & a_2 & \cdots & a_{t-1} & a_t \\
\hline
\bar a_1 & & & & & \\
\bar a_2 & & & & & \\
\vdots & & & \Delta_T & & \\
\bar a_{t-1} & & & & & \\
\bar a_t & & & & &
\end{array}
\end{pmatrix} $$
is positive semidefinite. The corresponding quadratic form for the complex variables $z,z_1,\dots,z_t \in \C$ is given by
\begin{equation} \label{eq:Q}
Q=(t-1) |z|^2 + \sum_{j=1}^t (a_j z \bar z_j + \bar a_j \bar z z_j) + \sum_{j=1}^t d_{x_j} |z_j|^2 - \sum_{\{x_j,x_k\} \in E} (z_j \bar z_k + \bar z_j z_k).
\end{equation}
We consider $T$ to be a rooted tree with a leaf root $x_1$ and assume the other vertices $x_2,\dots,x_t$ to be ordered with respect to the distance to the root. Note that every vertex in $T$ has at most one parent and potentially several children. Let $I$ be the set of index pairs $(k,l)$ corresponding to oriented edges $x_k \sim x_l$ in $T$ with $x_k$ the parent of $x_l$. Let 
$$ S_6 = \{\pm 1, \pm \omega, \pm \omega^2\} $$
be the set of all $6$-th roots of unity with $\omega = e^{2\pi i/3}$. The following claim allows us to find a suitable choice of $a_1,\dots,a_t \in S^1$ with the required properties.

\medskip

{\bf{Claim:}} There exist a family of values $\{ b_{jk} \}_{(j,k) \in I}$ in $S_6$ which satisfies the following: For any vertex $x_j \in T$ we have  
\begin{equation} \label{eq:tildeaj} 
a_j := \left( \sum_{k: (j,k) \in I} b_{jk}\right) - b_{j_0j} \in S_6, 
\end{equation}
where $x_{j_0} \in T$ is the parent of $x_j$.
Here we use the convention that, for vertices without parents or without children, the corresponding term is chosen to be zero.

\medskip

\begin{figure}
\begin{center}
\begin{tikzpicture}[thick,scale=1]
\filldraw [black] (0,5) circle (4pt);
\filldraw [black] (2,5) circle (2pt);
\filldraw [black] (4,3) circle (2pt);
\filldraw [black] (4,7) circle (2pt);
\filldraw [black] (6,1) circle (2pt);
\filldraw [black] (6,3) circle (2pt);
\filldraw [black] (6,5) circle (2pt);
\filldraw [black] (6,7) circle (2pt);
\draw (0,5) node[label=above:$a_1$]{};
\draw (2,5) node[label=above:$a_2$]{};
\draw (4,7) node[label=above:$a_3$]{};
\draw (4,3) node[label=above:$a_4$]{};
\draw (6,7) node[label=above:$a_5$]{};
\draw (6,5) node[label=above:$a_6$]{};
\draw (6,3) node[label=above:$a_7$]{};
\draw (6,1) node[label=below:$a_8$]{};
\draw (1,5) node[label=above:$b_{12}$]{};
\draw (3,6.1) node[label=above:$b_{23}$]{};
\draw (3,3) node[label=above:$b_{24}$]{};
\draw (5,7) node[label=above:$b_{35}$]{};
\draw (5,4) node[label=above:$b_{46}$]{};
\draw (5,2.8) node[label=above:$b_{47}$]{};
\draw (5,1.2) node[label=above:$b_{48}$]{};
\draw (0,5) -- (2,5);
\draw (2,5) -- (4,7);
\draw (2,5) -- (4,3);
\draw (4,7) -- (6,7);
\draw (4,3) -- (6,5);
\draw (4,3) -- (6,3);
\draw (4,3) -- (6,1);
\end{tikzpicture}
\end{center}
\caption{The relation between the values $a_j$ and $b_{jk}$. For example, we have $a_2 = b_{23}+b_{24}-b_{12}$. \label{fig:ajbjk}}
\end{figure}

Before proving the claim, we now explain how to derive positive semidefiniteness of the matrix $\Delta^\sigma - {\rm{Id}}_{t+1}$ with a magnetic potential corresponding to the $a_j$'s, derived from the $b_{jk}$'s via \eqref{eq:tildeaj} (see Figure \ref{fig:ajbjk} for a visual illustration of these values). We first need to verify that we have $\sum a_j = 0$. This can be verified as follows:
\begin{align*} 
\sum_j a_j &= \left( \sum_{j \in V} \sum_{k: (j,k) \in I} b_{jk} \right) - \left( \sum_{j \in V} b_{j_0(j)j} \right) \\ 
&= \left( \sum_{(j,k) \in I} b_{jk} \right) - \left( \sum_{(j,k) \in I} b_{jk}\right) = 0
\end{align*}
where $x_{j_0(j)}$ is the parent of $x_j$.
Next we show positive semidefiniteness of the quadratic form $Q$ in \eqref{eq:Q}: We have
\begin{align*}
0 &\le \sum_{(j,k) \in I} |b_{jk} z + z_j - z_k|^2 \\
&= \left( \sum_{(j,k) \in I} |b_{jk}|^2 \right) |z|^2 + \sum_{j=1}^t d_{x_j} |z_j|^2 - \sum_{\{x_j,x_k\} \in E} (z_j \bar z_k + \bar z_j z_k) \\
& \,\,\,\,\,+\sum_{j =1}^t \underbrace{\left( \sum_{k: (j,k) \in I} b_{jk} - b_{j_0(j)j} \right)}_{= a_j} z \bar z_j + \sum_{j =1}^t \underbrace{\left( \sum_{k: (j,k) \in I} \bar b_{jk} - \bar b_{j_0(j)j} \right)}_{= \bar a_j} \bar z z_j \\
& = Q, 
\end{align*}
since $|I| = t-1$ and $|b_{jk}| = 1$.

\smallskip

It remains to show the claim, which we prove via Induction. The base case is $T=K_2$ (that is, $t=2$). In this case we choose $b_{12}=1$ and obtain $a_1=1$, $a_2=-1$. Assume the claim holds for any tree of size $t \ge 2$. Let $T$ be a rooted tree with root $x_1$ and $t+1$ vertices. Since $x_1$ is a leaf, it has precisely one child, namely $x_2$. Let $d=d_{x_2}$. Then $x_2$ has precisely $d-1$ children $x_3,\dots,x_{d+1}$, and we can consider the rooted subtrees $T_j$, $j=3,4,d+1$ of $T$, where $T_j$ has the leaf root $x_2$ and is connected to $x_j$. Hence, the trees $T_j$ have only one vertex in common, namely, $x_2$.
By induction hypothesis, each $T_j$ has a system of variables $\{b_{jk}\}$ corresponding to the edges of $T_j$, satisfying the claim. It is easy to see that there exist $b,s_3,\dots,s_{d+1} \in S_6$ such that
$$ \left( \sum_{j=3}^{d+1} s_j b_{2j} \right) -b \in S_6, $$
by choosing the factors $s_j$ such that two consecutive terms $s_{2l+1} b_{2l+1}$ and $s_{2l+2} b_{2l+2}$ in the above sum cancel each other out and using the fact that $\xi_6^l+\xi_6^{l+2}+\xi_6^{l+4}=0$ for the choice of $b$. A suitable family of $\{ b_{jk} \}$ for the tree $T$ is now given by $b_{12}=b$ and $b_{kl} = s_j b_{kl}$ for edges $\{x_k,x_l\}$ in the subtree $T_j$. This leads to $a_1 = b_{12} \in S_6$ and $a_2 = \sum s_j b_{2j} - b \in S_6$, completing the induction step.
\end{proof}

The next results are concerned with graphs $G$ whose suspensions $\tilde G$ satisfy \eqref{eq:nutildeGnuG} with equality, that is $\nu(\tilde G)= \nu(G)+1$. Recall that we have shown in Example \ref{ex:completegraph} and Example \ref{ex:wheel} that $\nu(\tilde G) = \nu(G)+1$ holds for complete graphs and cycles.

\begin{proposition} \label{prop:maxsusprel} Let $G=(V,E)$ be a graph and $\tilde G$ be its suspension. If 
$$ \nu(\tilde G) = \nu(G)+1, $$
then the restriction $\sigma$ of any maximal potential $\tilde \sigma$ of $\tilde G$ is maximal in $G$. Moreover, any $\lambda_1^\sigma(g)$-eigenfunction $f: V \to \mathbb{C}$ of $G$ can be trivially extended to a $\lambda_1^{\tilde \sigma}(\tilde G)$-eigenfunction $\tilde f: \tilde V \to \mathbb{C}$ of $\tilde G$.

Conversely, if $G$ has a unique maximal potential $\sigma$, up to gauge equivalence, then this potential can be extended to a maximal potential $\tilde \sigma$ on $\tilde G$.
\end{proposition}

\begin{proof}
  Let $\sigma, \tilde \sigma, f, \tilde f$ be as in the proposition. Then we have
  \begin{multline*}
  \nu(\tilde G) = \lambda_1^{\tilde \sigma}(\tilde G) \le \mathcal{R}^{\tilde \sigma}(\tilde f) = \frac{\sum_{\{u,v\} \in E} |f(u)-\sigma(u,v)f(v)|^2 + \sum_{u \in V} |f(u)|^2}{\sum_{u \in V} |f(u)|^2} = \\ \lambda_1^\sigma(G) + 1 \le \nu(G)+1.
  \end{multline*}
  For the converse direction and a given maximal potential $\sigma$ in $G$, we choose an arbitrary maximal potential $\tilde \sigma_0$ on $\tilde G$ and know that its restriction $\sigma_0$ to $G$ is gauge equivalent to $\sigma$: $\sigma_0 = \sigma^\tau$. Choose the extension $\tilde \tau: \tilde V \to S^1$ of $\tau$ with $\tilde \tau(x_0) = 1$ for the suspension vertex $x_0 \in \tilde V$. Then the potential of $\tilde \sigma:=\tilde \sigma_0^{\tau^{-1}}$ is maximal on $\tilde G$ and an extension of the maximal potential $\sigma$ of $G$.
\end{proof}

Let us present the following application of this result, which generalizes the observation in Example \ref{ex:countexquest} (where $G$ is the suspension of the star graph $S_2$). Moreover, this application shows the surprising fact that there exist finite graphs with uncountably many non gauge equivalent maximal potentials.

\begin{corollary} \label{cor:suspstargraph} Let $S_d$ be the star graph with center $x_1$ and $d$ leaves $x_2,\dots,x_{d+1}$, and $\tilde S_d$ be its suspension with suspension vertex $x_{d+2}$. Then a magnetic potential is maximal if and only if it is gauge equivalent to a potential $\tilde \sigma$ satisfying
\begin{equation} \label{eq:maxpotstar1}
\tilde \sigma(x_1,x_j) = 1 \quad \text{for $j=2,\dots,d+1$}, 
\end{equation}
and
\begin{equation} \label{eq:maxpotstar2}
\tilde \sigma(x_j,x_{d+2}) = a_j \in S^1 \quad \text{for $j=1,\dots,d+1$}, 
\end{equation}
with
\begin{equation} \label{eq:sumaj} 
\sum_{j=1}^{d+1} a_j = 0. 
\end{equation}
In particular, if $d \ge 3$, $\tilde S_d$ has uncountably many non gauge equivalent maximal potentials.
\end{corollary}

\begin{proof} Note first that any potential $\tilde \sigma$ on $\tilde S_d$ is gauge equivalent to one satisfying the conditions \eqref{eq:maxpotstar1} an \eqref{eq:maxpotstar2}. Therefore, we can restrict our consideration to such potentials. Since $S_d=(V,E)$ is a tree, we have $\nu(\tilde S_d) = 1$ by Theorem \ref{thm:suspgraph}. 

Let $\tilde \sigma$ be a maximal potential of $\tilde S_d$ satisfying \eqref{eq:maxpotstar1} and \eqref{eq:maxpotstar2}. The restriction of $\tilde \sigma$ to $S_d$ is the trivial potential $\sigma_0$, and the constant $1$-function $f$ is an eigenfunction of $\lambda_1^{\sigma_0}(S_d)=0$. Applying Proposition \ref{prop:maxsusprel}, the trivial extension $\tilde f: \tilde V \to \{0,1\}$ of $f$ is an eigenfunction of $\lambda_1^{\tilde \sigma}(\tilde S_d)$. The eigenvalue equation at the suspension vertex $x_{d_2}$ is 
$$ 0 = \Delta f(x_{d+2}) =  \sum_{j=1}^{d+1} (f(x_{d+2})-\bar a_j f(x_j)) = - \sum_{j=1}^{d+1} \bar a_j. $$
Hence, \eqref{eq:sumaj} is a necessary condition for $\tilde \sigma$ satisfying \eqref{eq:maxpotstar1} and \eqref{eq:maxpotstar2} to be a maximal potential. For the converse direction, we show that $1$ is the smallest $\Delta^{\tilde \sigma}$-eigenvalue of any potential $\tilde \sigma$ satisfying \eqref{eq:maxpotstar1}, \eqref{eq:maxpotstar2} and \eqref{eq:sumaj}. The matrix representation of $\Delta^\sigma - {\rm{Id}}$ with respect to the vertex enumeration $x_1,x_{d+2},x_2,\dots,x_{d+1}$ is of the form
$$ \Delta^\sigma - {\rm{Id}} \cong \begin{pmatrix} \begin{array}{cc|ccc}d & -a_1 & -1 & \cdots & -1 \\
-\bar a_1 & d & -\bar a_2 & \cdots & - \bar a_{d+1} \\
\hline
-1 & -a_2 & 1 & \cdots &0 \\
\vdots & \vdots & \vdots & \ddots & \vdots \\
-1 & - a_{d+1} & 0 & \cdots & 1
\end{array}
\end{pmatrix}, $$
and is easy to see that the vector $(1,0,1,\dots,1)^\top$ is an eigenvector to the eigenvalue $0$ of this matrix.
This matrix is positive semidefinite if and only if its Schur complement 
$$ \begin{pmatrix} d & -a_1 \\ -\bar a_1 & d \end{pmatrix} -
\begin{pmatrix} -1 & \cdots & -1 \\ - \bar a_2 & \cdots & - \bar a_{d+1} \end{pmatrix} \begin{pmatrix} -1 & - a_2 \\ \vdots & \vdots \\ -1 & - a_{d+1} \end{pmatrix} = \begin{pmatrix} 0 & - \sum_{j=1}^{d+1} a_j \\ -\sum_{j=1}^{d+1} \bar a_j & 0 \end{pmatrix}
$$
is positive semidefinite. Therefore, the conditions \eqref{eq:maxpotstar1}, \eqref{eq:maxpotstar2} and \eqref{eq:sumaj} imply that $\lambda_1^{\tilde \sigma}(\tilde S_d) = \nu(\tilde S_d) = 1$.
\end{proof}

Our next result is concerned with the suspension of disjoint unions of graphs.

\begin{proposition} Let $G = \{G_1,\dots,G_k\}$ be the disjoint union of $k$ finite graphs $G_1,\dots,G_k$. Let $\tilde G$ and $\tilde G_j$, $j=1,\dots,k$,  denote the suspension of the graphs $G$ and $G_j$, respectively. If each graph $G_j$ satisfies
\begin{equation} \label{eq:tGjplus1} 
\nu(\tilde G_j) = \nu(G_j) + 1, 
\end{equation}
then we also have
$$ \nu(\tilde G) = \min \{ \nu(\tilde G_1),\dots,\nu(\tilde G_k)) = \nu(G) + 1.
$$
\end{proposition}

\begin{proof} Similarly as in the proof of Theorem, this result is again a straightforward consequence of splitting of vertices and Lemma \ref{lem:splitvertex}, applied to suspensions.
\end{proof}

\begin{example} \label{ex:susp1} Let us consider the suspension $\tilde G$ of the graph $G$ in Example \ref{example:suspension} with $\nu(G) = \frac{5-\sqrt{17}}{2} \approx 0.438447187$. If we denote the suspension vertex by $x_5$, it can be checked that the magnetic potential $\hat \sigma: E^{or}(\tilde G) \to S^1$, which agrees with the maximal potential $\sigma_\pi: E^{or} \to S^1$ of $G$ in Figure \ref{fig:triangle_with_dangling_edge} and
$$ \hat \sigma(x_5,x_1) = -1, \quad \hat(\sigma(x_5,x_2) = \hat \sigma(x_5,x_3) = \hat \sigma(x_5,x_4) = +1 $$
satisfies
$$ \lambda_1^{\hat \sigma}(\tilde G) = \frac{7-\sqrt{17}}{2} = \nu(G) +1, $$
and is therefore a maximal potential, by \eqref{eq:nutildeGnuG} in Theorem \ref{thm:suspgraph}. Note that the eigenvalue $\lambda_1^{\hat \sigma}(\tilde G)$ of $\Delta^{\hat \sigma}$ has multiplicity $2$, while the eigenvalue $\lambda_1^{\sigma_\pi}(G) $ of $\Delta^{\sigma_\pi}$ had multiplicity $1$. Moreover, numerics suggest that $\hat \sigma$ is the unique maximal potential of $\tilde G$, up to gauge equivalence. 
\end{example}

\begin{example} 
\label{ex:susp2} Let us consider the suspension $\tilde G$ of the graph $G$ in Example \ref{ex:countexquest} with $\nu(G) = 1$ (see Figure \ref{fig:twotriangles}). If we denote the suspension vertex by $x_5$, it can be checked that the magnetic potential $\hat \sigma: E^{or}(\tilde G) \to S^1$, which agrees with the maximal potential $\sigma: E^{or}(G) \to S^1$ of $G$ in \eqref{eq:twotrianglespotmax} for $j=2$ and 
$$ \hat \sigma(x_5,x_1) = \hat \sigma(x_5,x_3)^{-1} = - \xi_6, \quad \hat \sigma(x_5,x_2) = \hat \sigma(x_5,x_4) = +1 $$
satisfies
$$ \lambda_1^{\hat \sigma}(\tilde G) = 2, $$
and is therefore a maximal potential, by \eqref{eq:nutildeGnuG} in Theorem \ref{thm:suspgraph}. Thus we have 
$$ \nu(\tilde G) = 2 = \nu(G) + 1. $$
Moreover, numerics suggest that this potential is the unique maximal potential of $\tilde G$, up to gauge equivalence.
\end{example}

All above examples indicate that the relation $\nu(\tilde G) = \nu(G)+1$ holds for a large class of graphs (see Problem \ref{prob:susp} in the final section). However, there are examples for which this relation does not hold. In the case of $k$ isolated vertices, the suspension $\tilde G$ is the star graph satisfying $\nu(\tilde G)=0$. An example with $\nu(\tilde G) < \nu(G) + 1$ and $\nu(\tilde G) > 0$ is the following.

\begin{remark}\label{remark:suspension}
    Example \ref{example:suspension} can be regarded as a suspension $\tilde G$ of a graph $G = \{K_2, x_1\}$. In this case, we know that the magneto-spectral heights are $\nu(\tilde G) \approx 0.438447187$ and $\nu(G) = 0$, respectively. Therefore, we have a strict inequality 
    $$\nu(\tilde G) < \nu(G) + 1. $$
\end{remark}

Generally, we have the following characterizations of graphs $G$ with $\nu(\tilde G) < 1$.

\begin{proposition}\label{prop:nususpsm1} We have $\nu(\tilde G) < 1$ if and only if $G$ contains an isolated vertex. Moreover, we have $0 < \nu(\tilde G) < 1$ if and only if $G$ contains both an isolated vertex and at least one edge.
\end{proposition}

\begin{proof} If $G$ contains an isolated vertex, we have $d_{\min}(\tilde G)=1$, and it follows from \eqref{eq:nuleaf} in Corollary \ref{cor:nudmin} that $\nu(\tilde G)<1$. Conversely, if $\nu(\tilde G) < 1$, the lower bound of \eqref{eq:nutildeGnuG} in Theorem \ref{thm:suspgraph} implies that $G$ must contain an isolated vertex. Since $\tilde G$ is always connected, $\nu(\tilde G)=0$ holds if and only if $\tilde G$ is a tree. The final statement follow now from the fact that $\tilde G$ can only be a tree if and only if $G$ does not contain an edge.
\end{proof}

\section{Relation to the spectral gap} \label{sec:specgap}

In this section, we discuss relations between the magneto-spectral height $\nu(G)$ and the spectrum of the standard Laplacian $\Delta_G$, in particular the extremal non-trivial eigenvalues $\lambda_2(G), \lambda_{\max}(\Delta_G)$. Regarding the largest eigenvalue, we have the following relation.

\begin{proposition} \label{prop:nudminlambdamax} For any finite graph $G$, the magneto-spectral height satisfies
$$ \nu(G) \ge 2 d_{\min}(G) - \lambda_{\max}(\Delta_G). $$
\end{proposition}

Note that for $d$-regular graphs, this proposition is only meaningful if no connected component of $G$ is bipartite since, otherwise, $\lambda_{\max}(\Delta_G)=2d$.

\begin{proof} Let $\bar \sigma: E^{or}(G) \to \{-1\}$ be the anti-balanced potential of $G$. Then we have
\begin{align*}
\nu(G) &\ge \lambda_1^{\bar \sigma}(G) = \lambda_1(D-A^{\bar \sigma}) = \lambda_1(2D-(D-A)) \\ &\ge 2 d_{\min}(G) - \lambda_{\max}(D-A) = 2 d_{\min}(G) - \lambda_{\max}(\Delta_G).
\end{align*}
\end{proof}

In the rest of this section, we discuss relations between $\nu(G)$ and the spectral gap $\lambda_2(G)$ of connected graphs $G$.
In the case of the normalized Laplacian $\widetilde \Delta$ with corresponding eigenvalues $\widetilde \lambda_j(G)$, Theorem 1.1 in \cite{Sa23} provides for connected, non-bipartite vertex transitive graphs $G$ of degree $d$ the relation
\begin{equation} \label{eq:nulambda2} 
2 - \widetilde \lambda_{\max}(\widetilde \Delta) \ge C \cdot \frac{\widetilde \lambda_2(G)}{d}, 
\end{equation}
with a universal constant $C > 0$. Since $\Delta = d \widetilde \Delta$ for these graphs, this, together with Proposition \ref{prop:nudminlambdamax}, leads to the estimate
$$ \nu(G) \ge \frac{C}{d} \cdot \lambda_2(G). $$
Limitations of this inequality are that we have no knowledge about the value of $C$, that the inequality is restricted to a special class of $d$-regular graphs. (Note that the whole spectrum of $\Delta$ is bounded above by $2d$.) A more desirable inequality would be an estimate of the form
$$ \nu(G) \ge c_d \cdot \lambda_2(G) $$
with concrete universal constants $c_d>0$ for $d \ge 2$, which holds for all connected graphs $G$ satisfying $d_{\min}(G) \ge d$. We require $d \ge 2$ to exclude trees $T$, for which we have $\nu(T) =0$. Proposition \ref{prop:nudiam} can be viewed as some kind of support of such an inequality, since there exists an equivalent lower diameter bound for the spectral gap:

\begin{theorem}[see {\cite[Theorem 3.5]{CLY14}}] Let $G=(V,E)$ be a connected graph satisfying $CD(0,\infty)$. Then we have
$$ \lambda_2(G) \ge \frac{C}{{\rm{diam}}^2(G)} $$
with a universal constant $C$.
\end{theorem}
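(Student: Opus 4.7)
The plan is to follow the same Bakry--\'Emery gradient-estimate strategy as in the proof of Proposition \ref{prop:nudiam}, but now applied directly to an eigenfunction of the unsigned Laplacian $\Delta$. The key simplification is that the role played there by an unbalanced signature is replaced by the mean-zero property of $\lambda_2$-eigenfunctions; otherwise the two arguments run in parallel.

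First I would fix a real eigenfunction $f : V \to \mathbb{R}$ of $\Delta$ corresponding to $\lambda_2(G)$. Since constants span the kernel of $\Delta$, $f$ is orthogonal to the constants, so $\sum_{x \in V} f(x) = 0$ and in particular $f$ takes both a strictly positive and a non-positive value. After rescaling, I may assume $\max_{z \in V} f(z) = f(x_0) = 1$ and $\min_{z \in V} f(z) \le 0$.

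Next I would introduce the auxiliary function
$$ g(x) = 2 \Gamma(f)(x) + \alpha \lambda_2 f(x)^2 $$
for a parameter $\alpha > 2$ to be optimized later. At a maximum vertex $v$ of $g$ one has $\Delta g(v) \le 0$, and the $CD(0,\infty)$-condition yields $2 \Gamma_2(f)(v) \ge 0$. Combining these with the eigenvalue equation $\Delta f = \lambda_2 f$ (exactly as in Step 1 of the proof of Proposition \ref{prop:nudiam}) produces the pointwise bound
$$ \Gamma(f)(v) \le \frac{\alpha}{\alpha - 2}\, \lambda_2\, f(v)^2, $$
and consequently, for every $x \in V$,
$$ 2\Gamma(f)(x) \le g(x) \le g(v) \le \frac{\alpha^2}{\alpha - 2}\, \lambda_2 \max_{z \in V} f(z)^2 = \frac{\alpha^2}{\alpha - 2}\, \lambda_2. $$

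Finally I would close the loop with a shortest-path argument. Let $x_1$ be a vertex with $f(x_1) \le 0$ and pick a path $x_0 = y_0 \sim y_1 \sim \cdots \sim y_s = x_1$ of length $s \le \mathrm{diam}(G)$. Using $|f(y_j) - f(y_{j+1})|^2 \le 2\Gamma(f)(y_j)$ and Cauchy--Schwarz,
$$ 1 \le \bigl(f(x_0) - f(x_1)\bigr)^2 \le s \sum_{j=0}^{s-1} |f(y_j) - f(y_{j+1})|^2 \le 2 s \sum_{j=0}^{s-1} \Gamma(f)(y_j) \le \frac{\alpha^2}{\alpha - 2}\, s^2\, \lambda_2. $$
Since $s \le \mathrm{diam}(G)$, optimizing $\frac{\alpha - 2}{\alpha^2}$ over $\alpha > 2$ (the maximum is attained at $\alpha = 4$) gives $\lambda_2(G) \ge \frac{1}{8\, \mathrm{diam}(G)^2}$, proving the theorem with $C = 1/8$.

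The routine calculations are the expansion of $\Delta g(v)$ and the $CD(0,\infty)$-application, both of which are inherited verbatim from the argument already carried out for the magnetic version. The only conceptual point where one must be careful is the replacement of the ``unbalancedness'' input by the orthogonality of $f$ to constants, which is what guarantees the existence of a vertex where $f \le 0$ and thus the nontrivial value of $|f(x_0) - f(x_1)|$ driving the path estimate.
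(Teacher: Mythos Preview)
The paper does not give its own proof of this theorem; it is merely quoted from \cite{CLY14}. Your strategy---specializing the Bakry--\'Emery gradient estimate of Proposition~\ref{prop:nudiam} to the trivial signature and replacing the unbalancedness input by orthogonality to constants---is exactly the right idea and yields a valid proof.

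There is one small gap to repair. You normalize so that $\max_{z} f(z) = 1$, and then later use $\max_z f(z)^2 = 1$ when bounding $g(v)$. These are not the same: nothing prevents $\min_z f(z) < -1$. The fix is standard (and is what the paper does in Proposition~\ref{prop:nudiam}): normalize so that $\max_z |f(z)| = 1$, replace $f$ by $-f$ if necessary to arrange $f(x_0)=1$, and then orthogonality to constants still gives a vertex with $f \le 0$ while now $|f|\le 1$ everywhere. A second, purely cosmetic slip: with the paper's sign convention $\Delta f(x)=\sum_{y\sim x}(f(x)-f(y))$, at a maximum of $g$ one has $\Delta g(v)\ge 0$, not $\le 0$; since you defer the actual computation to Step~1 of Proposition~\ref{prop:nudiam}, the outcome is unaffected.
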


The opposite inequality
\begin{equation} \label{eq:lambda2nu} 
\lambda_2(G) \ge C \cdot \nu(G)
\end{equation}
holds true for $2$-regular connected graphs, since we have  have
$\lambda_2(C_n) = 2 - 2\cos(2\pi/n)$, and therefore
$$ \frac{1}{4} \lambda_2(C_n) \le \nu(C_n) = 2 - 2\cos(\pi/n) \le \frac{1}{3} \lambda_2(C_n) $$
for all $n\ge 3$. However, 
the spectral gap and the magneto-spectral height have different behaviors for Cartesian products, namely
\begin{align*}
    \lambda_2(G \times H) &= \min\{ \lambda_2(G),\lambda_2(H) \}, \\
    \nu(G_1 \times G_2) &\ge \nu(G)+\nu(H),
\end{align*}
and \eqref{eq:lambda2nu} cannot hold with a universal constant $C>0$ for arbitrary connected graphs. Indeed, considering the hypercubes $Q^d=(K_2)^d$, we have
\begin{align*}
\lambda_2(Q^d) &= \lambda_2(K_2) = 2, \\
\nu(Q^d) &= d - \sqrt{d} \to \infty \quad \text{as $d \to \infty$.}
\end{align*}
Even for $d$-regular connected graphs $G$, there cannot be constants $c_d > 0$, only depending on $d$, such that
\begin{equation} \label{eq:lambda2cdnu} 
\lambda_2(G) \ge c_d \cdot \nu(G), 
\end{equation}
since we have the following counterexample.

\begin{example}[Cycles of almost complete graphs] \label{ex:cycalmcomplete} Let $n \ge 4$ be fixed and $K_n^-$ be the complete graph $K_n$ with one edge removed. This graph has precisely two vertices $x,y$ of degree $n-2$. For $\ell \ge 1$, we choose $2\ell$ copies $(K_n^-)^j$, $j=1,\dots,2\ell$, of these graphs with the corresponding special vertices $x_j,y_j$. We construct an $(n-1)$-regular graph $G_\ell$ by connecting these graphs cyclically along their special vertices, that is, we add edges between $y_j$ and $x_{j+1}$ for all $j = 1,\dots,2\ell$ (modulo $2\ell$). Since the additional edges $\{y_{\ell},x_{\ell+1}\}$ and $\{y_{2\ell},x_1\}$ form a cut-set of $G_{\ell}$, the Cheeger constant of these graphs satisfies
$$ h(G_\ell) \le \frac{2}{n \ell} \to 0 \quad \text{as $\ell \to \infty$,} $$
and we have, by Cheeger's Inequality,
$$ \lambda_2(G_{\ell}) \le 2h(G_{\ell}) \to 0 \quad \text{as $\ell \to \infty$.} $$
On the other hand, Proposition \ref{prop:MSS} yields
$$ \nu(G_\ell) \ge (n-1) - 2 \sqrt{n-2} > 0, $$
contradicting to \eqref{eq:lambda2cdnu}.
\end{example}

\begin{remark} Connected $3$-regular graphs of fixed vertex cardinality $n$ and minimal spectral gap were characterized in \cite[Theorem 1]{BGI07}, and their spectral gaps have the asymptotics $(1+o(1))\frac{2\pi^2}{n^2}$, as $n \to \infty$ (see \cite[Theorem 2.2]{AGI21}. 
\end{remark}

\section{Some open problems} \label{sec:openprobs}

We end this paper with a collection of questions about the magneto-spectral height.

\begin{problem}[Multiplicity Problem] Is there a characterization of connected graphs for which there exists a maximal magnetic potential $\hat \sigma$ such that $\lambda_1^{\hat\sigma}(G)$ has higher multiplicity? 
Trees are not in this class of graphs and also not $K_3$ with an attached dangling edge (Example \ref{example:suspension}). However, cycles, wheel graphs and complete graphs are in this class of graphs. This question is somehow in the same spirit as the \emph{Colin de Verdi{\`e}re Invariant} of graphs. More generally, one might investigate the $\lambda_1^\sigma$-surface of a finite graph $G$ over the space of all non-equivalent magnetic potentials and understand its critical points by using Morse Theory in the same spirit as in \cite{B13}. 
\end{problem}

\begin{problem}[Universal Lower Bound Problem] For $d$-regular graphs with $d \ge 3$, we have the lower Ramanujan bound $\nu(G) \ge d-2\sqrt{d-1}$ and the magneto-spectral height is non-decreasing by adding edges between existing vertices. We also know that we have for all $d \ge 7$,
\begin{equation} \label{eq:Cdlowbd}
0 < d-2\sqrt{2}\sqrt{d-1} \le \nu_d \le d-2\sqrt{d-1},
\end{equation}
where $\nu_d$ is the infimum of magneto-spectral heights of all finite graphs $G$ satisfying $d_{\min}(G) \ge d$  (Corollary \ref{cor:Cddef}). 
However, we do not know whether $\nu_3 > 0$, that is, there exists also a universal lower bound for all graphs with $d_{\min}(G) \ge 3$. It is also not clear whether \eqref{eq:Cdlowbd} can be improved to
$$ \nu_d = d-2\sqrt{d-1}, $$
since a graph with $d_{\min}(G) \ge d$ cannot be usually reduced to a $d$-regular graph by just removing edges between existing vertices.
\end{problem}

\begin{problem}[Spectral Gap Problem] Do there exist universal constants $c_d>0$ for $d\ge 2$, such that $\lambda_2(G) \le c_d \nu(G)$ for all connected graphs $G$ with $d_{\min}(G) \ge d$?
\end{problem}

\begin{problem}[Suspension Problem]
    \label{prob:susp}
    We know that the relation
    $$\nu(\tilde G) = \nu(G) + 1$$
    between a graph $G$ and its suspension $\tilde G$ holds true for many graphs. The only counterexamples we are aware of, are graphs with isolated vertices. Is it true that these are the only counterexamples for this relation? 
\end{problem}

\begin{problem}
[Algebraicity Problem] Does there exist a graph $G$ for which
$\nu(G)$ is transcendental, or is $\nu(G)$ for all graphs $G$ an algebraic number? 
\end{problem}

\begin{problem}[Regular Bipartite Graph Problem] We know that the magneto-spectral height of $d$-regular bipartite graphs is bounded above by $d-\sqrt{d}$ (see Corollary \ref{cor:Kddmagnetospecheight}). This upper bound is attained for hypercubes $Q^d$, complete bipartite graphs $K_{d,d}$ and the examples given in Remark \ref{rem:UWnd}. We know that this class of graphs is closed under taking Cartesian products as well as the $\odot$-product introduced in Remark \ref{rem:UWnd}. Note that it follows from \eqref{eq:sigmasigma} that a necessary condition is that any pair of distance-2 vertices must have at least $2$ common neighbours. Therefore, even cycles $C_d$ with $d \ge 6$ do not be belong to this class, and nor does any regular bipartite graph with girth $\ge 6$.

The complete classification of $d$-regular bipartite graphs $G$ with $\nu(G) = d-\sqrt{d}$ seems to be a hard but interesting problem, since it would agree with the classification of all unit weighing matrices. At present there are not too many papers in the literature about this particular class of matrices generalizing Hadamard matrices.

Another interesting question is whether all graphs in this class have (standard) non-negative Bakry-\'Emery or Ollivier Ricci curvature. The magnetic Bakry-\'Emery curvature of any maximal potential is non-negative, by Theorem \ref{thm:Chunyang}. 
\end{problem}

\begin{problem}[Petersen Graph Problem] The $\Delta$-spectrum of the Petersen graph $G$ is
\begin{center}
\begin{tabular}{cccccccccc}
$\lambda_1$ & $\lambda_2$ & $\lambda_3$ & $\lambda_4$ & $\lambda_5$ & $\lambda_6$ & $\lambda_7$ & $\lambda_8$ & $\lambda_9$ & $\lambda_{10}$\\
\hline
$0$ & $2$ & $2$ & $2$ & $2$ & $2$ & $5$ & $5$ & $5$ & $5$
 \end{tabular}
\end{center}
Since this graph is regular with $d=3$, $\lambda_1^{\bar \sigma}(G)$ for the antibalanced signature $\bar \sigma \equiv -1$ satisfies 
$$
\lambda_1^{\bar \sigma}(G) = 2d-\lambda_{10} = 1. $$
Numerical experiments suggest that $\nu(G)=1$, and that $\bar \sigma$ is the unique maximal potential, up to gauge equivalence. Can one give a rigorous proof of these statements? Proposition \ref{prop:alonbopp} provides
the ``almost optimal'' upper bound
$$ \nu(G) \le 3-2\sqrt{2} + \frac{2\sqrt{2}-1}{2} = 3 -2\sqrt{2} \approx 1.0858. $$
Moreover, it would be interesting to find further examples of connected graphs $G$ with $\nu(G)=1$, which are not suspensions of trees. 
\end{problem}

\begin{problem}[Cospectrality Problem] Recall that there is a cospectral graph $\widehat G$ to the wheel graph $W_6$ (see Figure \ref{fig:cospectral}), and that these two graphs have different magneto-spectral heights. Moreover, there are cospectral trees $T_1,T_2$ with $\nu(T_1)=\nu(T_2)=0$. Can one find a pair of cospectral finite connected graphs $G_1,G_2$, whose magneto-spectral heights are non-zero and coincide? If no such pair exists, a non-zero magneto-spectral height together with the spectrum would determine a graph, up to isomorphism.   
\end{problem}

\begin{problem} [Asymptotic Triviality Problem]
Does there exist criteria of graph sequences $G_n$ satisfying $\nu(G_n) \to 0$? Obviously, this is the case for sequences which end eventually in forests or for increasing cycles.
\end{problem}

\begin{problem}[Lift Problem] Does there exist an example of a finite lift $\widehat G \to G$ such that $\nu(\widehat G) > \nu(G)$? While eigenvalues of magnetic Laplacians are preserved under lifts of the magnetic potential and new eigenvalues may occur, which would suggest $\nu(\widehat G) \le \nu(G)$, a lift $\widehat G$ allows also for magnetic potentials which are not lifts of potentials of $G$, which may strictly increase the magneto-spectral height. However, we are not aware of any such example. 
\end{problem}

\begin{problem}[Average Magneto-spectral Height Problem] Instead of taking the supremum over all magnetic potentials in the definition \eqref{eq:magspecheight} of $\nu(G)$, one could also take the average where all magnetic potentials are chosen uniformly and independently at random. Does there exist a uniform lower bound of this average $\nu_{\rm{av}}(G)$ in terms of $\nu(G)$?  
\end{problem}

The final problem is about the magneto-spectral height of Riemannian manifolds, as introduced in Appendix A below.  

\begin{problem}[Small Eigenvalues Problem For Hyperbolic Surfaces]
Do we have for any sequence $S_n$ of compact hyperbolic surfaces with ${\rm{inj}}(S_n) \to \infty$ (where ${\rm{inj}}(S)$ is the injectivity radius of $S$) that $\limsup \nu(S_n) \le 1/4$? (This would be a smooth counterpart of the Alon-Boppana type result in Subsection \ref{subsec:ramanujan}.) Moreover, is it true that we have $\nu(S) \ge 1/4$ for any compact hyperbolic surface? The bound $1/4$ is the bottom of the spectrum of the hyperbolic plane, which is the universal cover of any hyperbolic surface. A related question is whether all compact (or finite volume) hyperbolic surfaces admit finite covers with no new eigenvalues in the interval $[0,1/4)$. For significant recent progress in this question, see \cite{HM23} and the references therein. Moreover, all these questions have natural extensions beyond constant curvature and also beyond 2-dimensional surfaces.
\end{problem}

{\bf{Acknowledgement:}} 
C. Hu, S. Kamtue and Sh. Liu are grateful for the hospitality of Durham University and N. Peyerimhoff is grateful for the hospitality of USTC and Chulalongkorn University. We also like to thank Wenbo Li and Joe Thomas for useful discussions regarding maximal abelian coverings and Ramanujan bounds. C. Hu is supported by the Outstanding PhD Students Overseas Study Support Program of the University of Science and Technology of China. B. Hua is supported by the National Natural Science Foundation of China No. 12371056. S. Kamtue is supported by grants for development of new faculty staff, Ratchadaphiseksomphot Fund, Chulalongkorn University. Sh. Liu is supported by the National Natural Science Foundation of China No. 12431004.

\appendix

\section{Magneto-spectral heights for Riemannian manifolds}

The magneto-spectral height has an analogue in the continuous setting of Riemannian manifolds. To explain this analogy, we first need to explain some further concepts in the setting of a connected graph $G=(V,E)$: Every magnetic potential $\sigma: E^{or}(G) \to S^1$ gives rise to a representation 
\begin{equation} \label{eq:reprhosigma} 
\rho_\sigma: \pi_1(G,x_0) \to S^1, 
\end{equation}
where $\pi_1(G,x_0)$ is the fundamental group of $G$ with base vertex $x_0$. Recall that $\pi_1(G,x_0)$ is the set of all equivalence classes of closed walks in $G$ starting and ending in $x_0$. This set has a natural group structure by concatenation and reversion, and the representation \eqref{eq:reprhosigma} is defined as follows:
$$ \rho_\sigma(\gamma) = \sigma(x_0,x_1)\sigma(x_1,x_2)\cdots\sigma(x_{m-1},x_m), $$
where $\gamma$ is the equivalence class (modulo backtracking) of the closed walk
$$ x_0 \sim x_1 \sim x_2 \sim \cdots \sim x_{m-1} \sim x_m=x_0. $$
Moreover, the universal cover of $G$ is a tree $T=(\widetilde V, \widetilde E)$ with covering map $\pi: T \to G$, on which the fundamental group $\Gamma = \pi_1(G,x_0)$ acts by deck transformations. Vertices in $T$ can be identified with equivalence classes of walks in $G$ starting at $x_0$. Let $\tilde x_0 \in \pi^{-1}(x_0)$ be the vertex in $\tilde V$ corresponding to the ``empty'' walk.

Let $\rho: \Gamma \to S^1$ an arbitrary representation.
We can identify functions $f \in C(V,\C)$ with $\rho$-equivariant functions $\vec{f} \in C(\tilde V,\C)$ as follows:
$$ \vec{f}(\tilde x_0) = f(x_0) $$
and
$$
\vec{f}(\tilde y)=\sigma(\gamma_{\tilde y})f(x_0),
$$
where $\gamma_{\tilde y}$ is a walk in the equivalence class corresponding to $\tilde y \in T$. In particular, we have
$$
\vec{f}(\gamma x) =\rho_\sigma(\gamma) \vec{f}(x) \quad \text{for all $x \in \tilde V$ and $\gamma \in \Gamma$.} $$
The set of all $\rho$-equivariant functions on $T$ is denoted by
$$ C_\rho(\widetilde V,\C) = \{ g: \widetilde V \to \C: g(\gamma x) = \rho(\gamma)g(x) \quad \text{for all $\gamma \in \Gamma$ and $x \in \widetilde V$} \}, $$
and the bijective map $f \mapsto \vec{f}$ is henceforth denoted by $\mathcal{F}_\rho: C(V,\C) \to C_\rho(\widetilde V, \C)$. Then we have the following commutative diagram relating the magnetic Laplacian $\Delta^\sigma: C(V,\C) \to C(V,\C)$ and the standard Laplacian $\Delta_T: C(\tilde V,\C) \to C(\tilde V,\C)$ on the tree $T$ with $\rho=\rho_\sigma$:
$$\begin{CD}
   C(V,\C) @>\Delta^\sigma>> C(V,\C) \\
   @VV{\mathcal{F}_\rho}V          @VV{\mathcal{F}_\rho}V \\
   C_\rho(\widetilde V,\C) @>>\Delta_T> C_\rho(\widetilde V,\C)
\end{CD}
$$
In the special case that the representation $\rho=\rho_\sigma$ factors through the finite subgroup $S_k$ of powers of the $k$-th root of unity $\xi_k=e^{2\pi i/k}$, that is, $\rho: \pi_1(G,x_0) \to S_k$, the diagram descends to a corresponding commutative diagram between $\Delta^\sigma$ on $G$ and the standard Laplacian $\Delta_{\widehat G}$ on the cyclic $k$-lift $\widehat G=(\widehat V,\widehat E)$ corresponding to $\sigma$:
$$\begin{CD}
   C(V,\C) @>\Delta^\sigma>> C(V,\C) \\
   @VV{\mathcal{F}_\rho}V          @VV{\mathcal{F}_\rho}V \\
   C_\rho(\widehat V,\C) @>>\Delta_{\widehat G}> C_\rho(\widehat V,\C))
\end{CD}
$$
where $\mathcal{F}_\rho$ maps $f \in C(V,\C)$ now to a $\rho$-equivariant function $\vec{f}$ on $\widehat V$.

The spectrum of $\Delta_{\widehat G}$ is then the multiset disjoint union of the spectra of the magnetic Laplacians $\Delta^{\sigma^j}$ on $G$ for $j=0,1,\dots,k-1$.

\medskip

We now discuss the analogous setting in the case of a compact connected Riemannian manifold $(M,g)$ with base point $x_0 \in M$. Let $\pi: \widetilde M \to M$ be the universal covering map and $\tilde x_0 \in \pi^{-1}(x_0) \subset \tilde M$. Then the fundamental group $\Gamma = \pi_1(M,x_0)$ acts by deck transformations on the universal cover $\widetilde M$. Any representation $\rho: \pi_1(M,x_0) \to S^1$ gives rise to a flat complex line bundle $L_\rho \to M$ and to a (one-dimensional) \emph{twisted Laplacian} $\Delta^\rho: C^\infty(M,L_\rho) \to C^\infty(M,L_\rho)$ with the following properties: 
\begin{itemize}
\item[(a)] There exists a one-one correspondence $\mathcal{F}_\rho$ between sections $f \in C^\infty(M,L_\rho)$ and $\rho$-equivariant functions $\vec{f} = \mathcal{F}_\rho f$ in $C^\infty_\rho(\widetilde M,\C)$, that is, $\vec{f}$ satisfies 
$$ \vec{f}(\gamma x) = \rho(\gamma)\vec{f}(x) \quad \text{for all $x \in \widetilde M$ and $\gamma \in \Gamma$.} $$
\item[(b)] We have the following commutative diagram:
$$\begin{CD}
   C^\infty(M,L_\rho) @>\Delta^\rho>> C^\infty(M,L_\rho) \\
   @VV{\mathcal{F}_\rho}V          @VV{\mathcal{F}_\rho}V \\
   C^\infty_\rho(\widetilde M,\C) @>>\Delta_{\widetilde M}> C^\infty_\rho(\widetilde M,\C)
\end{CD}
$$
where $\Delta_{\widetilde M}$ is the standard Laplacian on $\widetilde M$.
\end{itemize}
If $\rho$ factors through the finite subgroup $S_k$, there exists a finite $k$-cover $\widehat M$ of $M$ such that the spectrum of the standard Laplacian $\Delta_{\widehat M}$ agrees again with the multiset disjoint union of the spectra of the twisted Laplacians $\Delta^{\rho^j}: C^\infty(M,L_{\rho^j}) \to C^\infty(M,L_{\rho^j})$ on $M$ for $j=0,1,\dots,k-1$. For more information about (higher dimensional) twisted Laplacians see, e.g., \cite{Su89} or \cite[Chapter 3]{CP23}. 

Let us briefly discuss the relation between one-dimensional twisted Laplacians and \emph{magnetic Laplacians} $\Delta^\alpha: C^\infty(M,\C) \to C^\infty(M,\C)$ on a compact connected Riemannian manifold $(M,g)$ associated to (real-valued) magnetic potentials $\alpha \in \Omega^1(M)$ (see \cite[Definition 1.1]{G24} for a real-valued version of twisted Laplacians). The $\alpha$-magnetic Laplacian is given by (see, e.g., \cite[(1.1)]{Shi87})
\begin{equation} \label{eq:Deltaalpha} 
\Delta^\alpha f = \Delta_M f - 2 i \, \langle {\rm{grad}} f,\alpha^\# \rangle + \left( i \, d^* \alpha + \Vert \alpha^\# \Vert^2 \right) f, 
\end{equation}
where $\alpha^\#$ is the vector field determined by the relation $\alpha(Y)= \langle Y,\alpha^\# \rangle$ and $d^*: \Omega^1(M) \to C^\infty(M)$ is the formal adjoint of the exterior derivative $d: C^\infty(M) \to \Omega^1(M)$. The operator $\Delta^\alpha$ on a compact Riemannian manifold is self-adjoint with non-negative discrete spectrum, and it can also be viewed as the operator $\Delta^{\tilde \alpha}$, acting on $\Gamma$-periodic functions $\tilde f \in C_\Gamma(\tilde M,\C)$ with $\tilde \alpha = \pi^* \alpha \in \Omega^1(\widetilde M)$. 

Any \emph{harmonic} magnetic potential $\alpha \in \Omega^1(M)$ (that is, $d \alpha = d^* \alpha = 0$) gives rise to a representation $\rho_\alpha: \pi_1(M,x_0) \to S^1$ as follows:
$$ \rho_\alpha(\gamma) = \exp(-i \, \int_{\tilde x_0}^{\gamma \tilde x_0} \tilde \alpha), $$
where the path integral $\int_{\tilde x_0}^{\gamma \tilde x_0} \tilde \alpha$ is independent on the path by Stokes and the fact that $d \tilde \alpha = 0$. Moreover, for any harmonic magnetic potential $\alpha$ and associated representation $\rho=\rho_\alpha$, the operators $\Delta^{\tilde \alpha}: C_\Gamma^\infty(\widetilde M,\C) \to C_\Gamma^\infty(\widetilde M,\C)$ and $\Delta^\rho: C_\rho^\infty(\widetilde M,\C) \to C_\rho^\infty(\widetilde M,\C)$ are unitarily equivalent via the gauge function $\tau \in C^\infty(\widetilde M, S^1)$, given by
$$ \tau(\tilde x) = \exp\left( -i \int_{\tilde x_0}^{\tilde x} \tilde \alpha \right). $$
This follows from ${\rm{grad}}\, \tau = -i \tilde \alpha^\#$, $\Delta_{\widetilde M}\, \tau = \Vert \tilde \alpha^\# \Vert^2$, $d^* \tilde \alpha=0$ and 
$$ \tau^{-1} \Delta_{\widetilde M} (\tau \tilde f) = \Delta_{\widetilde M} \tilde f - 2 i \langle {\rm{grad}} \tilde f, \tilde \alpha^\# \rangle + \Vert \tilde \alpha^\# \Vert^2 \tilde f = \Delta^{\tilde \alpha} \tilde f, $$
for any $\Gamma$-periodic function $\tilde f$. This shows $\tau^{-1} \Delta^\rho \tau = \Delta^{\tilde \alpha}$ and, therefore, every magnetic Laplacian with a harmonic potential is a one-dimensional twisted Laplacian. However, the converse is not true: There are no harmonic $1$-forms on the real projective plane $\mathbb{R} P^2$, but there is a non-constant representation $\rho: \pi_1(\mathbb{R} P^2,x_0) \to \{\pm 1\}$. 

Our magneto-spectral invariant in the  manifold setting is defined as follows:

\begin{definition}[Magneto-spectral height for manifolds]
\label{def:msheightformanifold}
Let $(M,g)$ be a compact Riemannian manifolds. The \emph{magneto-spectral height} of $M$ is
$$ \nu(M) = \sup_\rho \lambda_1^\rho(M), $$
where $\rho: \pi_1(M,x_0) \to S^1$ runs through all $S^1$-representations and $\lambda_1^\rho(M)$ is the smallest eigenvalue of the twisted Laplacian $\Delta^\rho$. 
\end{definition}

It follows directly from its definition, that we have $\nu(M) = 0$ for all simply connected manifolds $M$. This is the analogue of the property $\nu(T) = 0$ for finite trees. Let us finish with the following remark explaining the reason behind our Definition \ref{def:msheightformanifold}.

\begin{remark} Another potential continuous counterpart of the magneto-spectral height for compact Riemannian manifolds $(M,g)$ could be 
\begin{equation} \label{eq:nualtcont}
\sup_{\alpha \in \Omega^1(M)} \lambda_1^\alpha(M), 
\end{equation}
where $\alpha$ runs through all magnetic potentials on $M$ and $\lambda_1^\alpha(M)$ is the smallest eigenvalue of the magnetic Laplacian $\Delta^\alpha$, given in \eqref{eq:Deltaalpha}. Readers may ask why we did not choose this alternative definition.

Since, by Hodge Theory, every $\alpha \in \Omega^1(M)$ can be uniquely written as the sum $\alpha_0 + d f + d^* \omega$ with harmonic $\alpha_0 \in \Omega^1(M)$, $f \in C^\infty(M)$ and $\omega \in \Omega^2(M)$, the term $d f$ can be gauged away and \eqref{eq:nualtcont} agrees therefore with
$$ 
\sup_{\alpha_1} \lambda_1^{\alpha_1}(M),
$$
where $\alpha_1$ runs through all magnetic potentials on $M$ with $d^* \alpha_1 = 0$. However, any term $d^* \omega \neq 0$ in the Hodge decomposition of $\alpha$ cannot be gauged away, which implies that we cannot restrict the supremum to run only over harmonic magnetic potentials. 

Due to this fact, this alternative definition has the following significant disadvantage: It is no longer true that the expression \eqref{eq:nualtcont} vanishes for simply connected compact Riemannian manifolds. Even worse, the expression \eqref{eq:nualtcont} is infinite for the simply connected $3$-sphere (see \cite[Section 5.2]{CGHP25}). 
\end{remark}


\begin{thebibliography}{9}

\bibitem[AGI21]{AGI21} M. Abdi, E. Ghorbani and W. Imrich, \emph{Regular graphs with minimum spectral gap},
European J. Combin. {\bf{95}} (2021), Paper No. 103328, 18.

\bibitem[ABG25]{ABG25} L. Alon, G. Berkolaiko and M. Goresky, \emph{Smooth critical points of eigenvalues on the torus of magnetic perturbations of graphs}, arXiv:2505.1721, 2025

\bibitem[AL20]{AL20} F. Atay and Sh. Liu,
\emph{Cheeger constants, structural balance, and spectral clustering analysis for signed graphs},
Discrete Math. {\bf{343}}(1) (2020),
111616, 26.

\bibitem[B85]{B85} D. Bakry, \emph{Functional inequalities for Markov semigroups}, in Probability measures on groups:
recent directions and trends, 91--147, Tata Inst. Fund. Res., Mumbai, 2006.

\bibitem[BE85]{BE85} D. Bakry and M. \'Emery, \emph{Diffusions hypercontractives}, {S\'{e}minaire de probabilit\'{e}s, {XIX}, 1983/84}, 177-206, Lecture Notes in Math. 1123, Springer, Berlin, 1985.

\bibitem[B13]{B13} G. Berkolaiko, \emph{Nodal count of graph eigenfunctions via magnetic perturbation}, Analysis \& PDE {\bf{6}}(5) (2013), 1213-1233.

\bibitem[BKR13]{BKR13} D. Best, H. Kharaghani and H. Ramp, \emph{On unit weighing matrices with small weight}, Discrete Math. {\bf{313}}(7) (2013), 855--864.

 
\bibitem[BGKLM20]{BGKLM20} M. Bonnefont, S. Gol\'enia, M. Keller, Sh. Liu and F. M\"unch,
\emph{Magnetic-sparseness and {S}chr\"odinger operators on graphs}, Ann. Henri Poincar\'e {\bf{21}}(5) (2020), 1489--1516.

\bibitem[BCLMP18]{BCLMP18} D. P. Bourne, D. Cushing, S. Liu, F. M\"{u}nch and N. Peyerimhoff, \emph{Ollivier-{R}icci idleness functions of graphs}, SIAM J. Discrete Math. {\bf{32}}(2) (2018), 1408--1424.

\bibitem[BGI07]{BGI07} C. Brand, B. Guiduli and W. Imrich, \emph{The characterization of cubic graphs with minimal eigenvalue gap}, Croat. Chem. Acta {\bf{80}} (2007), 193–201.

\bibitem[BGG98]{BGG98} R. 
Brooks, R. Gornet and W. H. Gustafson, \emph{Mutually isospectral {R}iemann surfaces}, Adv. Math. {\bf{138}}(2) (1998), 306--322.

\bibitem[Bu10]{Bu10} P. Buser, \emph{Geometry and spectra of compact {R}iemann surfaces}, Modern Birkh\"{a}user Classics, Reprint of the 1992 edition, Birkh\"{a}user Boston, Ltd., Boston, MA, 2010.

\bibitem[CGHP25]{CGHP25} 
T. Chakradhar, K. Gittins, G. Habib and N. Peyerimhoff, \emph{A note on the magnetic Steklov operator on functions}, Mathematika {\bf{71}}(4) (2025), Paper No. e70037, 35.

\bibitem[ChV17]{ChV17} K. Chandrasekaran and A. Velingker, \emph{Shift lifts preserving {R}amanujan property}, Linear Algebra Appl. {\bf{529}} (2017), 199--214.

\bibitem[ChL24]{ChL24} W. Chen and Sh. Liu, \emph{Curvature, diameter and signs of graphs},
J. Geom. Anal. {\bf{34}}(11) (2024), Paper No. 326.

\bibitem[CLY14]{CLY14} F. Chung, Y. Lin, and S.-T. Yau, \emph{Harnack inequalities for graphs with non-negative {R}icci
curvature}, J. Math. Anal. Appl. {\bf{415}}(1) (2014), 25--32.

\bibitem[CdV13]{CdV13} Y. Colin de Verdière, \emph{Magnetic interpretation of the nodal defect on graphs}, Analysis \& PDE {\bf{6}}(5) (2013), 1235-1242.

\bibitem[CP23]{CP23}
G. Cornelissen and N. Peyerimhoff, \emph{Twisted isospectrality, homological wideness, and isometry---a 
sample of algebraic methods in isospectrality}, SpringerBriefs in Mathematics, Springer, Cham, 2023.

\bibitem[CLP17]{CLP17} D. Cushing, Sh. Liu and N. Peyerimhoff, 
\emph{Bakry-{\'E}mery curvature functions of graphs}, arXiv:1606.01496, 2016.

\bibitem[DM06]{DM06}
J. Dodziuk and V. Mathai, \emph{Kato's inequality and asymptotic spectral properties for discrete magnetic {L}aplacians}, in The ubiquitous heat kernel, Contemp. Math. {\bf{398}}, Amer. Math. Soc., 69--81, 2006.

\bibitem[FCLP18]{FCLP18}
J. S. Fabila-Carrasco, F. Lled\'o and O. Post,
\emph{Spectral gaps and discrete magnetic {L}aplacians}, \emph{Linear Algebra Appl.} {\bf{547}} (2018), 183--216.

\bibitem[GVK22]{GVK22} J. Garza-Vargas and A. Kulkarni, \emph{Spectra of Infinite Graphs via Freeness with Amalgamation}, Canad. J. Math. {\bf{75}}(5) (2023), 1633-1684.

\bibitem[G14]{G14}
S. Gol\'enia, \emph{Hardy inequality and eigenvalue asymptotic for discrete Laplacians}, J. Funct. Anal. {\bf{266}}(5) (2014), 2662-2688.

\bibitem[G24]{G24}
Y. Gong, \emph{Spectral distribution of twisted {L}aplacian on typical hyperbolic surfaces of high genus}, Comm. Math. Phys. {\bf{405}}(7) (2024), Paper No. 158, 41.

\bibitem[Ji19]{Ji19} Z. Jiang, \emph{On spectral radii of unraveled balls}, J. Combin. Theory Ser. B {\bf{136}} (2019), 72--80.

\bibitem[HPS18]{HPS18} Ch. Hall, D. Puder and W. F. Sawin, \emph{Ramanujan coverings of graphs}, Adv. Math. {\bf{323}} (2018), 367--410.

\bibitem[HH99]{HH99} L. Halbeisen and N. Hungerb\"{u}hler, \emph{Generation of isospectral graphs}, J. Graph Theory {\bf{31}}(3) (1999), 255--265.

\bibitem[H57]{H57} F. Harary, \emph{On the notion of balance of a signed graph}, Michigan Math. J. {\bf{2}}(2) (1953), 143--146. 

\bibitem[HM23]{HM23} W. Hide and M. Magee, \emph{Near optimal spectral gaps for hyperbolic surfaces}, Ann. of Math. (2) {\bf{198}}(2) (2023), 791--824.

\bibitem[HOS01]{HOS01}H. Hosoya, K. Ohta and M. Satomi, \emph{Topological twin graphs II. Isospectral polyhedral graphs with nine and ten vertices}, Match {\bf{44}}(2001), 183-200.

\bibitem[HSh01]{HSh01}
Y. Higuchi and T. Shirai, \emph{Weak {B}loch property for discrete magnetic {S}chr\"odinger operators}, Nagoya Math. J. {\bf{161}} (2001), 127--154.

\bibitem[HL22]{HL22} C. Hu and Sh. Liu, Discrete Bakry-\'Emery curvature tensors and matrices of connection graphs, arXiv:2209.10762, 2022.

\bibitem[KS23]{KS23} E. Korotyaev and N. Saburova, \emph{Trace formulas for magnetic {S}chr\"odinger operators on periodic graphs and their applications}, Linear Algebra Appl. {\bf{676}} (2023), 395--440.

\bibitem[LLPP15]{LLPP15} C. Lange, Sh. Liu, N. Peyerimhoff and O. Post, Olaf, \emph{Frustration index and {C}heeger inequalities for discrete and continuous magnetic {L}aplacians}, Calc. Var. Partial Differential Equations {\bf{54}}(4) (2015), 4165--4196.

\bibitem[LMST25]{LMST25} W. Li, M. Magee, M. Sabri and J. Thomas, \emph{Eigenvalues of Maximal Abelian Covers}, arXiv:2508.17332, 2025.

\bibitem[LMP19]{LMP19} Sh. Liu, F. M\"{u}nch and N. Peyerimhoff, \emph{Curvature and higher order {B}user inequalities for the graph connection {L}aplacian}, SIAM J. Discrete Math. {\bf{33}}(1) (2019), 257--305. 

\bibitem[LMP24]{LMP24} Sh. Liu, F. M\"{u}nch and N. Peyerimhoff, \emph{Rigidity properties of the hypercube via {B}akry-\'{E}mery curvature}, Math. Ann. {\bf{388}}(2) (2024), 1225--1259.

\bibitem[LPV20]{LPV20} Sh. Liu, N. Peyerimhoff and A. Vdovina, \emph{Signatures, lifts, and eigenvalues of graphs}, in {Discrete and continuous models in the theory of networks}, Oper. Theory Adv. Appl. {\bf{281}}, Birkh\"auser/Springer, Cham (2020), 255--269.
      
\bibitem[MSS15]{MSS15} A. W. Marcus, D. A. Spielman and N. Srivastava, \emph{Interlacing families {I}: {B}ipartite {R}amanujan graphs of all degrees}, Ann. of Math. (2) {\bf{182}}(1) (2015), 307--325.

\bibitem[McK77]{McK77} B. D. McKay, \emph{On the spectral characterisation of trees}, Ars Combin. {\bf{3}} (1977), 219--232.

\bibitem[N91]{N91} A. Nilli, \emph{On the second eigenvalue of a graph}, Discrete Math. {\bf{91}}(2) (1991), 207--210.

\bibitem[P-V]{P-V22} G. Pineda-Villavicencio, \emph{Cycle space of graphs of polytopes}, arXiv:2208.02579, 2022.

\bibitem[Sa23]{Sa23} P.J. Saha. \emph{On the spectral gap of Cayley graphs}, arXiv:2312.06604, 2023.

\bibitem[Shi87]{Shi87} I. Shigekawa, \emph{Eigenvalue problems for the {S}chr\"odinger operator with the magnetic field on a compact {R}iemannian manifold}, \emph{J. Funct. Anal.} {\bf{75}}(1) (1987), 92--127.

\bibitem[Shu94]{Shu94} M. A. Shubin, \emph{Discrete magnetic {L}aplacian}, Comm. Math. Phys. {\bf{164}}(2) (1994), 259--275.

\bibitem[Su89]{Su89} T. Sunada, \emph{Unitary representations of fundamental groups and the spectrum of twisted {L}aplacians}, Topology {\bf{28}}(2) (1989), 125--132.

\bibitem[Su94]{Su94}
T. Sunada, \emph{A discrete analogue of periodic magnetic {S}chr\"odinger operators}, in Geometry of the spectrum ({S}eattle, {WA}, 1993), Contemp. Math. {\bf{173}}, Amer. Math. Soc., Providence, RI, 283--299, 1994.

\bibitem[Z82]{Z82} T. Zaslavsky, \emph{Signed graphs}, Discrete Appl. Math. {\bf{4}}(1) (1982), 47--74.

\bibitem[Z98]{Z98} T. Zaslavsky, \emph{A mathematical bibliography of signed and gain graphs and
allied areas}, Electron. J. Combin. {\bf{5}} (1998), Dynamic Surveys 8, 124.

\bibitem[Z13]{Z13} T. Zaslavsky, \emph{Signed Graphs and Geometry}, arXiv:1303.2770, 2013.

\bibitem[ZLY09]{ZLY09} Y. Zhang, X. Liu and X.Yong, \emph{Which wheel graphs are determined by their {L}aplacian spectra?}, Comput. Math. Appl. {\bf{58}}(10) (2009), 1887--1890.

\end{thebibliography}
\end{document}